\numberwithin{equation}{section}
\newtheorem{thm}{Theorem}[section]
\newtheorem{prop}[thm]{Proposition}
\newtheorem{conj}[thm]{Conjecture}
\newtheorem{question}[thm]{Question}
\newtheorem{lem}[thm]{Lemma}
\newtheorem{cor}[thm]{Corollary}
\theoremstyle{definition}
\newtheorem{rmk}[thm]{Remark}
\newtheorem{ex}[thm]{Example}
\def\s{{\mathfrak s}}
\def\W{{\mathcal W}}
\def\y{{\bf y}}
\def\w{{\bf w}}
\def\z{{\bf z}}
\def\Z{{\mathbb Z}}
\def\aW{\hat{S}_n}
\begin{document}
\title{Discrete solitons in infinite reduced words}
\author{Max Glick \and Pavlo Pylyavskyy}
\address{Department of Mathematics, University of Minnesota,
Minneapolis, MN 55455, USA}
\thanks{M. G. was partially supported by NSF grant DMS-1303482.  P. P. was partially supported by NSF grants DMS-1148634, DMS-1351590, and Sloan Fellowship.}
\keywords{discrete solitons, affine symmetric group, Lusztig relations}

\begin{abstract}
We consider a discrete dynamical system where the roles of the states and the carrier are played by translations in an affine Weyl group of type $A$. The Coxeter generators are enriched by parameters, and the interactions with the 
carrier are realized using Lusztig's braid move $(a,b,c) \mapsto (bc/(a+c), a+c, ab/(a+c))$. We use wiring diagrams on a cylinder to interpret chamber variables as $\tau$-functions.  This allows us to realize our systems as reductions of the Hirota bilinear difference equation and thus obtain $N$-soliton solutions.  
\end{abstract}

\maketitle

\setcounter{tocdepth}{1}
\tableofcontents

\section{Introduction}

\subsection{Solitons}

One of the most remarkable properties a non-linear differential equation can possess is the existence of {\it {soliton solutions}}. The simplest and one of the first such equation to be discovered was the Korteweg–-de Vries equation, or KdV:
$$u_t + 6 u u_x + u_{xxx} = 0.$$
Here $u = u(x,t)$ is a function of two continuous parameters $x$ and $t$, and the lower indices denote derivatives with respect to the specified variables. The three universal features characterizing soliton solutions are as follows.
\begin{enumerate}
 \item Existence of {\bf {$1$-soliton solutions}}. These are single hump solutions which propagate with time without changing their shape. Such solutions were first described by Korteweg and de Vries \cite{KdV}.
 \item Existence of {\bf {multi-soliton solutions}}. These behave in a manner resembling linear combinations of $1$-soliton solutions. They are not however merely linear combinations, as one cannot add solutions of non-linear equations. Experimentally, one observes that several solitons within a multi-soliton solution interact with each other, regaining their original shapes once the interaction is over. The existence of such pseudo linear combination solutions for non-linear equations is a miracle, closely related to the {\it {integrability}}
 property of the equations. Multi-soliton solutions were first observed experimentally
 by Kruskal and Zabusky \cite{KZ}, and the exact formulas for such solutions were found by Gardner, Greene, Kruskal and Miura \cite{GGKM}. The term soliton was coined by Kruskal and Zabusky to emphasize the particle-like nature of those waves. 
 \item {\bf {Spontaneous emergence}} of solitons, or {\bf {soliton resolution}}. If one starts with arbitrary initial conditions $u(x,0)$ that decay rapidly as $x \rightarrow \pm \infty$, one can try to solve for $u(x,t)$ for $t>0$, either numerically or exactly. What one observes is the emergence of several soliton humps moving in one direction, and of chaotic looking {\it {radiation}} moving in the other direction (see Figure \ref{figEmergence} for a typical picture). The exact number and sizes of solitons emerging can be found by solving the {\it {Shr\"odinger scattering problem}}, also known as the {\it {direct scattering problem}}, as discovered by Gardner, Greene, Kruskal and Miura \cite{GGKM}.
\end{enumerate}

To illustrate, 
Figure \ref{figSoliton} shows a $2$-soliton. The first hump, which is both larger and faster, passes through the second hump. Both retain their original shape afterwords.
\begin{figure} [h!]
\includegraphics[width=4cm,height=4cm]{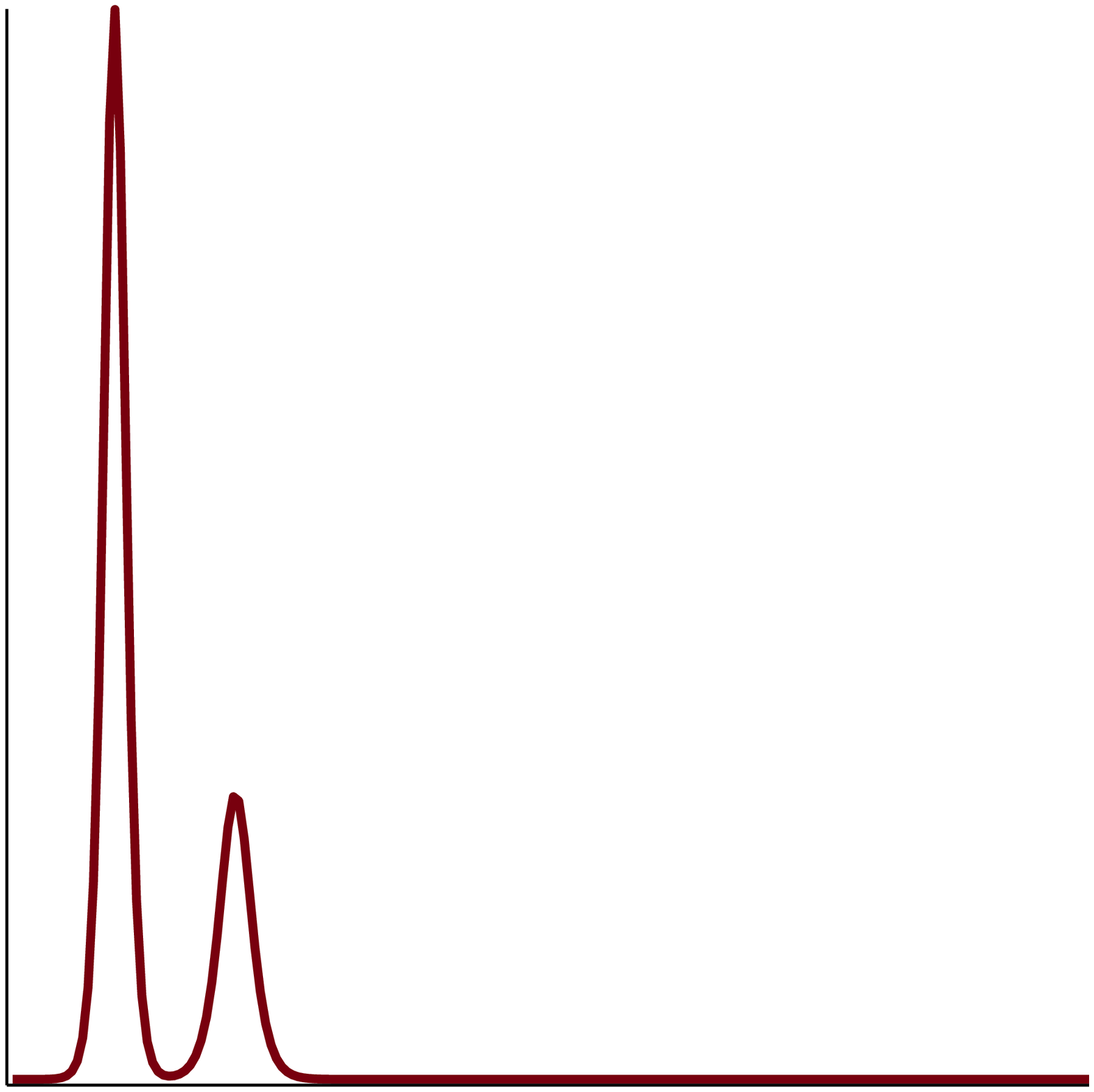}
\includegraphics[width=4cm,height=4cm]{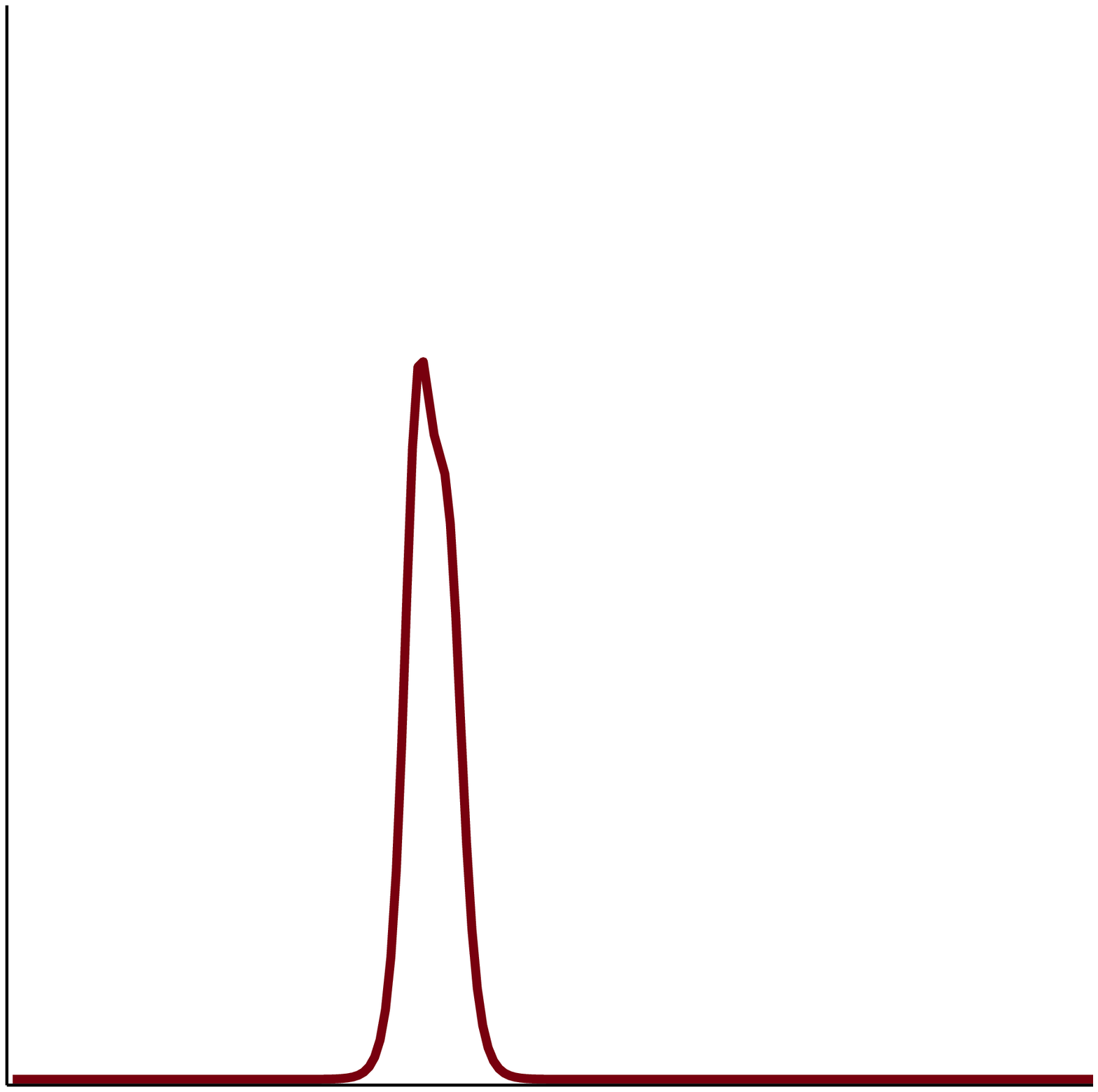}
\includegraphics[width=4cm,height=4cm]{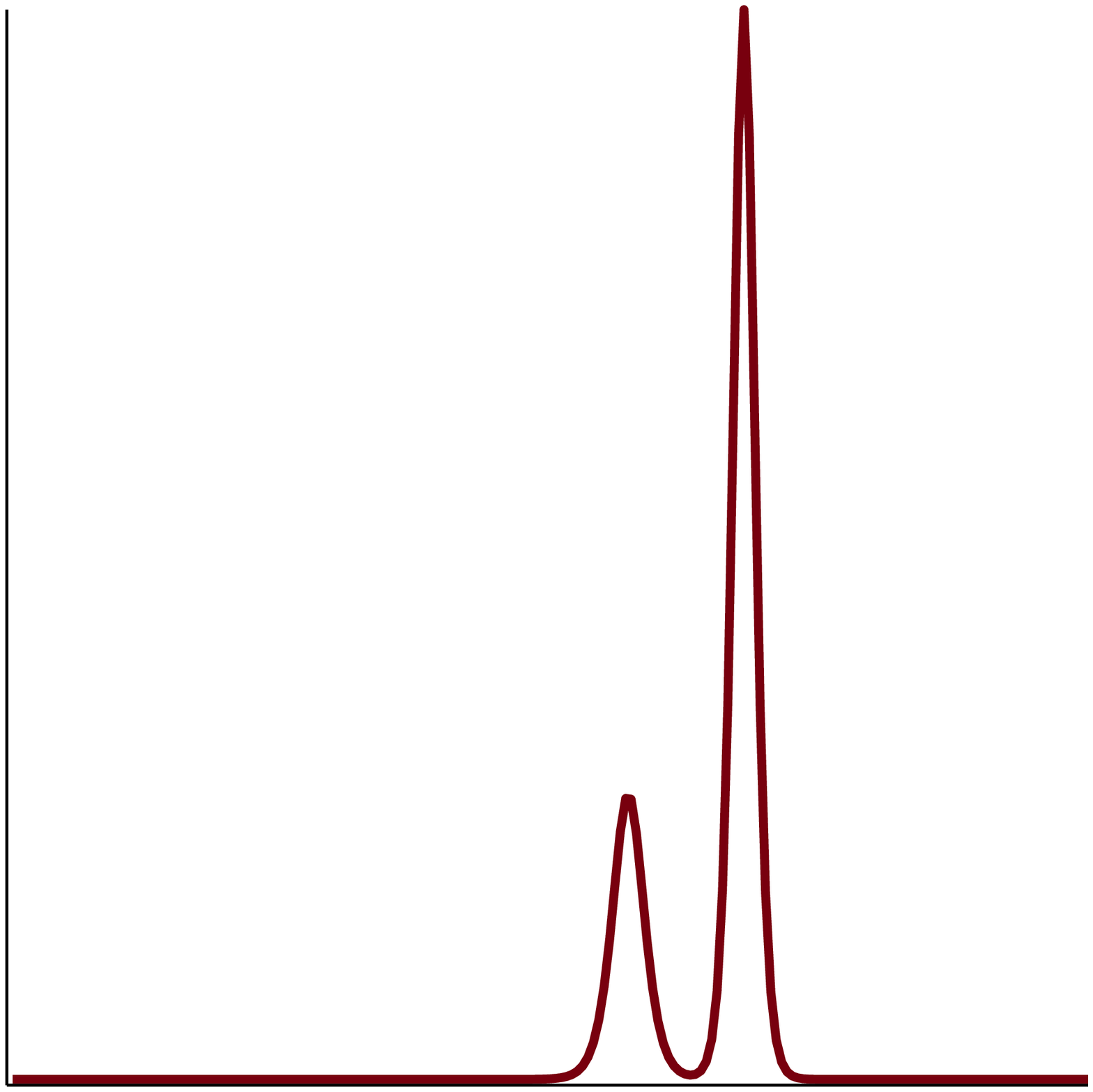}
\caption{A 2-soliton}
\label{figSoliton}
\end{figure}
The two humps do interact however, as can be detected by measuring the difference between their positions and their would be positions if they were just moving with constant velocity. 

We refer the reader to \cite{K} for an accessible introduction to solitons, and to \cite{AC, DJ} for a more comprehensive treatment of the subject. 

\subsection{Three levels of discreteness}

KdV equation and its solutions are an example of a {\it {continuous system}}. Remarkably, there are also {\it {discrete}} and {\it {ultradiscrete}} systems exhibiting solitonic behavior. The main examples of those are Hirota's {\it {discrete KdV}} \cite{H} and 
Takahashi-Satsuma {\it {box ball system}} \cite{TS}. The differences between the three levels of discreteness are summarized in the following table. 

\begin{center}
\begin{tabular}{|l| c| c| c|}
\hline
$ $ & Continuous (e.g. KdV) & Discrete (e.g. dKdV) & Ultradiscrete (e.g. box ball) \\[.5ex]
\hline
Space & continuous & discrete & discrete \\[.5ex]
\hline
Time & continuous & discrete & discrete \\[.5ex]
\hline
Range & continuous & continuous & discrete \\[.5ex]
\hline
Soliton & \scalebox{0.5}{\begin{picture}(0,0)%
\includegraphics{sol21.pstex}%
\end{picture}%
\setlength{\unitlength}{2486sp}%
\begingroup\makeatletter\ifx\SetFigFont\undefined%
\gdef\SetFigFont#1#2#3#4#5{%
  \reset@font\fontsize{#1}{#2pt}%
  \fontfamily{#3}\fontseries{#4}\fontshape{#5}%
  \selectfont}%
\fi\endgroup%
\begin{picture}(4094,2263)(10104,-11783)
\end{picture}%
} & \scalebox{0.5}{\begin{picture}(0,0)%
\includegraphics{sol22.pstex}%
\end{picture}%
\setlength{\unitlength}{2486sp}%
\begingroup\makeatletter\ifx\SetFigFont\undefined%
\gdef\SetFigFont#1#2#3#4#5{%
  \reset@font\fontsize{#1}{#2pt}%
  \fontfamily{#3}\fontseries{#4}\fontshape{#5}%
  \selectfont}%
\fi\endgroup%
\begin{picture}(4094,2294)(10104,-11783)
\end{picture}%
} & \scalebox{0.5}{\begin{picture}(0,0)%
\includegraphics{sol23.pstex}%
\end{picture}%
\setlength{\unitlength}{2486sp}%
\begingroup\makeatletter\ifx\SetFigFont\undefined%
\gdef\SetFigFont#1#2#3#4#5{%
  \reset@font\fontsize{#1}{#2pt}%
  \fontfamily{#3}\fontseries{#4}\fontshape{#5}%
  \selectfont}%
\fi\endgroup%
\begin{picture}(4094,944)(10104,-11783)
\end{picture}%
}\\[.5ex]
\hline 
Carrier  & no & yes & yes \\[.5ex]
\hline
\end{tabular}
\end{center}

In the case of the dKdV and the box ball system, the evolution can be described by passing a {\it {carrier}} through an infinite sequence of states. The interaction between the carrier and a state is realized by $\mathfrak{sl}_2$ versions of {\it {geometric}} and 
{\it {combinatorial $R$-matrices}}. 

\subsection{Evolution via Lusztig relations in loop groups}

In this paper we are going to consider a different model for the carrier-state interaction. Specifically, we are going to consider pairs of translations in the weight lattice of the affine Coxeter group $\hat S_n$. Assume $u$ and $v$ are the pair of elements of $\hat S_n$ which realize those translations. We call such elements {\it {glides}}, see the next section for details. 

Let us fix reduced decompositions $u = s_{i_1} \cdots s_{i_l}$ and $v = s_{j_1} \cdots s_{j_m}$. We are going to assume that $vu$ is reduced. Let us enrich each Coxeter generator $s_i$ by a real parameter $a$,  denoting the result $s_i(a)$. We call such parameters {\it {wall parameters}},
for reasons to be clear later. 
The $s_i(a)$ can be now thought of as (exponents of) the {\it {Chevalley generators}} of the polynomial loop into $GL_n$, 
$$s_i(a) = \exp(a \epsilon_i),$$ where $\epsilon_i$ is an upper triangular Chevalley generator of the corresponding Lie algebra. The study of the {\it {totally positive parts}} of algebraic groups generated by exponents $\exp(a \epsilon_i)$ with $a>0$ was initiated by Lusztig \cite{L}.
The study of the totally positive parts of loops into $GL_n$ was undertaken in \cite{LP1,LP2}. We refer the reader specifically to \cite{LP2} for the details. 

Now that our glides are enriched by parameters, we can consider their interaction, consisting of commuting the two translations with each other. The non-trivial part is to say what happens to the parameters. This is uniquely determined however by requesting that the identity 
\begin{align*}
u'v'&= s_{i_1+k_2}(y_1') \cdots s_{i_l+k_2}(y_l')s_{j_1-k_1}(z_1') \cdots s_{j_m-k_1}(z_m') \\
&= s_{j_1}(z_1) \cdots s_{j_m}(z_m)s_{i_1}(y_1) \cdots s_{i_l}(y_l) = vu
\end{align*}
holds in the loop group, where $k_1$ and $k_2$ are certain integer offsets needed to make the identity work (see Section \ref{secSetup}).  More concretely, the $y_i'$ and $z_i'$ are computed recursively from $y_1,\ldots, y_l$ and $z_1,\ldots, z_m$ by the Lusztig relations
\begin{equation} \label{eqLusztig}
\begin{split}
&s_i(a) s_j(b) s_i(c) = s_j \left(\frac{bc}{a+c} \right) s_i(a+c) s_j \left(\frac{ab}{a+c} \right), \text{ if } i-j \equiv \pm 1 \pmod{n}; \\
&s_i(a) s_j(b) = s_j(b) s_i(a), \quad \text{ if } i-j \not\equiv 0,\pm 1 \pmod n.
\end{split}
\end{equation}

Fix $u$ (called the \emph{state glide}) and $v$ (called the \emph{carrier glide}) as above.  Also fix two sets of parameters, one for $u$ and one for $v$, which have the property of remaining unchanged when the carrier interacts with the state. We refer to those choices of parameters as {\underline {the vacuum}} and {\underline {the initial carrier}} respectively. From this data we define a system, \emph{affine dKdV}, whose time evolution is carried out as follows.  
\begin{figure}[ht]
\scalebox{0.6}{\input{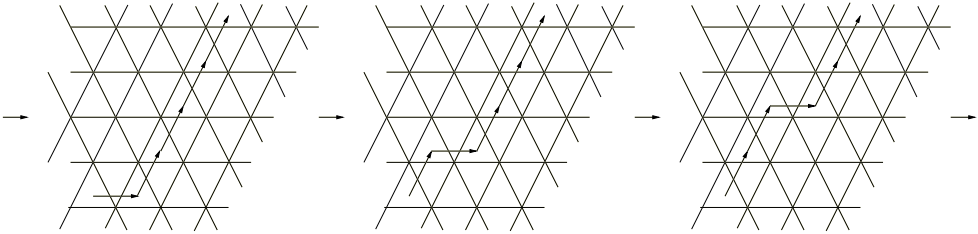}}
    \caption{The carrier interacting with the states}
    \label{fig:aff4}
\end{figure}
The initial data consists of an infinite power of the state glide, with the parameters approaching the vacuum at the limit in both directions.  Then we {\underline {push the carrier through each state}}, i.e. have it interact with each state in turn, assuming the carrier approaches from one direction with parameters 
equal to the initial carrier in the limit. This constitutes one step of the time evolution.

\subsection{Solitons: existence and spontaneous emergence}

Assume we choose arbitrarily the following data. First, we choose $n$ and two glides $v$ and $u$, with a fixed choice of reduced words $u = s_{i_1} \cdots s_{i_l}$ and $v = s_{j_1} \cdots s_{j_m}$. Assume that the concatenation $s_{j_1} \cdots s_{j_m}s_{i_1} \cdots s_{i_l}$
is also reduced. Assume we choose positive parameters in the corresponding initial carrier $\z_{-\infty}$ and vacuum $\w$ so that the parameters remain the same after the interaction. 
Finally, assume we start with the initial state which is equal to vacuum everywhere but in a finite number of states, and we run repeatedly
the time evolution. 

We are going to fix $1 \leq h \leq l$ and consider the function $f_h: \mathbb Z \rightarrow \mathbb R_{>0}$ defined as $$f_h(j) = \text{ the parameter of } s_{i_h} \text{ in the } j \text{-th state}.$$ 
In other words, in each state we pick the $h$-th parameter from the beginning. As time evolution proceeds, the values of $f_h$ change. One can observe then that several ``waves'' which preserve their shape and move with velocity approaching constant emerge, leaving a certain amount of 
chaotic ``radiation'' behind. The definition is imprecise since we
only ``see'' our values at integer points $j \in \mathbb Z$, and thus fluctuations occur that prevent the shape of being exactly preserved. This is the usual situation for discrete solitons. We  make the following conjecture. 

\begin{conj}
 For any choice of the data as above spontaneous emergence of solitons occurs for a generic choice of the initial data. 
\end{conj}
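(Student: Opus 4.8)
Since the final statement is a conjecture, my aim is to outline a scattering-transform strategy rather than a finished argument. The plan is to build a direct scattering transform adapted to the affine dKdV dynamics, leaning on the two structural results established earlier: the interpretation of chamber variables as $\tau$-functions, and the realization of the dynamics as a reduction of the Hirota bilinear difference equation. The point of departure is that the finitely supported initial state --- equal to the vacuum $\w$ outside a finite window --- produces a $\tau$-function that, while not itself an exact $N$-soliton $\tau$-function, should nonetheless be governed by the same combinatorial data (a finite collection of spectral parameters together with amplitudes) that controls the explicit $N$-soliton solutions the paper constructs. First I would isolate this spectral data directly from the initial chamber variables and argue that it is conserved by the time evolution.

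The second step is to linearize the flow on the spectral data. Because the carrier--state interaction is governed by the Lusztig relations \eqref{eqLusztig}, which are precisely the relations making the Hirota reduction consistent, one step of the time evolution should translate into an explicit, soliton-by-soliton shift of the phases, with the velocity of each soliton determined by its spectral parameter. The essential claim to verify here is that distinct spectral parameters yield distinct velocities for generic initial carrier $\z_{-\infty}$ and vacuum $\w$; this is exactly where the genericity hypothesis enters and where resonances must be excluded.

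The third step is the large-time asymptotic analysis. Once the velocities are distinct, space-time splits into finitely many cones, one per soliton, separated by a slower dispersive region. Inside the $k$-th cone I would show that the $\tau$-function is asymptotically a single-soliton $\tau$-function carrying the $k$-th spectral parameter, so that the sampled profile $f_h(j)$ restricted to that cone converges to a traveling shape up to the unavoidable discreteness fluctuations, while the remaining contributions to the $\tau$-function decay in the complementary region and account for the radiation.

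The main obstacle is twofold. First, one must make \emph{spontaneous emergence} precise: because the dynamics is sampled only at integer sites $j$, the soliton profiles are not exactly preserved, and the correct assertion is convergence of the sampled profile up to bounded fluctuation --- formulating and then proving this quantitatively is the crux. Second, and harder, is controlling the dispersive part and showing it does not reorganize into further solitons. For the ultradiscrete box-ball system the analogous resolution statement is proved by crystal and $RSK$ combinatorics, so one promising route is to tropicalize the present parametrized system to that combinatorial model, transfer the known result, and lift it back using continuity of the total-positivity parametrization in the sense of \cite{LP2}. Whether such a lift survives the ultradiscrete limit --- in particular whether the radiation estimate degenerates --- is the central difficulty, and is why the statement is posed only as a conjecture.
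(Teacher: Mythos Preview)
The paper does not prove this statement: it is left as an open conjecture, and the authors explicitly pivot to the weaker goal of constructing $N$-soliton solutions (Theorem \ref{thmMain}). So there is no proof in the paper against which to compare your proposal.

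That said, your outline is a sensible roadmap and you correctly flag that it is only a strategy. A few remarks. Your first step --- extracting conserved spectral data from a finitely perturbed vacuum --- is the heart of any inverse-scattering approach, but nothing in the paper establishes that such data exist or are conserved for generic (non-soliton) initial conditions; the paper only shows that \emph{if} one starts from the explicit $\tau$-functions of Section \ref{sec:nsol} then the dynamics preserves them. Your third step presumes an asymptotic factorization of the $\tau$-function into single-soliton pieces along cones, which is standard for KdV but would need to be proved from scratch here, and the discrete sampling issue you raise is exactly the imprecision the authors themselves acknowledge. The tropicalization-and-lift idea is the most concrete suggestion, but as you note the key question is whether radiation estimates survive de-tropicalization; this is genuinely open and is why the statement remains a conjecture.
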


\begin{ex}
 Let $n=3$ and pick glides $u=s_2s_1$, $v=s_1s_2s_1s_0$.  One can check the vacuum $\w = (1,4)$ and initial carrier $\z_{-\infty} = (3,4,1,4)$ are unchanged by interaction.  We let $\ldots, \y_1,\y_2,\ldots$ be the initial states with $\y_j = \w$ except for $j=51,52,\ldots, 100$.  In this range, assign independently a random value to both entries of $\y_j$ with said values drawn from the interval $[1,10)$.  Figure \ref{figEmergence} shows one possible outcome after $50$ steps of the time involution.  In the notation above, $h=1$ meaning we plot the first entry of each state.  The solitons that emerge can be seen to the right in the second plot.
\end{ex}

\begin{figure} [h!]
\includegraphics[width=4cm,height=4cm]{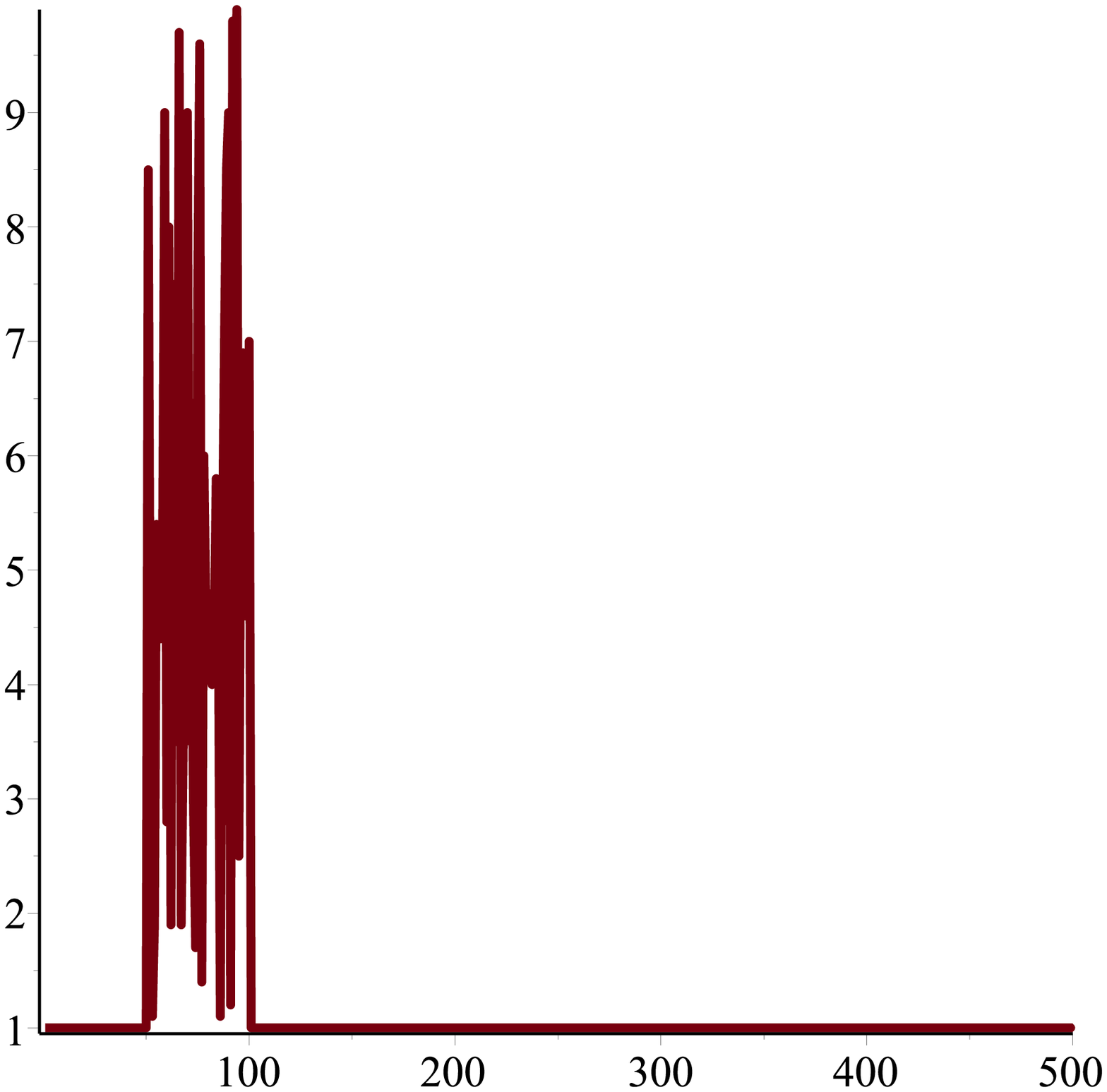}
\includegraphics[width=4cm,height=4cm]{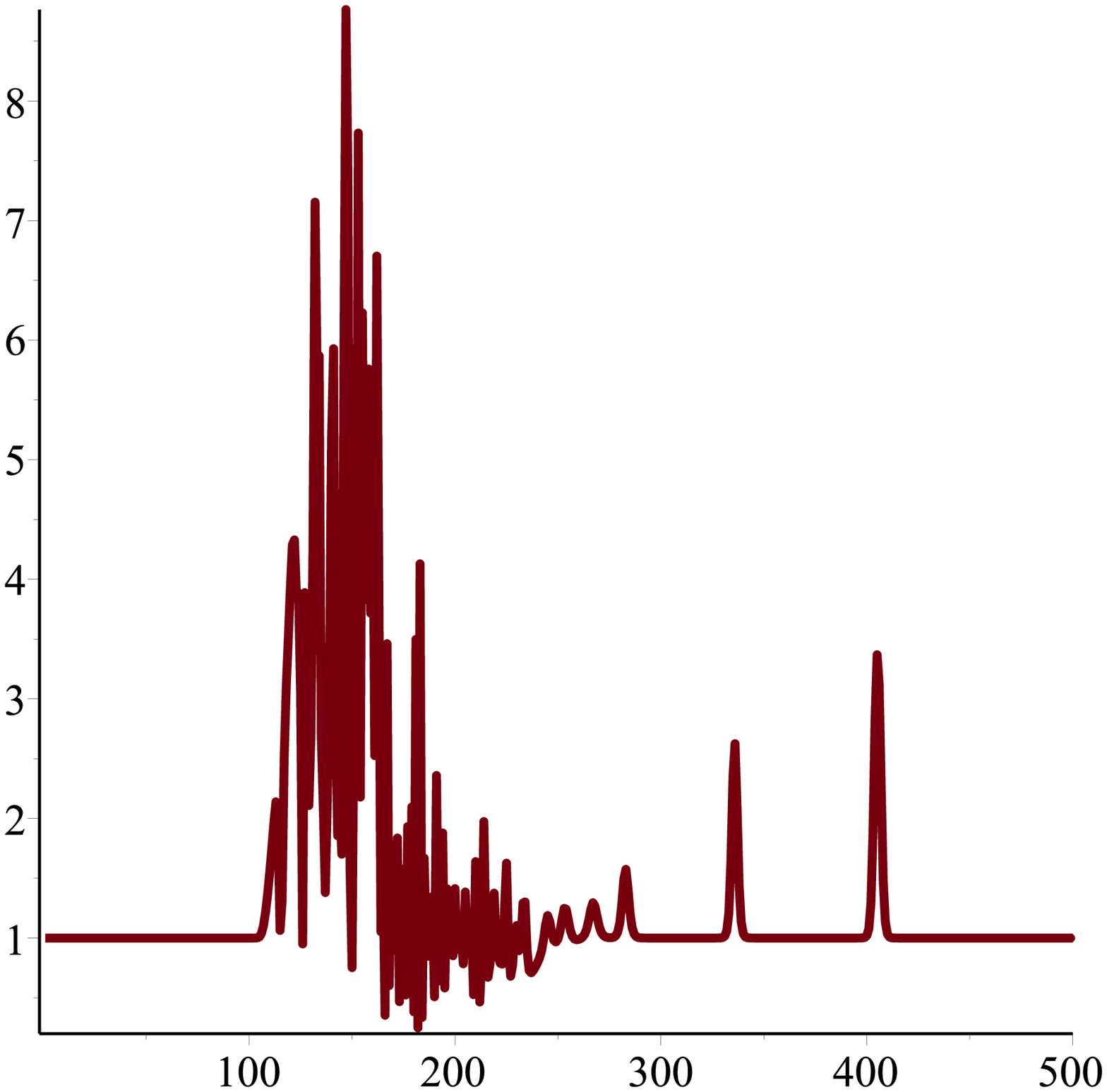}
\caption{Initial data for affine dKdV (left) and the resulting states after $50$ steps (right).  The latter illustrates direct scattering with solitons in front followed by radiation.}
\label{figEmergence}
\end{figure}

Now, a weaker goal than proving this conjecture would be to prove existence of $N$-soliton solutions, i.e. states that have $N$ soliton humps and no radiation. 
We state the main theorem of this paper.

\begin{thm} \label{thmMain}
 For certain natural choices of the parameters of the model, $N$-soliton solutions exist. 
\end{thm}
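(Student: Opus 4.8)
The plan is to follow the route advertised in the abstract: reinterpret the parametrized factorizations as decorated wiring diagrams on a cylinder, read the chamber variables of those diagrams as $\tau$-functions, and show that the time evolution of \emph{affine dKdV} is governed by a reduction of the Hirota bilinear difference equation, whose $N$-soliton solutions are classical. To set up the dictionary, note that a reduced word for the state--carrier pair, decorated by its wall parameters, is recorded by a wiring diagram; since the generators are indexed by $\Z/n\Z$ and the two elements realize translations that wrap around, the natural ambient surface is a cylinder. To each bounded chamber I would attach a variable and verify, via the chamber-ansatz philosophy of total positivity (in the spirit of \cite{LP2}), that every wall parameter $y_i$, $z_i$ is recovered as a fixed Laurent monomial in the chamber variables adjacent to the corresponding crossing, so that the parameter data and the chamber-variable data are equivalent up to gauge.

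Next I would translate the Lusztig move into the chamber variables. A single application of $s_i(a)s_j(b)s_i(c) = s_j(bc/(a+c)) s_i(a+c) s_j(ab/(a+c))$ is a local braid move under which all chamber variables are fixed except the one in the central region, which is replaced according to a three-term bilinear relation. I claim this relation is an instance of the Hirota bilinear difference equation
\begin{equation*}
\alpha\,T_{l+1,m,n}\,T_{l,m+1,n+1} + \beta\,T_{l,m+1,n}\,T_{l+1,m,n+1} + \gamma\,T_{l,m,n+1}\,T_{l+1,m+1,n} = 0;
\end{equation*}
the additive appearance of $a+c$ in the move is exactly what forces a bilinear, rather than multiplicative, recurrence. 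Pushing the carrier through one state iterates the move, and repeating the procedure over discrete time lays the chamber variables out on the Hirota lattice. The periodicity coming from the cylinder, together with the fixed glides $u$ and $v$, cuts this down to a definite \emph{reduction} of the three-variable equation, and making this reduction explicit is where the dependence on $n$ and on the chosen reduced words enters.

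Finally I would import the $N$-soliton solutions of the Hirota bilinear difference equation, which have the explicit sum-over-subsets form
\begin{equation*}
\tau = \sum_{J \subseteq \{1,\dots,N\}}\ \prod_{r \in J} c_r \prod_{\substack{r,s \in J\\ r<s}} A_{rs}\ \prod_{r \in J}\rho_r,
\end{equation*}
where each $\rho_r$ is a discrete plane wave built from soliton momenta and $A_{rs}$ is the interaction coefficient fixed by the dispersion relation. The ``certain natural choices of the parameters'' of Theorem \ref{thmMain} are then the soliton momenta and amplitudes chosen so that (i) each $\rho_r$ respects the periodicity imposed by the cylinder, so that $\tau$ descends to a solution of the reduced equation, and (ii) the resulting chamber variables, and hence all wall parameters, remain positive. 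Substituting such a $\tau$ into the Laurent monomials from the first step expresses each $f_h$ as a function on $\Z$ whose background is the vacuum in both directions and which carries exactly $N$ localized humps, that is, an $N$-soliton with no radiation.

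The step I expect to be the main obstacle is matching the cylinder geometry to a concrete reduction of the three-dimensional equation and then choosing soliton parameters that lie simultaneously on the reduction variety and in the totally positive regime. The chamber-variable-to-$\tau$-function translation and the existence of soliton solutions for the Hirota bilinear difference equation are essentially known technology; the real work is checking that the periodicity constraints do not push the interaction coefficients or the plane-wave factors out of the positive range, and confirming that the asymptotics of $f_h$ are genuinely vacuum plus exactly $N$ humps with no leftover radiation.
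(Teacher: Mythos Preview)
Your proposal is correct and follows essentially the same route as the paper: wiring diagrams on a cylinder, chamber variables as $\tau$-functions obeying Hirota-type bilinear relations (what the paper calls a cylindric Berenstein--Hirota--Zelevinsky datum), and then importing the classical sum-over-subsets $N$-soliton formula subject to a periodicity constraint. The only difference in framing is that the paper places the $\tau$-functions on an $n$-dimensional lattice $\Z^n$ (one direction per wire, with a bilinear equation for every triple of directions) rather than on $\Z^3$, and the cylinder imposes $\tau_{[S]+(1,\ldots,1)}=\tau_{[S]}$, which on the soliton side becomes the explicit condition $\prod_j(b_i-\alpha_j)=\prod_j(c_i-\alpha_j)$ on each momentum pair.
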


Figure \ref{figInteraction1} depicts a $2$-soliton solution of an instance of affine dKdV.  The larger and smaller soliton waves pass through each other during roughly the time interval $t=10$ to $t=30$.

\begin{figure}
\vspace{-0.5in}
\includegraphics[width=12cm,height=12cm]{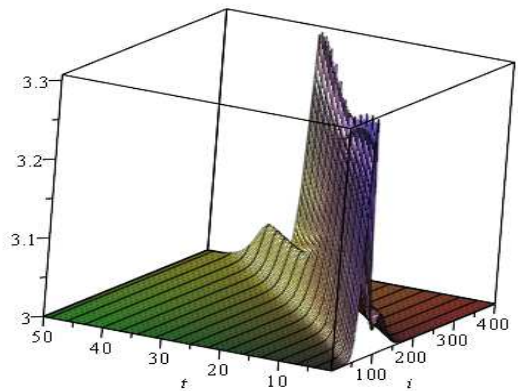}
\vspace{-1.7in}
\caption{A 2-soliton solution to affine dKdV}
\label{figInteraction1}
\end{figure}

We prove Theorem \ref{thmMain} by finding a lift from affine dKdV to the Hirota bilinear difference equation.  The latter is known to possess soliton solutions, see e.g. \cite{N} for explicit formulas.  From these formulas, the solitons for affine dKdV are obtained by taking double ratios in a pattern that depends on the glides $u$ and $v$.  The conversion between the two systems, which is itself a main focus of the current paper, touches on interesting combinatorics of wiring diagrams, weighted directed networks, and the alcove model for the affine symmetric group.

The remainder of this paper is organized as follows.  In Section \ref{secSetup} we formally define affine dKdV in terms of dynamics of infinite reduced words in the affine symmetric group.  Sections \ref{secNetworkModel} and \ref{secNetwork} provide a model for our system in terms of dynamics, previously studied in \cite{LP3}, on directed networks.  A quick application of the network model is that it gives rise to a method for producing commuting state and carrier pairs.  We interpret face variables as $\tau$-functions and describe their dynamics in Section \ref{secBHZ}.  This all culminates in the formulas for solitons given in Section \ref{sec:nsol} as promised in Theorem \ref{thmMain}.  Section \ref{secAlcove} introduces the alcove model which clarifies several matters including how to find compatible glides and how to arrange for all parameters to be positive.  In Section \ref{secTopological} we explain an unexpected feature of our system, that the parameter space of one-solitons is disconnected, and we explore how soliton speed varies from component to component.  Lastly, Section \ref{sec:cf} proves that in many cases affine dKdV admits a local definition, i.e. one not depending on a dynamic carrier.

\section{Dynamics in infinite reduced words} \label{secSetup}

In this section, we introduce the affine dKdV system.  We begin by describing the fixed parameters of the system which consist of a combination of combinatorial data (a pair of reduced words in an affine symmetric group) and numerical data. 

\subsection{Carrier description via Yang-Baxter moves}
The affine symmetric group $\hat{S}_n$ is defined by generators $s_0, s_1, \ldots, s_{n-1}$ (with indices always taken modulo $n$) and relations
\begin{displaymath}
\begin{split}
&s_i^2 = 1 \text { for all } i; \\
&s_is_js_i = s_js_is_j \text{ if } i-j \equiv \pm 1 \pmod{n}; \\
&s_is_j = s_js_i \text{ if } i-j \not\equiv 0,\pm 1 \pmod n.
\end{split}
\end{displaymath}
The relations are clearly preserved by cyclic shifts, so there is an automorphism $\rho : \hat{S}_n \to \hat{S}_n$ such that $\rho(s_i) = s_{i+1}$ for all $i$.

  There is a homomorphism $\phi: \hat{S}_n \to S_n$ to the ordinary symmetric group that takes each $s_i$ to a transposition in $S_n$.  Specifically $\phi(s_i) = (i\ \ i+1)$ for $i=1,2,\ldots, n-1$ and $\phi(s_0) = (1\ \ n)$.  A $\emph{translation}$ in $\hat{S}_n$ is an element that maps to the identity in $S_n$.  The set of translations form an Abelian subgroup of the affine symmetric group.  

Say that $g \in \hat{S}_n$ is a \emph{glide} if 
\begin{displaymath}
\phi(g) = \left(
\begin{array}{lllllll} 
1 & 2 & \cdots & n-k & n-k+1 & \cdots & n \\
1+k & 2+k & \cdots & n & 1 & \cdots & k \\
\end{array}
\right)
\end{displaymath}
for some $k=0,1,\ldots, n-1$.  Call $k$ the \emph{offset} of the glide.  
It is not hard to see that $\rho$ sends glides to glides and preserves offsets.

\begin{lem} \label{lemRho}
Let $g_1, g_2 \in \hat{S}_n$ be glides with offsets $k_1$ and $k_2$ respectively.  Then
\begin{displaymath}
g_2g_1 = \rho^{k_2}(g_1)\rho^{-k_1}(g_2).
\end{displaymath}
\end{lem}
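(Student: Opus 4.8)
The plan is to realize the abstract automorphism $\rho$ concretely as conjugation by a shift, and then reduce the entire identity to the commutativity of the translation subgroup. First I would pass to the extended affine symmetric group $\tilde{S}_n$ of all bijections $\sigma : \mathbb{Z} \to \mathbb{Z}$ with $\sigma(x+n) = \sigma(x)+n$, inside which $\hat{S}_n$ sits as the subgroup satisfying $\sum_{x=1}^n \sigma(x) = \binom{n+1}{2}$. Let $\mathrm{sh} \in \tilde{S}_n$ denote the shift $x \mapsto x+1$. A direct check on generators shows $\mathrm{sh}\, s_i\, \mathrm{sh}^{-1} = s_{i+1}$ for all $i$ (indices mod $n$), so that $\rho$ is nothing but conjugation by $\mathrm{sh}$, i.e. $\rho^k(\,\cdot\,) = \mathrm{sh}^k(\,\cdot\,)\mathrm{sh}^{-k}$. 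The homomorphism $\phi$ extends to $\tilde{S}_n \to S_n$ by recording the induced permutation of residues, and under this extension $\phi(\mathrm{sh})$ is the long cycle $c = (1\ 2\ \cdots\ n)$, so that $\phi(\mathrm{sh}^k) = c^k$.

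The second ingredient is a normal form for glides. Since a glide $g$ of offset $k$ satisfies $\phi(g) = c^k = \phi(\mathrm{sh}^k)$, the element $t := \mathrm{sh}^{-k} g$ lies in $\ker \phi$, i.e. it is a translation. Hence every glide factors as $g = \mathrm{sh}^{k}\, t$ with $t$ a translation. I will use two facts about the translation subgroup of $\tilde{S}_n$: it is abelian (extending the statement already recorded in the excerpt for $\hat{S}_n$), and it is normalized by $\mathrm{sh}$, so $\rho$ carries translations to translations.

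With $g_1 = \mathrm{sh}^{k_1} t_1$ and $g_2 = \mathrm{sh}^{k_2} t_2$ in hand, the proof becomes a short calculation. Expanding the right-hand side,
\begin{align*}
\rho^{k_2}(g_1)\,\rho^{-k_1}(g_2)
&= \mathrm{sh}^{k_2} g_1 \mathrm{sh}^{-k_2}\cdot \mathrm{sh}^{-k_1} g_2 \mathrm{sh}^{k_1}
= \mathrm{sh}^{k_1+k_2}\, t_1\, \mathrm{sh}^{-k_1} t_2 \mathrm{sh}^{k_1},
\end{align*}
and since $\mathrm{sh}^{-k_1} t_2 \mathrm{sh}^{k_1} = \rho^{-k_1}(t_2)$ is again a translation, it commutes with $t_1$. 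Swapping the two translations and repackaging gives $\mathrm{sh}^{k_2} t_2 \mathrm{sh}^{k_1} t_1 = g_2 g_1$, which is the left-hand side.

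The one genuinely substantive step is the first: pinning down the realization of $\rho$ as conjugation by $\mathrm{sh}$ together with the glide factorization $g = \mathrm{sh}^k t$. Everything afterward is forced by the abelian-ness of translations. The pitfalls to guard against are purely bookkeeping ones, namely getting the index conventions for $s_0$ versus $\mathrm{sh}$ right so that $\mathrm{sh}\, s_i\, \mathrm{sh}^{-1} = s_{i+1}$ really holds across the wraparound $i = n-1 \mapsto 0$, and remembering that $t$ need only be a translation in $\tilde{S}_n$ (by a coweight), not necessarily one of the root translations living in $\hat{S}_n$.
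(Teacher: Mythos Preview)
Your proof is correct and follows essentially the same strategy as the paper: pass to the extended affine symmetric group so that $\rho$ becomes conjugation by a shift element (your $\mathrm{sh}$, the paper's $\tau$), and then reduce the identity to the commutativity of two translations. The only cosmetic difference is that you factor $g_i = \mathrm{sh}^{k_i} t_i$ up front and commute $t_1$ with $\rho^{-k_1}(t_2)$, whereas the paper directly observes that $\tau^{-k_2}g_2$ and $g_1\tau^{-k_1}$ are translations and commutes those; the underlying mechanism is identical.
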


\begin{proof}
It is easier to prove this result in greater generality, namely in the extended affine symmetric group defined by adding to $\hat{S}_n$ a new generator $\tau$ together with relations $\tau s_i \tau^{-1} = s_{i+1}$ for all $i$.  The map to the symmetric group $\phi$ and the automorphism $\rho$ both extend to this setting via 
\begin{align*}
\phi(\tau) &= (1\ 2\ \cdots \ n) \\
\rho(\tau) &= \tau
\end{align*}
The notions of translation, glide, and offset are all defined the same way as before and the set of translations still form an Abelian subgroup.  Note that $\tau$ is itself a glide of offset $1$.  The upshot is that $\rho$ is now an inner automorphism given by conjugation by $\tau$.

It is clear that the product of two glides is a glide and that the offset of the product is the sum of the offsets.  Therefore $\tau^{-k_2}g_2$ and $g_1\tau^{-k_1}$ are translations and hence commute with each other:
\begin{align*}
\tau^{-k_2}g_2g_1\tau^{-k_1} &= g_1\tau^{-k_1-k_2}g_2 \\
g_2g_1 &= (\tau^{k_2}g_1\tau^{-k_2})(\tau^{-k_1}g_2\tau^{k_1}) \\
&= \rho^{k_2}(g_1)\rho^{-k_1}(g_2)
\end{align*}
as desired.
\end{proof}

Let $u,v \in \hat{S}_n$ be glides.  Fix reduced decompositions $u = s_{i_1} \cdots s_{i_l}$ and $v = s_{j_1} \cdots s_{j_m}$.  Assume that  
\begin{displaymath}
vu = s_{j_1} \cdots s_{j_m}s_{i_1} \cdots s_{i_l}
\end{displaymath}
is reduced, i.e. that $vu$ has Coxeter length $m+l$.  Then by Lemma \ref{lemRho} another decomposition of the same element is
\begin{displaymath}
vu = \rho^{k_2}(u)\rho^{-k_1}(v) = s_{i_1+k_2} \cdots s_{i_l+k_2}s_{j_1-k_1} \cdots s_{j_m-k_1},
\end{displaymath}
so there is a sequence of braid moves and commutation relations carrying one word to the other.  Introducing weights $\y = (y_1,\ldots, y_l)$ and $\z = (z_1,\ldots, z_m)$ the rules \eqref{eqLusztig} can be used to obtain
\begin{displaymath}
(\y',\z') = F_{v,u}(\z,\y)
\end{displaymath}
such that 
\begin{align*}
&s_{i_1+k_2}(y_1') \cdots s_{i_l+k_2}(y_l')s_{j_1-k_1}(z_1') \cdots s_{j_m-k_1}(z_m') \\
&= s_{j_1}(z_1) \cdots s_{j_m}(z_m)s_{i_1}(y_1) \cdots s_{i_l}(y_l)
\end{align*}

Note that the function $F = F_{v,u}: (\mathbb{R}_{>0})^{m+k} \to (\mathbb{R}_{>0})^{k+m}$ depends on $u$ and $v$ and in fact on the choice of their reduced decompositions.  It does not however depend on the choice of elementary moves used to go from $vu$ to $\rho^{k_2}(u)\rho^{-k_1}(v)$ as follows from results of \cite{L}.

\begin{ex} \label{exF}
Let $n=3$ and let $u=s_1s_2s_1s_0$, $v=s_1s_0$.  Then $u$ and $v$ are glides with offsets $0$ and $2$ respectively.  Therefore 
\begin{displaymath}
(s_1s_0)(s_1s_2s_1s_0) = vu = \rho^2(u)v = (s_0s_1s_0s_2)(s_1s_0).
\end{displaymath}
In this case, the equality follows from a single $s_1s_0s_1=s_0s_1s_0$ braid move.  Adding weights, 
\begin{displaymath}
(s_1(e)s_0(f))(s_1(a)s_2(b)s_1(c)s_0(d)) = (s_0(a')s_1(b')s_0(c')s_2(d'))(s_1(e')s_0(f'))
\end{displaymath}
where $a' = fa/(e+a)$, $b'=e+a$, $c' = ef/(e+a)$, $d'=b$, $e'=c$, and $f'=d$.  The map $F$ is
\begin{displaymath}
F: ((e,f),(a,b,c,d)) \mapsto ((a',b',c',d'),(e',f')).
\end{displaymath}
\end{ex}

Fix glides with reduced decompositions $u = s_{i_1} \cdots s_{i_l}$ and $v = s_{j_1} \cdots s_{j_m}$ and assume as before that $vu$ is itself reduced.  Call $s_{i_1} \cdots s_{i_l}$ the \emph{state word} and $s_{j_1} \cdots s_{j_m}$ the \emph{carrier word}.  A \emph{state} is an element of $(\mathbb{R}_{>0})^l$, thought of as a choice of weights for $u$.  A \emph{carrier} is an element of $(\mathbb{R}_{>0})^m$, thought of as choice of weights for $v$.  The operation $(\z,\y) \mapsto (\y',\z') = F(\z,\y)$ is called an \emph{interaction} of the carrier $\z$ with the state $\y$.

In addition to $u$ and $v$, fix a pair consisting of a state $\w$ and a carrier $\z_{-\infty}$ such that $F(\z_{-\infty},\w) = (\w,\z_{-\infty})$.  Here $\w$ is called the \emph{vacuum} and $\z_{-\infty}$ is called the \emph{initial carrier}.  Table \ref{tabForm1} summarizes this setup.

\begin{table}
\begin{tabular}{|l|l|l|}
\hline
Name & Notation & Properties \\
\hline
State word & $u = s_{i_1} \cdots s_{i_l} \in \hat{S_n}$ & $u$ a glide\\
Carrier word & $v = s_{j_1} \cdots s_{j_m} \in \hat{S_n}$ & $v$ a glide, $vu$ reduced \\
Vacuum & $\w \in (\mathbb{R}_{>0})^l$ & \\
Initial Carrier & $\z_{-\infty} \in (\mathbb{R}_{>0})^m$ & $F_{v,u}(\z_{-\infty},\w) = (\w,\z_{-\infty})$\\
\hline
\end{tabular}
\caption{The parameters for the first formulation of affine dKdV}
\label{tabForm1}
\end{table}

Affine dKdV is a discrete dynamical system operating on the space of infinite state sequences $\ldots, \y_0, \y_1, \y_2, \ldots$ satisfying
\begin{displaymath}
\lim_{i\to \pm \infty} \y_i = \w.
\end{displaymath}
One step of time evolution consists of an infinite sequence of interactions in which a carrier moves left to right passing through each state one by one.  The carrier is initialized to $\z_{-\infty}$ and the interactions are
\begin{displaymath}
(\z_i, \y_i) \mapsto (\y_i', \z_{i+1}) = F(\z_i, \y_i)
\end{displaymath}
as $i$ ranges over $\mathbb{Z}$.  The output is the new state sequence $\ldots, \y_0', \y_1', \y_2', \ldots$.


Now, the map $F$ can be considered to be a weighted version of the identity $vu = \rho^{k_2}(u)\rho^{-k_1}(v)$.  In the same spirit, the full system as just defined should be thought of as dynamics on infinite reduced words as follows.  Consider the infinite word 
\begin{displaymath}
\cdots \rho^{3k_1}(u) \rho^{2k_1}(u) \rho^{k_1}(u) u \rho^{-k_1}(u)     \cdots
\end{displaymath}
and imagine inserting a rotation of $v$ very far to the left.  More precisely, if $\rho^{jk_1}(v)$ is immediately to the left of $\rho^{jk_1}(u)$ then we can push it to the right using
\begin{displaymath}
\rho^{jk_1}(v)\rho^{jk_1}(u) = \rho^{jk_1+k_2}(u)\rho^{jk_1-k_1}(v).
\end{displaymath}
Because $\rho^{jk_1}$ is an automorphism, the corresponding transformation of weights is still $F$.  Once this step is done we have that $\rho^{(j-1)k_1}(v)$ is to the left of $\rho^{(j-1)k_1}(u)$, so the process of pushing $v$ right can continue.  Letting the $\y_i$ for $i \in \mathbb{Z}$ be the weights of the $\rho^{-ik_1}(u)$ before the sweep, we obtain the $\y_i'$ as the weights of the $\rho^{-ik_1+k_2}(u)$ afterwords.

\subsection{Commuting pairs}
In Section \ref{secNetwork} we shall explain {\it {a way}} to create commuting carrier and vacuum pairs. For now, we shall make the following observation. Call a glide $v'$ {\it {primitive}} if for no other glide $v$ with offset $k$ 
and no integer $\ell \geq 1$ we have 
$$v' = v \rho^{-k}(v) \dotsc \rho^{-\ell k}(v).$$
Assume $\z = (z_1, \ldots, z_{m(\ell+1)})$ and $\w = (w_1, \ldots, w_{m'(\ell'+1)})$ are two states with the underlying glides $v \rho^{-k}(v) \dotsc \rho^{-\ell k}(v)$ and $u \rho^{-k'}(u) \dotsc \rho^{- \ell' k'}(u)$, where $v$ and $u$ are distinct primitive glides with offsets $k$ and $k'$ respectively. 
\begin{conj}
 If $F(\z, \w) = (\w, \z)$ and all parameters $z_i, w_j$ are positive, then $z_i = z_{i+m}$ and $w_j = w_{j+m'}$ for all $i$ and $j$, and $$F((z_1, \ldots, z_m), (w_1, \ldots, w_{m'})) = ((w_1, \ldots, w_{m'}), (z_1, \ldots, z_m)).$$
\end{conj}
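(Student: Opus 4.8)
The plan is to lift the fixed-point condition into the polynomial loop group and combine the uniqueness of totally positive factorizations with a positivity-driven uniqueness argument. Write the two underlying glides as products of their primitive blocks, $v\rho^{-k}(v)\cdots \rho^{-\ell k}(v) = P_0 P_1 \cdots P_\ell$ with $P_j = \rho^{-jk}(v)$, and $u\rho^{-k'}(u)\cdots \rho^{-\ell' k'}(u) = Q_0 Q_1 \cdots Q_{\ell'}$ with $Q_i = \rho^{-ik'}(u)$, so that the carrier glide has offset $K=(\ell+1)k$ and the state glide has offset $K'=(\ell'+1)k'$. Under $s_i(a)=\exp(a\epsilon_i)$, and writing $g[\cdot]$ for the loop-group element obtained by weighting the reduced word of a glide $g$, the weighted words $\z$ and $\w$ become totally positive elements $g$ and $h$. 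Passing to the extended affine group so that $\rho$ is inner (as in the proof of Lemma \ref{lemRho}), the hypothesis $F(\z,\w)=(\w,\z)$ is exactly the identity
\[ g\,h = \rho^{K}(h)\,\rho^{-K'}(g). \]

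First I would record the peeling structure of the interaction. Since the carrier word factors as $P_0\cdots P_\ell$, the map $F$ can be realized by transporting the primitive carrier blocks through the state one at a time, each transport being a $\rho$-shifted copy of the primitive interaction $F_{v,u}$; likewise each such transport splits further according to the factorization $Q_0\cdots Q_{\ell'}$ of the state. Because $F$ does not depend on the order in which the elementary moves \eqref{eqLusztig} are applied (\cite{L}), this decomposition into $(\ell+1)(\ell'+1)$ primitive interactions is unambiguous, and the fixed-point equation becomes the statement that the resulting composition acts as the total swap of the two weighted blocks.

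The heart of the argument, and the step I expect to be the main obstacle, is periodicity: $z_i=z_{i+m}$ and $w_j=w_{j+m'}$. I would phrase this through the first-return map of the peeled dynamics. Tiling the line by the primitive state glide and pushing a single primitive carrier once around one full period produces a subtraction-free birational self-map of the cone of positive weight blocks, for which the sought configuration $\z$ (respectively $\w$) is a fixed point. The manifestly periodic configuration is one such fixed point, so it suffices to show it is the \emph{only} positive one. Here positivity is essential: each Lusztig move $(a,b,c)\mapsto(bc/(a+c),a+c,ab/(a+c))$ is subtraction-free and hence non-expanding in the Hilbert projective metric, so the first-return map cannot have two widely separated positive fixed rays. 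Upgrading non-expansiveness to the strict contraction (or otherwise ruling out a nontrivial invariant subcone) needed for genuine uniqueness is the delicate point, and this is exactly where primitivity of $v$ and $u$ enters, ensuring there is no proper period and no degenerate invariant factorization to spoil uniqueness.

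Finally I would feed periodicity back into the loop-group identity. With $\z$ and $\w$ periodic, set $\zeta=(z_1,\dots,z_m)$, $\omega=(w_1,\dots,w_{m'})$ and $P=v[\zeta]$, $Q=u[\omega]$, so that $g=\prod_{j=0}^{\ell}\rho^{-jk}(P)$ and $h=\prod_{i=0}^{\ell'}\rho^{-ik'}(Q)$. Substituting these into $g\,h=\rho^{K}(h)\,\rho^{-K'}(g)$ and using the peeling decomposition together with the uniqueness of the totally positive factorization, the composition of primitive interactions collapses to the single primitive identity
\[ P\,Q = \rho^{k}(Q)\,\rho^{-k'}(P), \]
which by Lemma \ref{lemRho} is precisely $F_{v,u}(\zeta,\omega)=(\omega,\zeta)$. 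This yields all three conclusions simultaneously, with $m$ and $m'$ the exact periods because $v$ and $u$ are primitive.
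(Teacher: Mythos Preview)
This statement is a \emph{conjecture} in the paper; no proof is given there, only two illustrative examples (one showing the positivity hypothesis is necessary). So there is no paper proof to compare against, and your proposal should be read as an attempt to settle an open problem.

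Your outline has a genuine gap at exactly the place you flag. The periodicity step hinges on the first-return map having a unique positive fixed point, and you propose to get this from non-expansiveness in the Hilbert projective metric. But non-expansiveness is not enough: an isometry can have an entire cone of fixed points. Worse, the Lusztig braid move is an involution of the same subtraction-free shape---the inverse of $(a,b,c)\mapsto(bc/(a+c),\,a+c,\,ab/(a+c))$ is $(d,e,f)\mapsto(ef/(d+f),\,d+f,\,de/(d+f))$---so if the forward map is Hilbert non-expanding then so is the inverse, forcing the map to be an isometry. Hence the hoped-for upgrade to a strict contraction cannot be obtained from subtraction-freeness alone, and the uniqueness argument as written does not go through. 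Primitivity of $u$ and $v$ is invoked to rule out ``degenerate invariant factorizations,'' but no mechanism is given by which primitivity converts an isometry into something with a unique fixed point.

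The final step is also not fully justified. Even granting periodicity, you assert that the identity $gh=\rho^K(h)\rho^{-K'}(g)$ together with uniqueness of totally positive factorizations ``collapses'' to the primitive identity $PQ=\rho^k(Q)\rho^{-k'}(P)$. This needs an argument: uniqueness of factorization tells you the weighted word on each side is determined by the group element, but it does not by itself peel an $(\ell+1)(\ell'+1)$-block identity down to a single block. One would need to show, for instance, that the first primitive interaction already returns $(\omega,\zeta)$, which is close to assuming what is to be proved. In short, the architecture is reasonable as a strategy, but the two load-bearing steps---uniqueness of the positive fixed point and the collapse to the primitive level---remain open, consistent with the paper's decision to leave this as a conjecture.
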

In simple terms, the conjecture says that the commuting states corresponding to powers of primitive glides come from commuting states for the individual primitive glides.

\begin{ex}
 Let $n=3$, $v = s_1 s_2$, and $u = s_2 s_1$. Then $m=m'=2$, $k=1$ and $k'=2$. Take $\ell = \ell' = 1$, $\z = (a,b,c,d)$, $\w = (e,f,g,h)$. We have 
 $$[(s_1(a) s_2(b)) (s_0(c) s_1(d))] [(s_2(e) s_1(f)) (s_0(g) s_2(h))] =$$ $$= [(s_1(e') s_0(f')) (s_2(g') s_1(h'))] [(s_0(a') s_1(b')) (s_2(c') s_0(d'))].$$
 If we now assume that $a=a', \ldots, h=h'$, and that all parameters are positive, it is not hard to check that the only solution is as follows:
  $$[(s_1(d+f) s_2(d)) (s_0(d+f) s_1(d))] [(s_2(d+f) s_1(f)) (s_0(d+f) s_2(f))] =$$ $$= [(s_1(d+f) s_0(f)) (s_2(d+f) s_1(f))] [(s_0(d+f) s_1(d)) (s_2(d+f) s_0(d))].$$
  Thus, we see that $a=c$, $b=d$, $e=g$ and $f=h$, as claimed by the conjecture. Furthermore, the commutation for $\z$ and $\w$ comes from the commutation relations for individual primitive glides:
  $$F((d+f,d),(d+f,f)) = ((d+f,f),(d+f,d)).$$
\end{ex}

\begin{ex}
 Note that if the positivity condition is dropped, the claim of the conjecture becomes false. For example, in the previous example an alternative choice of parameters is as follows:
 $$[(s_1(-d-f) s_2(d)) (s_0(-d+f) s_1(d))] [(s_2(-d-f) s_1(f)) (s_0(d-f) s_2(f))] =$$ $$= [(s_1(-d-f) s_0(f)) (s_2(d-f) s_1(f))] [(s_0(-d-f) s_1(d)) (s_2(-d+f) s_0(d))].$$
 We see that $a \not = c$ and $e \not = g$.
\end{ex}

\begin{question}
 For a fixed choice of $\w$, meaning both the choice of a glide $u$ and of positive parameters, what choices of another glide $v$ can produce $\z$ such that $F(\z,\w)=(\w,\z)$?
\end{question}

\begin{conj}
 Assume we have made a choice of glides $u,v$ and of positive parameters for $\w$, but not of the parameters of $\z$. Then if there exists a choice of $\z$ such that $F(\z,\w)=(\w,\z)$, this choice is unique. 
\end{conj}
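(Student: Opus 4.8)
The plan is to recast the fixed-point condition as a single identity in the group generated by the weighted generators $s_i(a)=\exp(a\epsilon_i)$, and then to prove uniqueness of the positive solution by combining Lusztig's unique factorization with a Perron--Frobenius-type selection of a positive eigenvector. Write $U:=u(\w)=s_{i_1}(w_1)\cdots s_{i_l}(w_l)$ and $V:=v(\z)=s_{j_1}(z_1)\cdots s_{j_m}(z_m)$. Since $F$ is the weighted form of $vu=\rho^{k_2}(u)\rho^{-k_1}(v)$ and does not depend on the chosen sequence of moves \cite{L}, setting $\y'=\w$ and $\z'=\z$ turns $F(\z,\w)=(\w,\z)$ into
\begin{equation} \label{eqFix}
V\,U = \rho^{k_2}(U)\,\rho^{-k_1}(V),
\end{equation}
where $\rho$ now denotes the loop-group automorphism $s_i(a)\mapsto s_{i+1}(a)$ and $k_1,k_2$ are the offsets of $u$ and $v$. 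As $\w$ is given, the element $U$ (and hence $\rho^{k_2}(U)$) is known, and the only unknown is $V$. The first reduction then replaces the tuple $\z$ by the element $V$: by Lusztig's parametrization of the totally positive part \cite{L}, for a fixed reduced word the map $\z\mapsto V=v(\z)$ is injective on positive tuples, so it suffices to prove that \eqref{eqFix} has at most one totally positive solution $V$ factoring along $v$.

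The key structural observation is that \eqref{eqFix} is \emph{linear} in $V$. Indeed, $\rho$ is realized by conjugation by the shift element $\tau$ of the extended affine symmetric group (as in the proof of Lemma \ref{lemRho}), so $\rho(X)=\tau X\tau^{-1}$ is linear in $X$, and \eqref{eqFix} is equivalent to $V=T(V)$ for the linear operator
\begin{equation} \label{eqTrans}
T(X)=\rho^{k_1+k_2}(U)^{-1}\,\rho^{k_1}(X)\,\rho^{k_1}(U)
\end{equation}
on the space of loop matrices. In other words, a compatible carrier is exactly an eigenvector of $T$ with eigenvalue $1$. Because each $\epsilon_i$ is off-diagonal we have $\det s_i(a)=1$, hence $\det V=1$; this determinant normalization fixes the scale and removes the usual projective ambiguity of an eigenvector. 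I would then prove uniqueness by a Perron--Frobenius-type argument: the existence of one positive solution forces $1$ to be the leading eigenvalue of $T$ on an appropriate invariant cone, and uniqueness of the positive eigenvector (up to the scale already fixed by $\det=1$) yields the conjecture.

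The main obstacle is making this Perron--Frobenius step rigorous, and this is precisely where positivity must enter: the Example preceding the conjecture exhibits distinct non-positive solutions, so any correct argument must use the positive cone essentially. The difficulty is that $T$ does not visibly preserve the cone of entrywise-nonnegative loop matrices, since the factor $\rho^{k_1+k_2}(U)^{-1}$ is an inverse and need not be nonnegative, whereas $\rho^{k_1}(U)$ is. One therefore needs either a change of variables or a cone adapted to total positivity on which $T$ (or a suitable power) acts as a positive, primitive operator whose leading eigenvalue is identified with $1$. An equivalent and perhaps more tractable reformulation is to take two solutions $V_1,V_2$ and study $W:=V_2V_1^{-1}$, which \eqref{eqFix} forces to satisfy
\begin{equation} \label{eqFlat}
\rho^{-k_1}(W)=\rho^{k_2}(U)^{-1}\,W\,\rho^{k_2}(U);
\end{equation}
the claim becomes that this shift-twisted flatness condition, together with positivity, forces $W=I$, which one can attempt to prove by grading the loop matrices in the loop parameter and solving coefficient by coefficient.

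Finally, I would pursue in parallel the network and $\tau$-function model of Sections \ref{secNetworkModel}--\ref{secBHZ}, which the excerpt already advertises as a source of commuting state--carrier pairs. There \eqref{eqFix} should translate into a system of discrete bilinear (Hirota-type) relations on the face variables, and positive solutions of such systems are typically unique; this may furnish the cleanest route to the conjecture, bypassing the cone-preservation issue above by working directly with the $\tau$-functions rather than with the operator $T$.
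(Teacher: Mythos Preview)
The statement you are attempting to prove is labeled a \emph{Conjecture} in the paper and is left open there; the paper provides no proof, so there is nothing to compare your argument against. What the paper does do is give a construction (the wire ansatz of Section~\ref{secNetwork}) that produces commuting pairs $(\w,\z_{-\infty})$, together with the remark that this is the only method the authors know; the uniqueness assertion remains unproven.

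Your proposal is not a proof either, as you yourself acknowledge. The reformulation as a fixed-point equation $V=T(V)$ for the linear operator $T(X)=\rho^{k_1+k_2}(U)^{-1}\,\rho^{k_1}(X)\,\rho^{k_1}(U)$ is correct and clean, and the reduction to injectivity of $\z\mapsto v(\z)$ via Lusztig's parametrization is sound. But the Perron--Frobenius step is precisely the content of the conjecture, and you have not carried it out: you note that $T$ does not obviously preserve any natural positive cone because of the factor $\rho^{k_1+k_2}(U)^{-1}$, and you do not identify a cone on which it does. The alternative via $W=V_2V_1^{-1}$ and the flatness equation \eqref{eqFlat} is a reasonable reformulation, but ``solving coefficient by coefficient'' in the loop grading is not an argument until you show that positivity of $V_1,V_2$ translates into a usable constraint on $W$; as stated, $W$ is a ratio of totally positive elements and lies in no evident cone. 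The $\tau$-function route you mention at the end is promising in spirit but is only a pointer, not an argument.

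In short: this is a plausible outline of where a proof might come from, with the central difficulty correctly identified but not resolved. It does not settle the conjecture, and neither does the paper.
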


\section{Network model on a cylinder} \label{secNetworkModel}

The following adaptation of the model introduced in \cite{LP3} will be extremely useful for our purposes. 
Consider an infinite cylinder with $n$ horizontal wires which may cross to form an infinite wiring diagram.

\begin{figure}[ht]
\scalebox{0.6}{\input{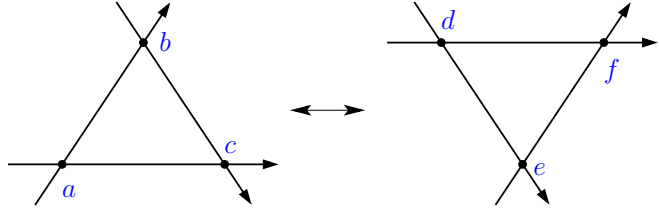}}
    \caption{The Yang-Baxter move}
    \label{fig:sol2}
\end{figure}

Consider a local {\it {Yang-Baxter move}} we do on such a network as shown in Figure \ref{fig:sol2}.  Here we have {\it {Lusztig's relations}}
$$d = {bc}/{(a+c)}, e = a+c, f = {ab}/{(a+c)},$$ $$a = ef/(d+f), b = d+f, c = de/(d+f)$$

Now, assume each wire has a parameter associated to it. Assign a variable to each {\it {chamber}}, which means a face cut out by wires. We shall refer to those variables as {\it {chamber variables}}, or {\it {$\tau$-functions}}. 
\begin{figure}[ht]
\scalebox{0.6}{\input{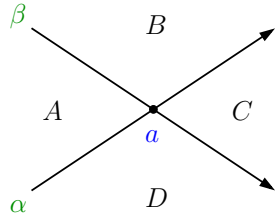}}
    \caption{The transition between the vertex and the chamber variables}
    \label{fig:sol3}
\end{figure}
Assume we have assigned the chamber variables so that 
at any crossing of two wires the variable $a$ at the crossing is given by $$a = (\beta - \alpha) \frac{BD}{AC},$$ where $\alpha$ and $\beta$ are the wire parameters and $A,B,C,D$ are the chamber variables, as shown in Figure \ref{fig:sol3}. 
Note that the orientation of the wires matters for determining the factors in this formula. 

\begin{figure}[ht]
\scalebox{0.6}{\input{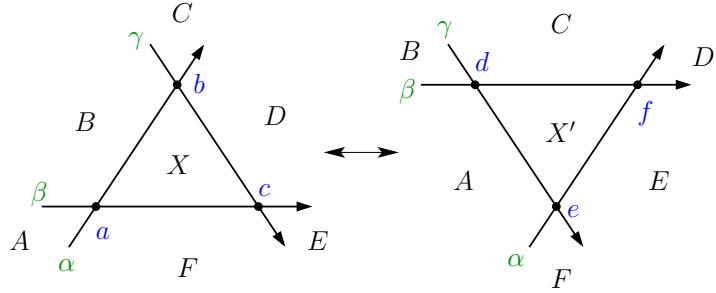}}
    \caption{The enriched Yang-Baxter move}
    \label{fig:sol4}
\end{figure}
Consider an enriched Yang-Baxter move, which evolves chamber variables by leaving all of them but one the same, and changing the mutating variable $X$ according to the formula 
$$(\gamma-\alpha)XX' = (\beta-\alpha)BE+(\gamma-\beta)AD,$$ with the parameters assigned as in Figure \ref{fig:sol4}. 
\begin{lem} \label{lem:YB}
 The enriched Yang-Baxter move induces the usual Yang-Baxter move on vertex variables. 
\end{lem}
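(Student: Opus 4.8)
The plan is to convert both the left (``before'') and right (``after'') halves of Figure~\ref{fig:sol4} into explicit rational expressions for the six vertex variables $a,b,c$ and $d,e,f$ in terms of the seven chamber variables $A,B,C,D,E,F,X$ (resp.\ $X'$) and the three wire parameters $\alpha,\beta,\gamma$, and then to verify Lusztig's relations \eqref{eqLusztig} by direct substitution. Since the ``before'' vertices are computed from the $\tau$-functions of the left diagram and the ``after'' vertices from those of the right diagram, the content of the lemma is precisely that the single face update $X \mapsto X'$ converts the one set of vertex values into the other according to \eqref{eqLusztig}.

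First I would apply the defining rule $a=(\beta-\alpha)BD/(AC)$ of Figure~\ref{fig:sol3} at each of the six crossings. The only subtlety is orientation: at a given crossing the two faces lying across the vertex in the ``top/bottom'' direction enter the numerator while the ``left/right'' faces enter the denominator, and the prefactor is the difference of the parameters of the two wires meeting there. To decide which labelled face sits in each slot I would sweep each diagram by vertical slices between consecutive crossings, recording level by level which face occupies each horizontal band. This makes it transparent that the six outer faces $A,\dots,F$ and the three parameters are shared by the two diagrams, that the central triangle is the only face that is modified (it equals $X$ on the left and $X'$ on the right), and that the central face sits in the denominator of two of the three vertices and in the numerator of the remaining one. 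Carrying this out should yield expressions of the form
\begin{align*}
a &= (\beta-\alpha)\frac{BF}{AX}, & b &= (\gamma-\alpha)\frac{CX}{BD}, & c &= (\gamma-\beta)\frac{DF}{XE},\\
d &= (\gamma-\beta)\frac{AC}{BX'}, & e &= (\gamma-\alpha)\frac{X'F}{AE}, & f &= (\beta-\alpha)\frac{CE}{X'D}.
\end{align*}

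With these in hand the verification is mechanical. The two multiplicative relations $de=bc$ and $ef=ab$ (equivalently $d=bc/(a+c)$ and $f=ab/(a+c)$ once $e=a+c$ is established) hold as identities in the chamber variables, because in each product the factors $X$ and $X'$ cancel: one checks that $de$ and $bc$ both reduce to $(\gamma-\alpha)(\gamma-\beta)CF/(BE)$, while $ef$ and $ab$ both reduce to $(\beta-\alpha)(\gamma-\alpha)CF/(AD)$. The single relation where the enriched move is actually used is $e=a+c$: combining the two ``before'' terms over the common factor $F/X$ gives
\[
a+c=\frac{F}{XAE}\bigl[(\beta-\alpha)BE+(\gamma-\beta)AD\bigr],
\]
and substituting the enriched update $(\gamma-\alpha)XX'=(\beta-\alpha)BE+(\gamma-\beta)AD$ collapses the bracket to $(\gamma-\alpha)XX'$, leaving $a+c=(\gamma-\alpha)X'F/(AE)=e$, as desired.

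The hard part is not the algebra but the geometric bookkeeping of the first step: reading the orientation correctly at every one of the six crossings so that the numerator/denominator split is right, and confirming that the outer faces are genuinely identified across the two diagrams with the central face as the only thing that mutates. Once that dictionary is fixed the three relations follow as above; and since two of them are automatic from the chamber formulas while the third is exactly the enriched update, the enriched Yang--Baxter move induces precisely Lusztig's relations \eqref{eqLusztig} on the vertex variables, which is the assertion of the lemma.
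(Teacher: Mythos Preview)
Your proof is correct and follows essentially the same route as the paper: write the six vertex variables as double ratios of chamber variables via the rule of Figure~\ref{fig:sol3}, then verify Lusztig's relations by direct substitution.  The paper checks $d=bc/(a+c)$ in one line and leaves $e,f$ to the reader; your organization is a bit tidier in that you isolate exactly where the enriched update is needed---the products $de=bc$ and $ef=ab$ hold identically in the chamber variables since $X$ and $X'$ cancel, and only $e=a+c$ actually invokes the mutation $(\gamma-\alpha)XX'=(\beta-\alpha)BE+(\gamma-\beta)AD$---but the underlying computation is the same.
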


\begin{proof}
 Let us verify the formula for the new vertex variable $d$, variables $e$ and $f$ can be treated in the same manner. We have 
 $$d = (\gamma-\beta)\frac{AC}{BX'} = \frac{(\gamma-\alpha)(\gamma-\beta)ACX}{B((\beta-\alpha)BE + (\gamma-\beta)AD)} = \frac{(\gamma-\alpha)\frac{CX}{BD}(\gamma-\beta)\frac{DF}{EX}}{(\beta-\alpha)\frac{FB}{AX} + (\gamma-\beta)\frac{FD}{EX}} = \frac{bc}{a+c}.$$
\end{proof}

\section{Dynamics in networks} \label{secNetwork}
An infinite wiring diagram with $n$ wires on a cylinder is a pictorial representation of an infinite reduced word in $\hat{S}_n$.  Away from crossings the wires run along $n$ positions.  A factor $s_i$ in a reduced word is depicted by a crossing of the wires in positions $i$ and $i+1$ (all indices being taken modulo $n$).  We draw all pictures with the cylinder running from left to right and with the positions numbered $1$ through $n$ from bottom to top along the front of the cylinder.  The crossing of wires $n$ and $1$, corresponding to an $s_0$, take place in the back of the cylinder.  Figure \ref{fig:sol8} shows the wiring diagram for the word $s_3s_2s_1s_0s_1s_2s_3s_2s_1s_0s_1s_2s_3s_2s_1$ with $n=4$.

Given a state word $u$ with offset $k_1$, we can draw the wiring diagram for the infinite reduced word 
\begin{displaymath}
\cdots \rho^{3k_1}(u) \rho^{2k_1}(u) \rho^{k_1}(u) u \rho^{-k_1}(u)     \cdots.
\end{displaymath}
We can also insert a rotation of a carrier word $v$ and keep pushing it through the rotations of $u$ using braid moves.  The vertex weights transform under each braid move according to Lusztig's relations.  As such, the entire sweep of $v$ from left to right transforms the vertex weights according to affine dKdV.

\begin{figure}[ht]
\scalebox{0.6}{\input{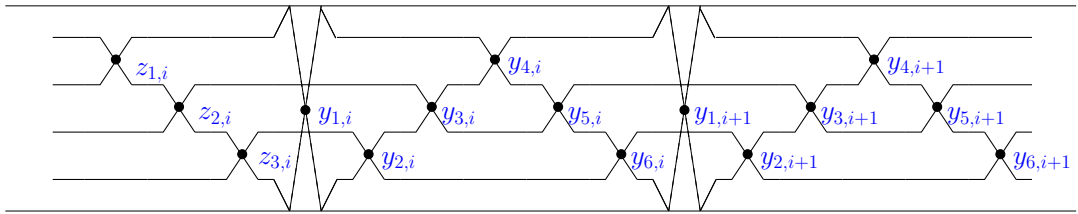}}
    \caption{A carrier $\z_i$ about to interact with the state $\y_i$.}
    \label{fig:sol8}
\end{figure}

\begin{ex} \label{ex:wire1}
 Take $n=4$, $v=s_3s_2s_1$ and $u = s_0s_1s_2s_3s_2s_1$. Then Figure \ref{fig:sol8} shows the moment when the carrier  $\z_i$ is about to interact with the state $\y_i$. The interaction is given by the following sequence of braid moves:
 $$s_3 s_2 \underline{s_1  s_0 s_1} s_2 s_3 s_2 s_1 \mapsto s_3 \underline{s_2 s_0} s_1 \underline{s_0 s_2} s_3 s_2 s_1 \mapsto s_3 s_0 \underline{s_2 s_1 s_2} s_0 s_3 s_2 s_1 \mapsto s_3 s_0 s_1 s_2 s_1 s_0  s_3 s_2 s_1.$$
 The realization of the resulting state on a cylinder is shown in Figure \ref{fig:sol9}. 
 \begin{figure}[ht]
\scalebox{0.6}{\input{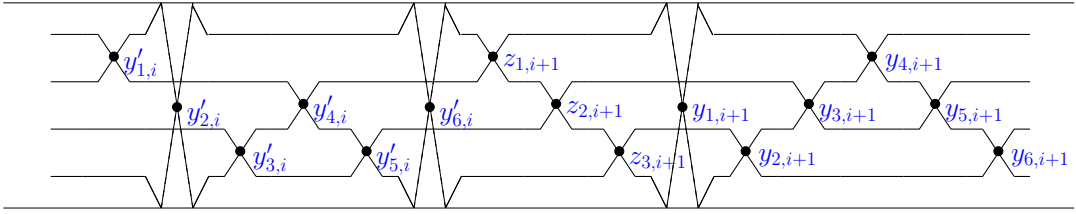}}
    \caption{The movement right after the interaction of the carrier $\z_i$ and the state $\y_i$.}
    \label{fig:sol9}
\end{figure}
 The new parameters are related to the old ones via 
 $$y_{1,i}' = z_{1,i}, \; y_{2,i}' = \frac{y_{1,i}y_{2,i}}{z_{3,i}+y_{2,i}},\; y_{3,i}'=\frac{(z_{3,i}+y_{2,i})y_{3,i}}{z_{2,i}+y_{3,i}},\; y_{4,i}' = z_{2,i}+y_{3,i},$$
 $$y_{5,i}' = \frac{(z_{3,i}+y_{2,i})z_{2,i}}{z_{2,i}+y_{3,i}}, \; y_{6,i}' = \frac{y_{1,i}z_{3,i}}{z_{3,i}+y_{2,i}},\; z_{1,i+1} = y_{4,i},\; z_{2,i+1}=y_{5,i},\; z_{3,i+1}=y_{6,i}.$$
\end{ex}

Now, we describe a method allowing one to produce a commuting vacuum state and initial carrier. We call this method {\it {wire ansatz}}, since we use wire parameters. This method requires a consistent way to number the wires themselves, and not just the positions they occupy.  To this end, define a \emph{cut} in a reduced word (finite or infinite) to be a division of the word into two pieces.  For instance $s_1s_0|s_1s_2s_1s_0$ is a reduced word with a cut.  Given such an object, number the wires in its wiring diagram so that at the point of the cut wire $i$ is in position $i$ for all $i$ from $1$ to $n$.

Now let $u$ and $v$ be a state word and carrier word respectively.  Consider the wiring diagram for $v|u$ and assign parameters $\alpha_1, \ldots, \alpha_n$ to the $n$ wires according to the numbering method just described.  Let $\z_{-\infty}$ be a carrier giving the vertex weights for $v$ and let $\w$ be a state giving the vertex weights for $u$.

\begin{lem} \label{lemWireAnsatz}
Assume the crossing parameters in both $\z_{- \infty}$ and $\w$ satisfy $a = \beta - \alpha$, where $a$ is the parameter at a crossing of wires with parameters $\beta$ and $\alpha$, see Figure \ref{fig:sol3}. Then $F(\z_{- \infty}, \w) = (\w, \z_{- \infty}).$
Furthermore, the new parameters are again given by formulas $a = \beta - \alpha$.
\end{lem}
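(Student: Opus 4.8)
The plan is to recognize the wire ansatz as the constant-$\tau$ specialization of the chamber-variable formalism of Section~\ref{secNetworkModel} and then to feed this into Lemma~\ref{lem:YB}. Concretely, the hypothesis $a=\beta-\alpha$ says exactly that the chamber-to-vertex formula $a=(\beta-\alpha)BD/(AC)$ of Figure~\ref{fig:sol3} holds with $BD/(AC)=1$ at every crossing, which is guaranteed if we realize the weights of $v\,|\,u$ using the wire parameters $\alpha_1,\dots,\alpha_n$ together with all chamber variables set equal to a single constant. Since the map $F$ depends only on the vertex weights and not on the auxiliary chamber variables, computing $F(\z_{-\infty},\w)$ in this particular realization loses no generality.

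First I would verify that a constant chamber configuration is a fixed point of the enriched Yang--Baxter move. Setting $A=B=D=E=X=t$ in $(\gamma-\alpha)XX'=(\beta-\alpha)BE+(\gamma-\beta)AD$ gives $(\gamma-\alpha)tX'=(\beta-\alpha)t^2+(\gamma-\beta)t^2=(\gamma-\alpha)t^2$, so $X'=t$ and the mutating chamber remains constant. A single sweep of the carrier through the states is a composition of enriched Yang--Baxter moves and commutations, each of which preserves a constant chamber configuration; hence the chambers stay constant throughout. By Lemma~\ref{lem:YB} the underlying vertex weights then evolve by the ordinary Yang--Baxter moves, i.e.\ exactly by $F$, and since the final chambers are still constant the output weights are again of the form $a=\beta-\alpha$. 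This establishes the ``furthermore'' clause and shows that the output is the wire-ansatz weight assignment of the diagram $\rho^{k_2}(u)\,|\,\rho^{-k_1}(v)$ produced by Lemma~\ref{lemRho}.

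It remains to identify this output with the pair $(\w,\z_{-\infty})$ itself. The key point is that the parameters $\alpha_i$ are attached to the $n$ topological strands of the diagram, which the moves only slide past one another. I would argue that the arrangement of strands entering $\rho^{k_2}(u)$ in the output diagram is the rigid upward rotation by $k_2$ positions of the arrangement entering $u$ in the input diagram, while the wiring of $\rho^{k_2}(u)$ is the same rotation of the wiring of $u$; consequently the two sub-diagrams differ by a rigid rotation of the cylinder, which fixes every wire label, so their $h$-th crossings are incident to the same pair of strands with the same orientation and therefore carry equal weights $\beta-\alpha$. This gives $\y'=\w$, and the analogous statement for $\rho^{-k_1}(v)$ versus $v$ gives $\z'=\z_{-\infty}$, for \emph{arbitrary} $\alpha_1,\dots,\alpha_n$. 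I expect the main obstacle to be exactly this bookkeeping: one must track how $\rho^{k_2}$ relabels positions relative to the numbering fixed at the cut and, in particular, verify that the orientation convention of Figure~\ref{fig:sol3} is genuinely rotation-equivariant at the wrap-around crossings coming from $s_0$, so that the signs telescope correctly (as one checks they do, for instance, in Example~\ref{exF}).
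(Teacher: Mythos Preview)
Your argument is correct, but the paper takes a much shorter and more direct route. Rather than passing through the chamber-variable formalism and Lemma~\ref{lem:YB}, the paper simply plugs the wire ansatz into a single Lusztig move and checks by hand that it is preserved: if $a=\beta-\alpha$, $b=\gamma-\alpha$, $c=\gamma-\beta$, then $a+c=\gamma-\alpha=b$ and hence
\[
a'=\frac{bc}{a+c}=c=\gamma-\beta,\qquad b'=a+c=\gamma-\alpha,\qquad c'=\frac{ab}{a+c}=a=\beta-\alpha,
\]
which are exactly the wire-ansatz values at the three crossings after the move. Since $F$ is a composition of such moves (and trivial commutations), the property propagates.

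Your approach is a legitimate alternative: you realize the wire ansatz as the constant-$\tau$ specialization of the chamber model, observe that the enriched Yang--Baxter move fixes constant chambers, and then invoke Lemma~\ref{lem:YB} to descend to vertex weights. This is conceptually nice because it exhibits the wire ansatz as the ``trivial'' BHZ datum, foreshadowing Section~\ref{secBHZ}. The cost is that you are invoking more machinery (Lemma~\ref{lem:YB} itself already contains a computation of the same flavor as the paper's three-line check), so the net argument is longer. The bookkeeping you flag in the last paragraph---matching the $h$-th crossing of $\rho^{k_2}(u)$ with the $h$-th crossing of $u$ via a rigid rotation of the cylinder---is indeed needed for the conclusion $F(\z_{-\infty},\w)=(\w,\z_{-\infty})$ rather than merely the ``furthermore'' clause; the paper leaves this implicit as well, so you are not missing anything the paper supplies.
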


\begin{proof}
 It suffices to check that a single Lusztig move preserves the property of each crossing being given by $a = \beta - \alpha$ for the corresponding wire parameters. Indeed, if $a = \beta-\alpha, b = \gamma-\alpha, c=\gamma-\beta$, then $a+c=b$ and 
 $$a' = bc/(a+c) = c = \gamma-\beta, \; b' = a+c = b = \gamma -\alpha, \; c' = ab/(a+c) = a = \beta-\alpha,$$ as desired. 
\end{proof}

Consider the wiring diagram for $v|u$ and assign wire parameters $\alpha_1,\ldots, \alpha_n$.  Call the \emph{upper} wire of a crossing the one that passes from top to bottom as it crosses the other (e.g. the one labeled $\beta$ in Figure \ref{fig:sol3}).  For two wire labels $i,j$, write $i \triangleleft j$ if the wires cross with $j$ being the upper wire of the crossing.  We want all the vertex parameters in Lemma \ref{lemWireAnsatz} to be positive.  This condition holds precisely if $\alpha_i < \alpha_j$ whenever $i \triangleleft j$.

\begin{lem} \label{lem:pos}
 The parameters $\alpha_1, \ldots, \alpha_n$ can always be chosen so that $\alpha_i < \alpha_j$ whenever $i \triangleleft j$.  
\end{lem}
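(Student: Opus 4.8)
The plan is to build the labels $\alpha_i$ out of a single integer invariant of each wire and thereby reduce the whole statement to the reducedness of $vu$. First I would pass to the universal cover of the cylinder in the vertical (position) direction, turning the diagram for $v|u$ into an honest wiring diagram on an infinite strip, in which each wire $i$ lifts to a $\mathbb{Z}$-family of strands, consecutive lifts differing by a vertical shift of $n$. The one structural fact I rely on is the standard characterization that, because $vu$ is reduced, no two strands of this strip diagram cross more than once. (Glideness of $u$ and $v$ enters only through the standing hypothesis that $vu$ is reduced; it plays no further role.)

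For each wire $i$ let $\delta_i \in \mathbb{Z}$ be the net change in height of a lift of wire $i$ between the far left and the far right of the diagram. Since the lifts of a given wire differ by a vertical translation, $\delta_i$ is independent of the chosen lift, hence a well-defined invariant of the wire. The key claim is that $i \triangleleft j$ forces $\delta_i > \delta_j$. To see this, take a crossing witnessing $i \triangleleft j$, occurring between a lift of $i$ (heights $h_L(i), h_R(i)$ at the two ends) and a lift of $j$ (heights $h_L(j), h_R(j)$); at this crossing the $j$-strand is the upper one, so it passes from above the $i$-strand on the left to below it on the right. Because these two particular strands cross \emph{only} here, the $j$-strand stays above the $i$-strand everywhere to the left and below it everywhere to the right, giving $h_L(j) > h_L(i)$ and $h_R(i) > h_R(j)$. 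Adding these, $\delta_i - \delta_j = (h_R(i) - h_R(j)) + (h_L(j) - h_L(i)) > 0$, and since the quantities are integers the gap is in fact at least $2$, so the inequality is strict.

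Granting the claim I would simply set $\alpha_i = -\delta_i$, so that $i \triangleleft j$ gives $\alpha_i = -\delta_i < -\delta_j = \alpha_j$, which is exactly the required condition; wires that never cross may harmlessly receive equal labels, and in particular $\triangleleft$ is automatically acyclic. I expect the only delicate point to be the claim itself, specifically the step that upgrades the purely local ``upper/lower'' data at one crossing to a \emph{global} statement about the left/right ordering of the two strands. This is precisely where ``no two strands cross more than once'' is indispensable: a second crossing of the same pair would be exactly the mechanism allowing both $i \triangleleft j$ and $j \triangleleft i$, which would make $\delta$ incoherent and obstruct any valid assignment of the $\alpha_i$.
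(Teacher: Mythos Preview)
Your argument is correct and takes a more elementary route than the paper. The paper defers this lemma to Section~\ref{sec:prop}, where it first proves Proposition~\ref{propPositivity}: in the alcove model the crossing parameters of $|w$ are all positive if and only if the trajectory $t(w)$ lies in the Weyl chamber $\pi(\mathfrak{C}_{id})$ corresponding to the ordering of the $\alpha_i$. The lemma then follows by choosing $\pi$ so that $t(vu)\in\pi(\mathfrak{C}_{id})$ and picking any $\alpha_{\pi(1)}>\cdots>\alpha_{\pi(n)}$, finally permuting to account for the shift from $|vu$ to $v|u$.

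Your height-change invariant $\delta_i$ is in essence the same data as the trajectory components (up to sign and the cut permutation), and your key step---that reducedness forces a single crossing, hence a globally consistent above/below relation, hence $\delta_i>\delta_j$ whenever $i\triangleleft j$---is the combinatorial heart that the paper extracts instead from monotonicity of $(\alpha_{ij},x)$ along the alcove walk. What you gain is self-containment: you never need the alcove model, the trajectory, or the chamber decomposition, and you produce explicit values $\alpha_i=-\delta_i$ rather than merely an ordering. What the paper gains is that Proposition~\ref{propPositivity} is a reusable two-way characterization, invoked again later (Propositions~\ref{propStateDirection} and~\ref{propCarrierDirection}) to control soliton speeds; your argument, being tailored to one direction of the implication, does not immediately furnish that.
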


The proof is postponed until Section \ref{sec:prop}.

\begin{ex}
 In the Example \ref{ex:wire1} assume $\alpha_1 > \alpha_3 = \alpha_2 > \alpha_4$. Then the choice of parameters $$z_{1,i} = \alpha_1 - \alpha_4, \; z_{2,i}=\alpha_1-\alpha_3, \; z_{3,i}=\alpha_1-\alpha_2,\; y_{1,i}=\alpha_1-\alpha_4, \; y_{2,i} = \alpha_2 - \alpha_4, \; \text{etc.}$$
 results in commuting carrier  and state: $z_{j,i+1} = z_{j,i}, y_{j,i}'=y_{j,i}$. Furthermore, all parameters involved are strictly positive. 
\end{ex}

The wire ansatz is the only way we know to construct commuting pairs $\w, \z_{-\infty}$.  In fact it may be true that for typical choices of $u,v$ all such commuting pairs arise in this way.  As such, for the remainder of the paper we reformulate our setup, assuming wire weights $\alpha_1,\ldots, \alpha_n$ as independent parameters.  Once these are chosen, we can calculate $\w$ and $\z_{-\infty}$ and then define affine dKdV exactly as before.  The new setup is summarized in Table \ref{tabForm2}.

\begin{table}
\begin{tabular}{|l|l|l|}
\hline
Name & Notation & Properties \\
\hline
State word & $u = s_{i_1} \cdots s_{i_l} \in \hat{S_n}$ & $u$ a glide\\
Carrier word & $v = s_{j_1} \cdots s_{j_m} \in \hat{S_n}$ & $v$ a glide, $vu$ reduced \\
Wire weights & $\alpha_1,\ldots, \alpha_n \in \mathbb{R}$ & $i \triangleleft j \Rightarrow \alpha_i < \alpha_j $\\
\hline
\end{tabular}
\caption{The parameters for the second formulation of affine dKdV.  The entries of the vacuum $\w$ and the initial carrier $\z_{-\infty}$ are computed from $\alpha_1,\ldots, \alpha_n$ as described in Lemma \ref{lemWireAnsatz}.}
\label{tabForm2}
\end{table}

\section{Chamber variables as $\tau$-functions} \label{secBHZ}

\subsection{Chamber weights}

We recall the following concepts from \cite{BZ}. The notation and terminology is close to that of \cite{LP2}.

The affine symmetric group $\hat{S}_n$ acts on $\mathbb Z$ in the following natural way:
$$
s_i(a) = 
\begin{cases}
a+1 & \text{if $a \equiv i \mod n$;}\\
a-1 & \text{if $a \equiv i+1 \mod n$;}\\
a & \text{otherwise.}
\end{cases}
$$
This induces an action on subsets of $\mathbb Z$, including infinite ones. Let $\Z_{\leq a}$ be the set of all integers no larger than $a$. 
 Let us call a subset $S \subset \Z$ {\it $a$-nice} if it occurs as $S =
w(\Z_{\leq a})$ for some $w \in \aW$.  
Clearly a subset can be $a$-nice for at
most one $a$.  
The following lemma is easy to verify.

\begin{lem} \label{lem:nice}
 A set $S \subset \Z$ is $a$-nice for some $a$ if and only if the following three conditions hold:
 \begin{enumerate}
  \item if $b \in S$, then $b-n \in S$;
  \item $|S \cap \Z_{>0}| < \infty$;
  \item $|\Z_{<0} \setminus S| < \infty$.
 \end{enumerate}
\end{lem}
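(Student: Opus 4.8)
The plan is to prove both implications directly, using the description of $\aW=\hat S_n$ as bijections $w:\Z\to\Z$ with $w(x+n)=w(x)+n$ and window sum $\sum_{x=1}^n(w(x)-x)=0$. Two elementary consequences of $w(x+n)=w(x)+n$ carry most of the argument: $w$ commutes with the shift $x\mapsto x-n$ (this holds for each generator $s_i$ by inspection of the action, hence for all of $\aW$), and $w(x)-x$ is $n$-periodic, hence bounded, say $|w(x)-x|\le M$ for all $x$.

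For the forward direction suppose $S=w(\Z_{\leq a})$. Since $w$ commutes with subtracting $n$ and $\Z_{\leq a}$ is closed under this operation, so is $S$, which is condition (1). For (2), every element of $S$ is $w(c)$ with $c\le a$, and $w(c)>0$ forces $c>w(c)-M>-M$; thus only the finitely many $c\in(-M,a]$ can contribute, and injectivity of $w$ bounds $|S\cap\Z_{>0}|$. Condition (3) is the mirror statement applied to the complement $\Z\setminus S=w(\Z_{>a})$: a value $w(c)<0$ with $c>a$ forces $c<w(c)+M<M$, so again only finitely many $c$ occur.

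For the converse, assume (1)--(3) and fix a residue class $r$ modulo $n$. Condition (1) shows $S\cap(r+n\Z)$ is closed under subtracting $n$, (2) shows it is bounded above, and (3) shows it is nonempty; hence it is exactly a downward ray $\{M_r,M_r-n,M_r-2n,\ldots\}$ with $M_r\equiv r\pmod n$. The set $\Z_{\leq a}$ has the same shape, meeting class $s$ in the ray below its top element $\lambda_s(a)$, the largest integer $\le a$ in that class. I would then define $w$ class by class as the translation carrying the ray of $\Z_{\leq a}$ onto the ray of $S$ in the \emph{same} class, namely $w(x)=x+(M_{[x]}-\lambda_{[x]}(a))$ where $[x]$ is the residue of $x$. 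Each increment $M_r-\lambda_r(a)$ is a multiple of $n$, so $w$ commutes with $x\mapsto x+n$, is a bijection of $\Z$, and satisfies $w(\Z_{\leq a})=S$ by construction; concretely $w$ is the translation $t_\mu$ with $\mu=(c_1,\ldots,c_n)$ and $c_r=(M_r-\lambda_r(a))/n\in\Z$.

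The one genuine obstacle is membership $w\in\aW$, which is exactly what pins down $a$. Since $w$ is a translation, it lies in $\aW$ iff its vector $\mu$ lies in the root lattice, i.e. iff $\sum_r c_r=0$, equivalently $\sum_r M_r=\sum_r\lambda_r(a)$. Because the thresholds $\lambda_s(a)$ run over the $n$ consecutive integers $a-n+1,\ldots,a$, summing to $na-\binom n2$, this forces $a=\tfrac1n\big(\sum_r M_r+\binom n2\big)$, consistent with the uniqueness of $a$ noted before the lemma. It remains to check this $a$ is an integer: from $M_r\equiv r\pmod n$ we get $\sum_r M_r\equiv\binom n2\pmod n$, hence $\sum_r M_r+\binom n2\equiv 2\binom n2=n(n-1)\equiv0\pmod n$. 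With $a$ equal to this integer the window sum vanishes, so $w\in\aW$ and $S=w(\Z_{\leq a})$ is $a$-nice. I expect this integrality and window bookkeeping to be the only delicate point; everything else reduces to the boundedness of $w(x)-x$ and the ray structure forced by (1)--(3).
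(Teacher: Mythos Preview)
Your argument is correct. The paper does not actually supply a proof of this lemma; it simply declares it ``easy to verify,'' so there is no approach to compare against. Your write-up fills in the details cleanly: the forward direction via the bound $|w(x)-x|\le M$ is standard, and for the converse your choice to realize $w$ as a pure translation (rather than a general affine permutation) is efficient, since it reduces membership in $\aW$ to the single root-lattice condition $\sum_r c_r=0$, which in turn pins down $a$ uniquely and makes the integrality check routine.
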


A {\it chamber weight} is an extremal weight of a fundamental
representation.  Every chamber weight is of the form $w \cdot
\omega_a$ where $w \in \aW$, and $\omega_a$ is a fundamental weight
of $\widehat{sl}(n)$.  Chamber weights in the orbit of $\omega_a$
are in bijection with $a$-nice subsets, via $w \cdot \omega_a
\leftrightarrow w(\Z_{\leq a})$.  

It shall also be convenient for us to identify each chamber weight $S$ with a lattice element $[S] = (\s_1, \ldots, \s_n) \in \Z^n$ as follows:
$$\s_i = \left\lceil\frac{\max(b \in S \mid b \equiv i \mod n)}{n}\right\rceil,$$ where $\lceil x\rceil$ denotes the smallest integer greater than or equal to $x$.
For example, $[\Z_{\leq -1}] = (0, \ldots, 0,-1)$, while $[\Z_{\leq n+1}] = (2,1, \ldots, 1)$.

Assume we are given a cylindric wiring diagram $\W$, corresponding to some biinfinite reduced word. Consider the universal cover $\bar \W$ of $\W$, which a wiring diagram with infinitely many wires, periodic under vertical shift. 
Figure \ref{fig:sol11} shows part of the universal cover of a wiring diagram. 

\begin{figure}[ht]
\scalebox{0.7}{\input{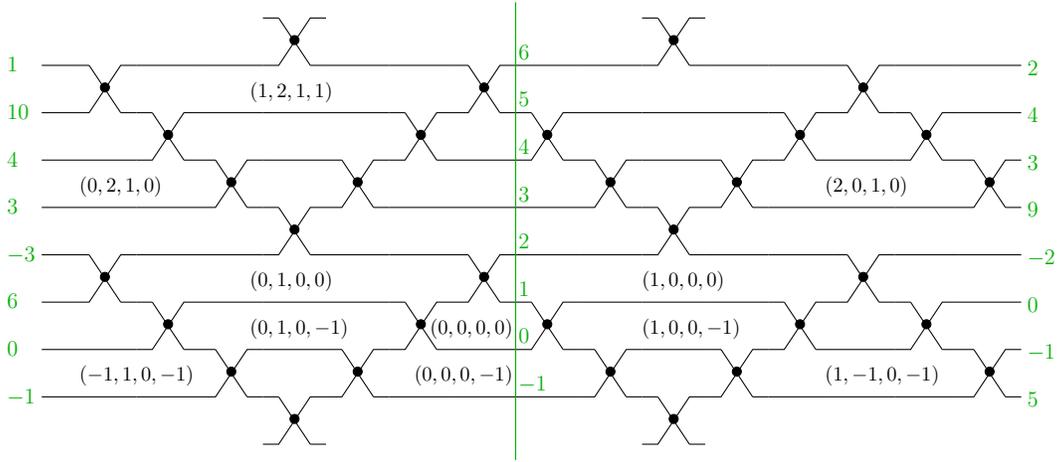}}
    \caption{The lifting of a wiring diagram to universal cover; wires are labeled with $\Z$; for some chambers $S$ lattice elements $[S]$ are shown.}
    \label{fig:sol11}
\end{figure}

Draw a vertical line and assign to wires labels from $\Z$ according to the order in which they cross the line. The wires divide the plane into {\it {chambers}}, see \cite{BFZ}. Assign to each chamber the set $S$ of wires that pass below it.
Similarly, by keeping the labels attached to wires, we can assign a set $S$ to any chamber obtained from original $\bar \W$ by a sequence of braid moves on $\W$. The following lemma is obvious from Lemma \ref{lem:nice}.

\begin{lem}
 The resulting sets are $a$-nice, where the $a$ is determined by the horizontal level of the chamber. 
\end{lem}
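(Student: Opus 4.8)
The plan is to reduce everything to the three conditions of Lemma \ref{lem:nice}, by first identifying the chamber set $S$ with $\pi(\Z_{\le a})$ for a suitable affine permutation $\pi$, and then observing that all three conditions are forced by the periodicity of the universal cover.

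First I would fix the chamber and pass to a vertical cross-section through it. Reading off the wires from bottom to top defines a bijection $\pi : \Z \to \Z$, where $\pi(h)$ is the label of the wire occupying height $h$ at this cross-section; at the reference line used to assign the labels one has $\pi = \mathrm{id}$, and in general $\pi$ records the rearrangement produced by all crossings between the reference line and the chamber. By construction the chamber at horizontal level $a$ has below it exactly the wires sitting at heights $\le a$ along the cross-section, so $S = \pi(\Z_{\le a})$. Because one never crosses a wire while staying inside a single chamber, this description is independent of where in the chamber the cross-section is taken, so both $S$ and the level $a$ are well defined.

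Next I would record the one structural fact that is needed: the universal cover $\bar\W$ is invariant under the deck transformation, a vertical shift by one period that sends the wire labeled $b$ to the wire labeled $b+n$ and raises heights by $n$. Applying this to the cross-section gives $\pi(h+n) = \pi(h)+n$ for all $h$, i.e. $\pi$ is an affine permutation. This single identity yields every hypothesis of Lemma \ref{lem:nice}. For condition (1), if $b = \pi(h) \in S$ with $h \le a$, then $b-n = \pi(h-n)$ with $h-n \le a$, so $b-n \in S$. For conditions (2) and (3), the relation $\pi(h+n)=\pi(h)+n$ forces $\pi(h) \to -\infty$ as $h \to -\infty$ and $\pi(h)\to +\infty$ as $h \to +\infty$, with the same holding for $\pi^{-1}$. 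Hence only finitely many $h \le a$ satisfy $\pi(h) > 0$, giving $|S \cap \Z_{>0}| < \infty$; and only finitely many $b < 0$ satisfy $\pi^{-1}(b) > a$, giving $|\Z_{<0}\setminus S| < \infty$. By Lemma \ref{lem:nice}, $S$ is $a$-nice, and $a$ is exactly the horizontal level of the chamber.

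I expect the only real care to be in the first step: pinning down the identification $S = \pi(\Z_{\le a})$, checking that it is unaffected by braid moves (each of which only permutes finitely many crossings locally, hence only alters $\pi$ by composing with finitely many generators) and that the level $a$ is intrinsic to the chamber. Once that bookkeeping is in place the arithmetic is immediate, since every condition of Lemma \ref{lem:nice} is a direct consequence of the single relation $\pi(h+n)=\pi(h)+n$.
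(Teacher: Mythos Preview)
Your proposal is correct and follows exactly the route the paper intends: the paper does not give a proof at all but simply declares the lemma ``obvious from Lemma~\ref{lem:nice}'', and what you have written is a careful verification of the three conditions of that lemma using the periodicity $\pi(h+n)=\pi(h)+n$ of the universal cover. There is no meaningful difference in approach; you have just filled in the details the authors chose to omit.
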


From now on, we shall identify the chambers of $\bar \W$ at any stage of time evolution with the corresponding chamber weights. We label some of the chambers $S$ with their lattice elements $[S]$ in Figure \ref{fig:sol11}.  
Note that the lattice elements of two chambers obtained from each other by vertical shift differ by the vector $(1,1,1,\ldots,1)$.

The chamber labels can be used to define an important statistic of a glide, which we call its trajectory.  Let $u$ be a glide and draw the wiring diagram for $|u$ (i.e. $u$ with the cut all the way to the left).  Let $(\s_1,\ldots,\s_n)$ be the label of the chamber between wires $1$ and $2$ at the leftmost point of the wiring diagram, and let $(\s_1',\ldots,\s_n')$ be the label of the chamber between the same two wires at the rightmost point.  Say the \emph{trajectory} of $u$ is the difference 
\begin{displaymath}
t(u) = (\s_1'-\s_1,\ldots, \s_n'-\s_n).
\end{displaymath}

\begin{ex} \label{exTrajectory}
Let $n=3$ and $u=s_1s_2s_0s_2$.  The wiring diagram for $|u$ is given in Figure \ref{figTrajectory}.  Between wires $1$ and $2$ we have the labels $(1,0,0)$ at the left of the diagram and $(0,1,0)$ at the right, so $t(u) = (-1,1,0)$.  Note that the face labels, and hence the trajectory, are only defined modulo the vector $(1,1,1)$.
\end{ex}

\begin{figure}
\scalebox{1.}{\input{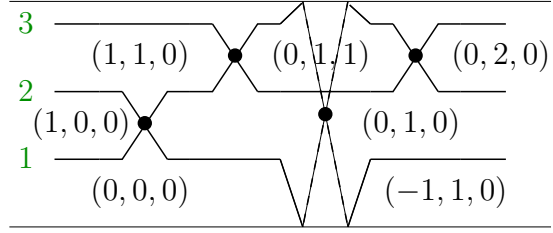}}

\caption{The wiring diagram used to calculate the trajectory of $u=s_1s_2s_0s_2$}
\label{figTrajectory}
\end{figure}

There is a simple description in terms of trajectories of how the chamber labels change under the time evolution.  Let $u,v$ be glides as in the setup for affine dKdV.  Let $\tilde{v} = \rho^{-k_2}(v)$ where $k_2$ is the offset of $v$.  Let $\y_i^0=(y_{1,i}^0,\ldots, y_{l,i}^0)$ be the initial states for $i \in \mathbb{Z}$.  For $m > 0$ let $\y_i^m$ with $i \in \mathbb{Z}$ be the states after $m$ steps of the time evolution.

\begin{prop} \label{propTrajectory}
Fix $m \geq 0$ and $i \in \mathbb{Z}$.  Let $\Gamma$ be the initial network used to run affine dKdV and let $\Gamma'$ be the network after $m$ time steps.  Then there is a directed graph isomorphism of $\Gamma$ and $\Gamma'$ carrying the vertex of $\Gamma$ labeled $y_{j_0,i_0}^0$ to the vertex of $\Gamma'$ labeled $y_{j_0,i_0+i}^m$.  Let $\s$ be the label of any chamber of $\Gamma$ and let $\s'$ be the label of the associated chamber of $\Gamma'$ under the isomorphism.  Then
\begin{displaymath}
\s' = \s+it(u)-mt(\tilde{v})
\end{displaymath}  
\end{prop}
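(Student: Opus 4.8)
The plan is to exploit the linearity of the asserted formula in $i$ and $m$. The isomorphism for general $(i,m)$ factors as $m$ steps of time evolution (carrying $\Gamma$ to $\Gamma'$ slot by slot, i.e.\ matching state $i_0$ to state $i_0$) followed by $i$ applications of the state-advance symmetry of the periodic diagram $\Gamma'$. If I can show that each of these elementary moves translates \emph{every} chamber label by one fixed vector, then, since translations commute and compose additively, it suffices to compute the translation vector for a single spatial step and for a single time step and add the results. The nontrivial point in this reduction is the uniformity of each translation across all chambers, which is exactly what lets the vector be read off from one convenient chamber.

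I would first treat the spatial shift $(i,m)=(1,0)$. Advancing the state index by one is induced by translating the diagram right by one copy of $u$ together with the rotation $\rho^{-k_1}$; because the biinfinite word $\cdots \rho^{-ik_1}(u)\cdots$ is invariant under exactly this combined operation, it is a rigid symmetry of $\bar{\W}$, and hence permutes chambers while shifting all their labels by a common vector. Using the globally consistent wire labeling coming from the cut—so that the wire labels see each block $\rho^{-ik_1}(u)$ as the same glide $u$—traversing one block shifts labels by a vector independent of $i$. Evaluating on the chamber between wires $1$ and $2$ identifies this vector with $t(u)$ by the very definition of the trajectory, giving the $+i\,t(u)$ term.

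Next I would treat one time step $(i,m)=(0,1)$. By the network description, one step pushes a rotated carrier through the diagram via braid moves, and the net effect on the biinfinite word is the rotation $\rho^{k_2}$, since each state $\rho^{-ik_1}(u)$ becomes $\rho^{-ik_1+k_2}(u)$, matched slot by slot. Arguing in parallel with the spatial case, I would show this again produces a uniform translation of labels; the governing glide is now the carrier read in the state frame, namely $\tilde{v}=\rho^{-k_2}(v)$ (the rotation by $-k_2$ undoing the re-indexing), and the sweep runs opposite to the direction in which states advance, which accounts for the minus sign. Evaluating on the same distinguished chamber should pin the shift as $-t(\tilde{v})$. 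Composing with the spatial step and invoking additivity then yields $\s'=\s+i\,t(u)-m\,t(\tilde{v})$.

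The main obstacle is this last step. Unlike the spatial shift, one time step is not a symmetry of a single static diagram but a transformation between $\Gamma$ and $\Gamma'$ implemented by a long sequence of braid moves, each of which changes one chamber label locally (as in Lemma \ref{lem:YB} and the enriched Yang--Baxter move). I must therefore verify that, once the carrier has completely swept a given region, the accumulated local changes combine into a single uniform translation, and in particular get both the rotation $\rho^{-k_2}$ and the sign right. I expect to handle this by working in the universal cover and tracking the $a$-nice sets $S$ attached to chambers directly through the sweep, rather than the $\tau$-functions, using the $\rho$-equivariance of the label assignment together with the additivity of $t$ under concatenation of glides to reduce the uniform shift to its value on the wires-$1$-and-$2$ chamber.
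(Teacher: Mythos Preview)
Your approach is essentially the paper's: reduce to the two elementary cases $(i,m)=(1,0)$ and $(i,m)=(0,1)$, verify the shift on the chamber between wires $1$ and $2$ using the definition of trajectory, and propagate to all chambers by the fact that the isomorphism preserves wire numbering.

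The one point where you diverge from the paper is your ``main obstacle'' paragraph, which overcomplicates the time step. You invoke Lemma~\ref{lem:YB} and the enriched Yang--Baxter move, but those govern the \emph{chamber variables} $\tau_{[S]}$, not the chamber \emph{labels} $[S]$. The labels are purely combinatorial: a chamber's label is the set of wires passing below it, and a braid move creates one new chamber (with its own label) while leaving every other chamber---and its label---untouched. There is no ``accumulated local change'' to track. The paper's shortcut is to look at the intermediate picture with the carrier sitting at position $i$: the chamber in $\Gamma'$ that you want (immediately left of $y_{1,i}'$) is the same as the chamber immediately left of the carrier $z_{1,i}$ in that intermediate picture, since the subsequent braid moves never touch it. The chamber $\s$ in $\Gamma$ (immediately left of $y_{1,i}$) is likewise the chamber immediately \emph{right} of the carrier in that same intermediate picture. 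So $\s$ and $\s'$ sit on opposite sides of a single rotated copy of $v$ in one static diagram, and their difference is read off directly as $-t(\tilde v)$. No sweep-tracking is needed.
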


\begin{proof}
It suffices to handle the cases $i=1,m=0$ and $i=0,m=1$ as the resulting isomorphisms can be composed as needed to obtain the general case.  First let $i=1, m=0$.  Then $\Gamma = \Gamma'$ is the wiring diagram for
\begin{displaymath}
\cdots \rho^{3k_1}(u) \rho^{2k_1}(u) \rho^{k_1}(u) u \rho^{-k_1}(u) \cdots.
\end{displaymath}
The relevant isomorphism is realized by moving the cylinder to right by the length of one copy of $u$ and then rotating by $-k_1$ units (so e.g. a wire originally at position $k_1+1$ is now at position $1$.  The definition of trajectory implies that 
\begin{displaymath}
\s' = \s+t(u)
\end{displaymath}
where $\s$ is the region between wires $1$ and $2$ at the point left of $u$ (and right of $\rho^{k_1}(u)$).  The isomorphism preserves wire numbering, so $\s' = \s+t(u)$ holds for neighbors of this chamber and, by induction, all chambers.

Now suppose $i=0, m=1$.  Then $\Gamma$ is as before and $\Gamma'$ is the wiring diagrams for
\begin{displaymath}
\cdots \rho^{3k_1+k_2}(u) \rho^{2k_1+k_2}(u) \rho^{k_1+k_2}(u) \rho^{k_2}(u) \rho^{-k_1+k_2}(u) \cdots.
\end{displaymath}
The isomorphism is realized by rotating the cylinder by $k_2$ units.  To see the effect on chamber labels, consider the example of $\s$ the region left of $y_{1,i}$ in Figure \ref{fig:sol8} and $\s'$ the region left of $y_{1,i}'$ in Figure \ref{fig:sol9}.  Note that $\s'$ is also the label of the region left of $z_{1,i}$ in Figure \ref{fig:sol8}.  As $\s'$ and $\s$ are situated between the same pairs of wires to the left and right respectively of some rotation of $v$, their difference is a trajectory.  One can check the rotation is $\tilde{v}$ defined above so
\begin{displaymath}
\s' = \s - t(\tilde{v})
\end{displaymath}
as desired.
\end{proof}

\subsection{Berenstein-Hirota-Zelevinsky datum}

Assign to each lattice element $[S] \in \Z^n$ a variable $\tau_{[S]} \in \mathbb R$. We refer to those as {\it {chamber variables}}, or {\it {$\tau$-functions}}.
Assign to each wire labeled by $j \in \Z$ a parameter $\alpha_j = \alpha_i$, where $i \in [n]$ and $j \equiv i \mod n$. 
Let $e_1, \ldots, e_n$ be the standard generators of $\Z^n$.
A collection $T_{\bullet}$ of variables $\tau_{[S]}$ for all chamber weights $S$ is called {\it {Berenstein-Hirota-Zelevinsky datum}}, or {\it {BHZ datum}}, if the following relations are satisfied for any $[S] \in \Z^n$ and any distinct $i,j,k \in [n]$:
\begin{equation} \label{eqBHZ}
(\alpha_i-\alpha_j)\tau_{[S]+e_k} \tau_{[S]+e_i+e_j} + (\alpha_j-\alpha_k)\tau_{[S]+e_i} \tau_{[S]+e_j+e_k} + (\alpha_k-\alpha_i)\tau_{[S]+e_j} \tau_{[S]+e_i+e_k} = 0.
\end{equation}

For example, for $n=4$, $[S]=(0,0,0,-1)$, $i=1$, $j=2$ and $k=4$ we would have 
$$(\alpha_1-\alpha_2)\tau_{(0,0,0,0)} \tau_{(0,0,0,-1)} + (\alpha_2-\alpha_4)\tau_{(1,0,0,-1)} \tau_{(0,1,0,0)} + (\alpha_4-\alpha_1)\tau_{(0,1,0,-1)} \tau_{(1,0,0,0)} = 0.$$

If in addition we have $\tau_{[S]+e_1 + \ldots + e_n} = \tau_{[S]}$ for any $S$, we call $T_{\bullet}$ a {\it {cylindric Berenstein-Hirota-Zelevinsky datum}}.  
The following theorem allows us to reduce the problem of building $N$-soliton solutions for our system to the problem of building cylindric BHZ datum of certain form. 
\begin{figure}[ht]
\scalebox{0.6}{\input{sol12.pstex_t}}
    \caption{}
    \label{fig:sol12}
\end{figure}
\begin{thm} \label{thmBHZ}
 If $T_{\bullet}$ is a cylindric Berenstein-Hirota-Zelevinsky datum, and if crossing parameters are obtained from surrounding $\tau$-functions via 
 $$a = (\alpha_i - \alpha_j) \frac{\tau_{[S]} \tau_{[S]+e_i+e_j}}{\tau_{[S]+e_i} \tau_{[S]+e_j}},$$ cf. Figure \ref{fig:sol12}, then the same remains true after any sequence of braid moves. 
\end{thm}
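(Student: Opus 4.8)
The plan is to reduce to a single elementary move and then recognize the defining relation of a cylindric BHZ datum as precisely the update rule of the enriched Yang--Baxter move. An arbitrary sequence of braid moves is a composition of single moves, so by induction it suffices to treat one move at a time. Crucially, the datum $T_\bullet$ assigns a value $\tau_{[T]}$ to \emph{every} lattice element $[T] \in \Z^n$ once and for all; a braid move does not alter these values, it only changes which chamber of the wiring diagram carries which label. Thus the theorem amounts to the statement that the value already attached to the new central chamber is consistent with the double-ratio formula, and that the resulting crossing parameters are the Lusztig transforms of the old ones. Commutation moves involve disjoint pairs of wires and leave the four chambers around each affected crossing unchanged, so the double-ratio formula manifestly returns the same parameters; these are handled by a direct and simpler check, and the substance lies in the braid case.

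First I would set up the local picture. A braid move $s_is_js_i \to s_js_is_j$ involves exactly three wires, and of the seven chambers meeting the braid hexagon only the central one changes label: before the move it is some $[S]+e_k$ and afterwards $[S]+e_i+e_j$, for a suitable base point $[S]$ and three distinct indices $i,j,k \in [n]$ labeling the three wires. The remaining six chambers keep their labels; the four that are edge-adjacent to the center are $[S]+e_i$, $[S]+e_j$, $[S]+e_i+e_k$, $[S]+e_j+e_k$, while the top and bottom of the hexagon carry $[S]$ and $[S]+e_i+e_j+e_k$. This is the standard behavior of the set-labeling of chambers under a braid move, read off from the action of $\aW$ on $\Z$.

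Next I would invoke Lemma \ref{lem:YB}. By that lemma, applying the double-ratio formula to the three crossings before the move and again after it produces crossing parameters related by Lusztig's relations \eqref{eqLusztig} precisely when the central chamber variables $X = \tau_{[S]+e_k}$ and $X' = \tau_{[S]+e_i+e_j}$ obey the enriched Yang--Baxter update
\begin{displaymath}
(\gamma-\alpha)XX' = (\beta-\alpha)BE + (\gamma-\beta)AD,
\end{displaymath}
where $\alpha,\beta,\gamma$ are the three wire parameters and $A,B,D,E$ are four of the surrounding chambers as in Figure \ref{fig:sol4}. The heart of the argument is to match this identity with the defining BHZ relation \eqref{eqBHZ}. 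Under the assignment $\alpha=\alpha_j$, $\beta=\alpha_k$, $\gamma=\alpha_i$, together with $\{B,E\} \leftrightarrow \{\tau_{[S]+e_i},\tau_{[S]+e_j+e_k}\}$ and $\{A,D\} \leftrightarrow \{\tau_{[S]+e_j},\tau_{[S]+e_i+e_k}\}$, a direct comparison of coefficients gives $\gamma-\alpha=\alpha_i-\alpha_j$, $\beta-\alpha=\alpha_k-\alpha_j$, and $\gamma-\beta=\alpha_i-\alpha_k$, so the two equations coincide. Hence the update holds simply because $T_\bullet$ satisfies \eqref{eqBHZ}.

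The main obstacle is exactly this matching step: one must carefully track, using the $\aW$-action on subsets of $\Z$ and the orientation conventions of Figures \ref{fig:sol3} and \ref{fig:sol4}, which surrounding chamber plays the role of each of $A,B,D,E$ and which wire parameter is $\alpha,\beta,\gamma$, so that the signs in \eqref{eqBHZ} line up with those in the enriched move. Once this dictionary is fixed the verification is a single substitution, as indicated above. Finally, the cylindric condition $\tau_{[S]+e_1+\cdots+e_n}=\tau_{[S]}$ is a property of the fixed datum and is untouched by braid moves, so the output is again a cylindric BHZ datum whose crossing parameters are presented by the double-ratio formula, completing the induction.
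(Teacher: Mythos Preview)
Your proposal is correct and follows the same approach as the paper, which proves the theorem in a single line: ``Follows immediately from Lemma \ref{lem:YB}.'' You have simply spelled out what the paper leaves implicit, namely the identification of the BHZ relation \eqref{eqBHZ} with the enriched Yang--Baxter update rule and the bookkeeping of which chamber labels change under a braid move; this added detail is accurate and the induction on the number of moves is the natural way to unpack the one-line argument.
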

Note that due to the cylindricity condition $\tau_{[S]+e_1 + \ldots + e_n} = \tau_{[S]}$ it does not matter which of the lifting of this crossing to the universal cover we choose. 

\begin{proof}
 Follows immediately from Lemma \ref{lem:YB}.
\end{proof}

\begin{rmk}
 We use the term Berenstein-Hirota-Zelevinsky datum to point out the resemblance with Berenstein-Zelevinsky datum of \cite{BZ}. 
 In the literature some other names are more common, such as {\it {Hirota bilinear difference equation}} \cite{Z}, {\it {discrete
analogue of generalized Toda equation}} and {\it {lattice KP equation}} \cite{N}, {\it { bilinear lattice KP equation}} \cite{ZFSZ}, or {\it {Hirota-Miwa equation}} \cite{LNQ}.
It goes back to the works of Miwa \cite{Mi} and Hirota \cite{H}.
Our terminology is well-suited to define cylindric Berenstein-Hirota-Zelevinsky datum, which we are not aware of appearing anywhere in the existing literature. 
\end{rmk}

\section{$N$-soliton solutions} \label{sec:nsol}

In this section, we construct soliton solutions to affine dKdV.  More precisely, we review formulas for soliton solutions to the Hirota bilinear difference equation and explain how to achieve the clyndiric condition.  The methods of the previous section then let one construct soliton solutions for any instance of affine dKdV from such a cylindric BHZ datum.

\begin{rmk}
 The Cauchy matrix approach to discrete KP type equations has been studied by Nijhoff et al \cite{NA} and by Feng, Zhao et al \cite{FZh, ZFSZ}.  We specifically cite the lecture notes \cite{N} throughout because we follow the presentation therein very closely.
\end{rmk}

\subsection{Hirota ansatz and $1$-soliton points}
The 1-soliton solutions arise from $\tau$-functions of the form
\begin{equation} \label{eq1soliton}
\tau_{[S]} = 1 + AB_1^{\s_1}B_2^{\s_2}\cdots B_n^{\s_n}.
\end{equation}
Here $A > 0$ is an arbitrary constant but $B_1,\ldots, B_n$ must be chosen carefully so that \eqref{eqBHZ} is satisfied.  Specifically, fix $b, c \in \mathbb{R}$ each distinct from $\alpha_1,\ldots, \alpha_n$ and let
\begin{displaymath}
B_j = \frac{b-\alpha_j}{c-\alpha_j}
\end{displaymath}
for $j = 1,2,\ldots, n$.

\begin{prop}[\cite{N}] \label{prop1Soliton}
For any choices of $A, b, c$, the formula \eqref{eq1soliton} gives a BHZ datum.  
\end{prop}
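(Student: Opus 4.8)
The plan is to substitute the ansatz directly into the BHZ relation \eqref{eqBHZ} and verify it as a polynomial identity in the single quantity $M := A B_1^{\s_1}\cdots B_n^{\s_n}$. Since $\tau_{[S]} = 1 + M$ and a shift $[S]\mapsto [S]+e_k$ simply multiplies $M$ by $B_k$, each of the six $\tau$-values occurring in \eqref{eqBHZ} has the form $1 + M\cdot(\text{monomial in the }B\text{'s})$. Every one of the three products is then of the shape $(1+MX)(1+MY) = 1 + M(X+Y) + M^2 XY$, so the left-hand side of \eqref{eqBHZ} becomes a quadratic polynomial in $M$ whose three coefficients I would examine in turn.

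The coefficients of $M^0$ and of $M^2$ are immediate. The constant term is $(\alpha_i-\alpha_j)+(\alpha_j-\alpha_k)+(\alpha_k-\alpha_i)=0$, and the $M^2$ term is the common factor $B_iB_jB_k$ times the same telescoping sum, hence also vanishes. Neither uses the specific form of the $B_j$; they hold for arbitrary values. The entire content of the proposition is therefore concentrated in the coefficient of $M$, namely
$$(\alpha_i-\alpha_j)(B_k+B_iB_j)+(\alpha_j-\alpha_k)(B_i+B_jB_k)+(\alpha_k-\alpha_i)(B_j+B_iB_k),$$
which is where the choice $B_j=(b-\alpha_j)/(c-\alpha_j)$ must enter.

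To dispatch this term I would change variables to $\beta_j := c-\alpha_j$ and $d := b-c$, so that $B_j = 1 + d/\beta_j$ and $\alpha_i-\alpha_j = \beta_j-\beta_i$. A one-line computation then gives $B_k + B_iB_j = C + d^2/(\beta_i\beta_j)$, where $C := 2 + d(1/\beta_i+1/\beta_j+1/\beta_k)$ is symmetric in $i,j,k$ and hence identical in all three brackets. The symmetric part contributes $C\big[(\beta_j-\beta_i)+(\beta_k-\beta_j)+(\beta_i-\beta_k)\big]=0$ by cyclic antisymmetry of the $\alpha$-coefficients, and the residual part is $d^2\big[(1/\beta_i-1/\beta_j)+(1/\beta_j-1/\beta_k)+(1/\beta_k-1/\beta_i)\big]=0$, a telescoping sum. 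Thus the $M^1$ coefficient vanishes and \eqref{eqBHZ} holds identically.

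The only genuine obstacle is this $M^1$ coefficient, and the key to handling it cleanly is the additive form $B_j = 1 + d/\beta_j$: it splits each bracket into a fully symmetric piece annihilated by the antisymmetric $\alpha$-coefficients and a residual piece that telescopes. (Equivalently one could invoke the identity $B_i-B_j = d\,(\alpha_i-\alpha_j)/(\beta_i\beta_j)$, but the additive form makes the symmetry structure most transparent.) I would note finally that positivity of $A$ plays no role in the algebraic identity, mattering only for the soliton interpretation, and that the ansatz is manifestly \emph{not} cylindric, since $\tau_{[S]+e_1+\cdots+e_n}=1+MB_1\cdots B_n\neq \tau_{[S]}$ in general; this is consistent with the proposition asserting only the BHZ property, with cylindricity to be arranged separately.
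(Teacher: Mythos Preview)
Your argument is correct. The substitution $\beta_j = c-\alpha_j$, $d=b-c$ so that $B_j = 1 + d/\beta_j$ is exactly the right device: it makes the $M^1$ coefficient split into a symmetric piece killed by the telescoping $\alpha$-differences and a residual piece $(\beta_j-\beta_i)/(\beta_i\beta_j) = 1/\beta_i - 1/\beta_j$ that also telescopes. I checked each step and the algebra goes through as you describe.

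As for comparison: the paper does not actually prove this proposition. It is stated with a citation to Nijhoff's lecture notes \cite{N} and no argument is given in the text. Your proof is therefore not an alternative to the paper's proof but rather a self-contained replacement for an external reference. The direct verification you give is the standard one for the Hirota bilinear difference equation and is essentially what one finds in \cite{N}; your additive parametrization $B_j = 1 + d/\beta_j$ is a clean way to organize it. Your closing remarks about positivity of $A$ being irrelevant to the algebra, and about cylindricity being a separate condition imposed afterward, are both accurate and match how the paper proceeds in the paragraph following the proposition.
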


Now suppose $A$ is still arbitrary but $b,c$ are chosen to satisfy the relation
\begin{displaymath}
(b-\alpha_1)\cdots(b-\alpha_n) = (c-\alpha_1)\cdots(c-\alpha_n).
\end{displaymath}
In this case $B_1B_2 \cdots B_n = 1$ so
\begin{displaymath}
\tau_{[S]+e_1+\ldots+e_n} = 1 + AB_1^{\s_1+1}B_2^{\s_2+1}\cdots B_n^{\s_n+1} = 1 + AB_1^{\s_1}B_2^{\s_2}\cdots B_n^{\s_n} = \tau_{[S]}
\end{displaymath}
meaning that the datum is cylindric.

\subsection{$2$-soliton points} The $2$-soliton solutions have the general form $\tau_{[S]} = 1 + f_1 + f_2 + Zf_1f_2$ where $1+f_1$ and $1+f_2$ are both $1$-soliton solutions.  Supposing $A_1,A_2, b_1,b_2,c_1,c_2$ are all real constants we can take 
\begin{displaymath}
f_i = A_iB_{i,1}^{\s_1} \cdots B_{i,n}^{\s_n}
\end{displaymath}
where
\begin{displaymath}
B_{i,j} = \frac{b_i-\alpha_j}{c_i-\alpha_j}
\end{displaymath}
for $i=1,2$ and $j=1,2,\ldots, n$.  It remains to specify the constant $Z$.

\begin{prop}[\cite{N}] \label{prop:2}
The function $\tau_{[S]} = 1 + f_1 + f_2 + Zf_1f_2$ with $f_1,f_2$ as above and 
\begin{displaymath}
Z = \frac{(b_1-b_2)(c_1-c_2)}{(b_1-c_2)(c_1-b_2)}
\end{displaymath}
gives a BHZ datum.  
\end{prop}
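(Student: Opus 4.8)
The plan is to verify the bilinear relation \eqref{eqBHZ} directly, taking advantage of the fact that $A_1,A_2$ are free. Shifting $[S]$ by $e_k$ multiplies $f_i$ by $B_{i,k}$, so every $\tau$-function occurring in \eqref{eqBHZ} has the shape $1+(\cdots)f_1+(\cdots)f_2+Z(\cdots)f_1f_2$, with coefficients built from the $B_{i,j}$ alone. Regarding $A_1,A_2$ (equivalently the values $f_1,f_2$ at the base point) as independent indeterminates, the left-hand side of \eqref{eqBHZ} is a polynomial of degree at most two in each of $f_1,f_2$, and since it must vanish for all choices of $A_1,A_2$, it suffices to check that the coefficient of each monomial $f_1^pf_2^q$ with $0\le p,q\le 2$ is identically zero. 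These coefficients do not depend on $[S]$, so checking them once settles all instances of \eqref{eqBHZ}.

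First I would dispose of the eight monomials other than $f_1f_2$. For the pure powers $1,f_1^2,f_2^2,f_1^2f_2^2$ the coefficient factors as a common monomial in the $B_{i,j}$ times $(\alpha_i-\alpha_j)+(\alpha_j-\alpha_k)+(\alpha_k-\alpha_i)=0$. For $f_1$ the coefficient is literally the left-hand side of \eqref{eqBHZ} for the one-soliton datum $1+f_1$, hence vanishes by Proposition \ref{prop1Soliton}; symmetrically for $f_2$. For $f_1^2f_2$ the coefficient equals $Z\,B_{1,i}B_{1,j}B_{1,k}$ times the one-soliton expression in the variables $B_{2,\bullet}$, which again vanishes by Proposition \ref{prop1Soliton} applied to $b_2,c_2$; the monomial $f_1f_2^2$ is handled identically with the two solitons exchanged. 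The point to stress is that none of these eight vanishings involves $Z$.

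The entire content therefore sits in the coefficient of $f_1f_2$. Writing out the four ways a product $\tau_X\tau_Y$ can produce $f_1f_2$, this coefficient is linear in $Z$, of the form $Z\,\Sigma_Z+\Sigma_0$, where
\[
\Sigma_Z=(\alpha_i-\alpha_j)\bigl(B_{1,k}B_{2,k}+B_{1,i}B_{1,j}B_{2,i}B_{2,j}\bigr)+(\text{cyclic}),
\]
\[
\Sigma_0=(\alpha_i-\alpha_j)\bigl(B_{1,k}B_{2,i}B_{2,j}+B_{2,k}B_{1,i}B_{1,j}\bigr)+(\text{cyclic}),
\]
the cyclic sum running over $(i,j,k)\mapsto(j,k,i)\mapsto(k,i,j)$. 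It remains to show that the stated $Z=(b_1-b_2)(c_1-c_2)/\bigl((b_1-c_2)(c_1-b_2)\bigr)$ satisfies $Z\,\Sigma_Z+\Sigma_0=0$ for every triple $i,j,k$. Substituting $B_{i,m}=(b_i-\alpha_m)/(c_i-\alpha_m)$ and clearing the denominators $(c_1-\alpha_m)(c_2-\alpha_m)$ for $m\in\{i,j,k\}$ converts this into a polynomial identity in $\alpha_i,\alpha_j,\alpha_k,b_1,c_1,b_2,c_2$.

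I expect this rational identity to be the main obstacle, and the one genuinely computational step. The goal is not merely to confirm it by brute expansion but to see the factor $(b_1-c_2)(c_1-b_2)$ arise from $\Sigma_Z$ and $(b_1-b_2)(c_1-c_2)$ from $\Sigma_0$ after the antisymmetrization in $\alpha_i,\alpha_j,\alpha_k$; in particular this is what forces $-\Sigma_0/\Sigma_Z$ to be independent of $i,j,k$, as the Proposition demands. A clean way to organize the check is to observe that $\Sigma_Z$ is the one-soliton trilinear form evaluated at the products $B_{1,m}B_{2,m}$, while $\Sigma_0$ is its polarization mixing $B_{1,\bullet}$ and $B_{2,\bullet}$; the Proposition \ref{prop1Soliton} relations for the two solitons separately can then be used to collapse much of the expansion before the final factorization.
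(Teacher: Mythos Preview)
The paper does not supply its own proof of this proposition; it is simply quoted from \cite{N}. So there is no argument in the text to compare against, and your proposal stands on its own.

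Your reduction is correct and is the standard way these Hirota-type identities are verified. Expanding the left side of \eqref{eqBHZ} as a polynomial in $f_1,f_2$ and killing each monomial separately is exactly right; your handling of the eight easy monomials is accurate (minor quibble: $Z$ does appear in the $f_1^2f_2$, $f_1f_2^2$, $f_1^2f_2^2$ coefficients, but only as an overall scalar, so it imposes no constraint, which is your real point). The identification of the $f_1f_2$ coefficient as $Z\Sigma_Z+\Sigma_0$ with the displayed expressions is also correct.

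The one soft spot is the final paragraph. Your observation that $\Sigma_Z$ is the one-soliton form evaluated at the products $B_{1,m}B_{2,m}$ is true but not directly useful: those products are not of the shape $(b-\alpha_m)/(c-\alpha_m)$ for any single pair $b,c$, so Proposition~\ref{prop1Soliton} says nothing about $\Sigma_Z$ (and indeed $\Sigma_Z$ is generically nonzero, as you need it to be). The ``polarization'' remark is similarly suggestive but does not obviously collapse anything. What actually carries the day is the direct route you mention first: clear denominators by $\prod_{m\in\{i,j,k\}}(c_1-\alpha_m)(c_2-\alpha_m)$ and verify the resulting polynomial identity in $\alpha_i,\alpha_j,\alpha_k,b_1,c_1,b_2,c_2$. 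Both sides are alternating in $\alpha_i,\alpha_j,\alpha_k$, hence divisible by the Vandermonde, which cuts the degree enough that the check is short. So your plan is sound; just do not expect the one-soliton relations to shortcut the $f_1f_2$ computation.
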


Suppose in addition that 
\begin{displaymath}
(b_i-\alpha_1)\cdots(b_i-\alpha_n) = (c_i-\alpha_1)\cdots(c_i-\alpha_n)
\end{displaymath}
for $i=1,2$, implying that both component solitons are cyllindric.  It follows easily that the BHZ datum from Proposition \ref{prop:2} is cyllindric in this case.

\subsection{$N$-soliton points}  Now fix any $N \geq 1$ and let
\begin{displaymath}
B_{i,j} = \frac{b_i-\alpha_j}{c_i-\alpha_j}
\end{displaymath}
for all $i=1,\ldots, N$ and $j=1,\ldots, n$.  Let 
\begin{displaymath}
f_i = A_iB_{i,1}^{\s_1} \cdots B_{i,n}^{\s_n}
\end{displaymath}
for $i=1,\ldots, N$.  An $N$-soliton solution arises from the $\tau$-function
\begin{equation} \label{eqNSoliton}
\tau_{[S]} = \sum_{T \subseteq [N]} \prod_{\{i<j\} \subseteq T}Z_{i,j}\prod_{i \in T} f_i
\end{equation}
where 
\begin{displaymath}
Z_{i,j} = \frac{(b_i-b_j)(c_i-c_j)}{(b_i-c_j)(c_i-b_j)}.
\end{displaymath}

\begin{thm}[\cite{N}] \label{thmNSoliton}
The $\tau$-function \eqref{eqNSoliton} gives a BHZ-datum.  
\end{thm}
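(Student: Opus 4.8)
The plan is to realize the sum \eqref{eqNSoliton} as a single Cauchy determinant and then to deduce \eqref{eqBHZ} from a bilinear determinant identity, following the Cauchy-matrix philosophy of \cite{N}. (Only \eqref{eqBHZ} needs checking here, since cylindricity is an independent condition handled, as for $N=1,2$, by imposing $B_{i,1}\cdots B_{i,n}=1$.) First I would put the $\tau$-function in closed form. Set
\[
V_{ij} = \frac{(b_i - c_i)\,f_i}{b_i - c_j}, \qquad i,j \in [N],
\]
so $V$ is a Cauchy matrix whose $i$-th row is modulated by the wave factor $f_i$. Expanding $\det(I+V)=\sum_{T\subseteq[N]}\det V_T$ over principal minors and evaluating each $\det V_T$ by the Cauchy determinant formula
\[
\det\left(\frac{1}{b_i-c_j}\right)_{i,j\in T} = \frac{\prod_{\{i<j\}\subseteq T}(b_i-b_j)(c_j-c_i)}{\prod_{i,j\in T}(b_i-c_j)},
\]
the factor $\prod_{i\in T}(b_i-c_i)$ cancels the diagonal part of the denominator and, after collecting signs (the two powers of $(-1)^{\binom{|T|}{2}}$ combine to $1$), the remaining off-diagonal factors reassemble into exactly $\prod_{\{i<j\}\subseteq T}Z_{i,j}\prod_{i\in T}f_i$. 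Hence $\tau_{[S]}=\det(I+V)$, matching \eqref{eqNSoliton}.

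Next I would track the lattice shifts. Shifting $[S]\mapsto[S]+e_p$ multiplies each $f_i$ by $B_{i,p}=(b_i-\alpha_p)/(c_i-\alpha_p)$, hence scales the $i$-th row of $V$ by $B_{i,p}$; equivalently $V\mapsto\Lambda_p V$ with $\Lambda_p=\mathrm{diag}(B_{1,p},\ldots,B_{N,p})$. Writing the three distinct directions of \eqref{eqBHZ} as $p,q,r$,
\begin{align*}
\tau_{[S]+e_p} &= \det(I+\Lambda_p V), \\
\tau_{[S]+e_p+e_q} &= \det(I+\Lambda_p\Lambda_q V),
\end{align*}
and so \eqref{eqBHZ} becomes the assertion that
\[
(\alpha_p-\alpha_q)\det(I+\Lambda_r V)\det(I+\Lambda_p\Lambda_q V)+(\alpha_q-\alpha_r)\det(I+\Lambda_p V)\det(I+\Lambda_q\Lambda_r V)+(\alpha_r-\alpha_p)\det(I+\Lambda_q V)\det(I+\Lambda_p\Lambda_r V)=0,
\]
an identity among determinants of $I$ plus diagonally rescaled copies of one fixed Cauchy matrix.

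The heart of the proof is this bilinear identity. I would establish it by expanding both sides into sums over pairs of subsets, collecting the coefficient of each monomial $\prod_i f_i^{m_i}$ with $m_i\in\{0,1,2\}$, and showing it vanishes. Because each $Z_{i,j}$ and each $B_{i,p}$ is a product of linear-fractional two-body factors, this coefficient factors as a product over the support of the monomial times a short alternating combination in $\alpha_p,\alpha_q,\alpha_r$ that collapses to a rational identity in the $b_m,c_m$. The main obstacle is exactly this cancellation: it is the combinatorial incarnation of integrability, and verifying that the three-body and higher terms cancel is where the precise form of the phase-shift factors $Z_{i,j}$ is indispensable. As both a sanity check and the basis of an alternative argument, the cases $N=1$ and $N=2$ are precisely Propositions \ref{prop1Soliton} and \ref{prop:2}; one could therefore instead induct on $N$, peeling off one soliton at a time by applying a Desnanot--Jacobi (Dodgson condensation) step to $\det(I+\Lambda V)$ and reducing the $N$-soliton identity to the already-established $2$-soliton one.
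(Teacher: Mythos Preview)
The paper does not give a proof of Theorem \ref{thmNSoliton}; it is cited from \cite{N}, and the only related content in the paper is the paragraph immediately following the theorem, which rewrites \eqref{eqNSoliton} as the determinant $\det\bigl[\delta_{i,j}+f_i(b_j-c_j)/(b_i-c_j)\bigr]$. Your derivation of the determinantal form is correct and equivalent (your matrix $V$ is conjugate to the paper's via $\mathrm{diag}(b_i-c_i)$, so $\det(I+V)$ agrees), and your identification of the lattice shift $[S]\mapsto[S]+e_p$ with the diagonal rescaling $V\mapsto\Lambda_p V$ is exactly the mechanism underlying the Cauchy-matrix approach of \cite{N,NA,ZFSZ}. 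So your setup matches both the paper's determinantal reformulation and the intended reference.

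That said, your proposal stops short at the crucial step: you correctly reduce \eqref{eqBHZ} to a trilinear-in-$\alpha$ bilinear-in-determinants identity, but then only sketch two possible routes (monomial-by-monomial cancellation, or Desnanot--Jacobi induction) without executing either. Neither route is automatic. The monomial expansion requires verifying that for every pair $(T_1,T_2)$ with $T_1\cup T_2$ fixed the alternating sum over the three cyclic assignments of $p,q,r$ vanishes, and this is where the specific form of both $B_{i,p}$ and $Z_{i,j}$ enters; it is doable but is the actual content of the theorem, not a formality. The Desnanot--Jacobi suggestion is appealing but vague as stated: Dodgson condensation relates a determinant to its four corner minors and its central minor, and it is not obvious how to align those minors with the six determinants $\det(I+\Lambda_\bullet V)$, $\det(I+\Lambda_\bullet\Lambda_\bullet V)$ appearing in your identity, nor how the induction step reduces cleanly to the $N=2$ case. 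Since the paper itself defers to \cite{N}, your outline is adequate as a pointer, but if you intend a self-contained argument you should carry out one of the two routes explicitly.
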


Here we have chosen arbitrary constant $A_i$, $b_i$, and $c_i$ for $i=1,\ldots, N$.  As in the previous cases, we can obtain a cyllindric BHZ-datum by assuming
\begin{displaymath}
(b_i-\alpha_1)\cdots(b_i-\alpha_n) = (c_i-\alpha_1)\cdots(c_i-\alpha_n)
\end{displaymath}
for $i=1,\ldots, N$.

We conclude this section by noting that the formula for $N$-solitons can also be expressed as a determinant.  The determinantal form is more common in the literature.

Consider the matrix
\begin{displaymath}
\left[\delta_{i,j} + \frac{f_i(b_j-c_j)}{b_i-c_j}\right]_{i,j=1}^N.
\end{displaymath}
Its determinant is linear in each of $f_1,\ldots, f_n$ and one can check for $T \subseteq [N]$ that the coefficient of 
\begin{displaymath}
\prod_{i \in T}f_i
\end{displaymath}
equals the determinant of the Cauchy-like matrix
\begin{displaymath}
\left[\frac{b_j-c_j}{b_i-c_j}\right]_{i,j \in T}
\end{displaymath}
which in turn equals
\begin{displaymath}
\frac{\prod_{i < j}(b_i-b_j)\prod_{i<j}(c_j-c_i)}{\prod_{i \neq j}(b_i-c_j)} = \prod_{i<j}Z_{i,j}
\end{displaymath}
where in all the products $i$ and $j$ are restricted to $T$.  Summing over $T$ we obtain
\begin{displaymath}
\left|\delta_{i,j} + \frac{f_i(b_j-c_j)}{b_i-c_j}\right|_{i,j=1}^N = \tau_{[S]}
\end{displaymath}
for $\tau_{[S]}$ as in \eqref{eqNSoliton}.

\section{Alcove model} \label{secAlcove}

\subsection{Background}

Recall the following background on the alcove model of the affine symmetric group $\hat{S}_n$. Consider the finite group $S_n$ first. The set $$\Phi^+ = \{e_i-e_j \mid 1 \leq i < j \leq n\}$$ is the set of {\it {positive roots}}. 
The ambient vector space is $V = \mathbb R^n / (1,1,\ldots,1)$, which we also identify with $\{(x_1, \ldots, x_n) \in \mathbb R^n \mid \sum_i x_i =0\}$. Denote $\alpha_{ij} = e_i-e_j$ and $\alpha_i = e_i - e_{i+1}$. The $\alpha_i$ 
are the {\it {simple roots}} of the root system. The hyperplanes $(\alpha_{ij},x) = 0$ partition the 
dual vector space $V^*$ into $n!$ {\it {chambers}}. The chamber given by $(\alpha_{ij},x) \geq 0$ for all $\alpha_{ij} \in \Phi^+$ is called the {\it {dominant chamber}} $\mathfrak C_{id}$. The extreme rays of the dominant chamber $\mu_i$, $i=1, \ldots, n-1$ are given by 
$$(\alpha_i, \mu_j) = 1 \text{ if } i=j, (\alpha_i, \mu_j) = 0 \text{ otherwise.}$$ The $\mu_i$ are the {\it {fundamental weights}}.

\begin{figure}[ht]
\scalebox{1}{\input{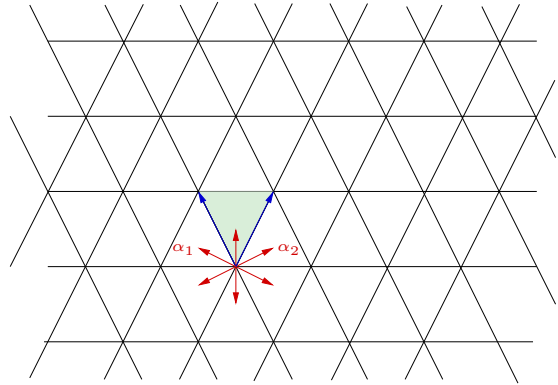}}
    \caption{The roots of the finite root system, the fundamental alcove, and the fundamental weights  for the group $\hat S_3$}
    \label{fig:aff1}
\end{figure}

The group $S_n$ is generated by reflections $s_i$ with respect to hyperplanes $(\alpha_{i},x) = 0$. Denote $\rho = \alpha_{1n}$ the {\it {longest root}} of the root system.   If we also allow the reflection $s_0$ with respect to $(\rho,x) = 1$, 
we generate the affine symmetric group $\hat S_n$. The real roots of the affine root system correspond to the reflection hyperplanes $(\alpha_{i,j},x) = m$, where $m$ ranges over all integers. We denote the corresponding root by $\alpha_{ij}^{-m}$ if $m > 0$ and $\alpha_{ij}^{1-m}$ if $m \leq 0$ (so roots $\alpha_{ij}^m$ are indexed by $i,j,m \in \mathbb{Z}$ with $1\leq i<j\leq n$ and $m \neq 0$). 

The collection of all reflection hyperplanes
partition $V^*$ into {\it {alcoves}}. The alcove $\mathfrak A_{id}$ given by $(\alpha_{i},x) \geq 0$ and $(\rho,x) \leq 1$ is called the {\it {fundamental alcove}}. Alcoves can be identified with the elements of $\hat S_n$, by identifying $w(\mathfrak A_{id})$ with $w$ for all $w \in \hat S_n$. 
If one considers vertices of all alcoves to be elements of a ground set, and alcoves to be facets, one obtains the {\it {Coxeter complex}} of $\hat S_n$. The codimension $1$ faces of the complex are called {\it {walls}}.

Figure \ref{fig:aff1} shows the case of $n=3$. 

\subsection{Infinite reduced words and wall parameters}
Each wall separates a pair of alcoves corresponding to elements $w, w' \in \hat{S}_n$ with the property that $w' = ws_{\ell}$ for some $\ell \in \{0,1,\ldots, n-1\}$.  We label each wall with the corresponding index $\ell$.  Using this, one can identify an expression $w=s_{\ell_1}\cdots s_{\ell_k}$ with an alcove path beginning at the fundamental alcove, crossing in turn walls labeled $\ell_1,\ldots, \ell_k$, and ending at the alcove for $w$.  The expression is reduced if and only if it never crosses the same hyperplane twice.

\begin{figure}[ht]
\scalebox{1}{\input{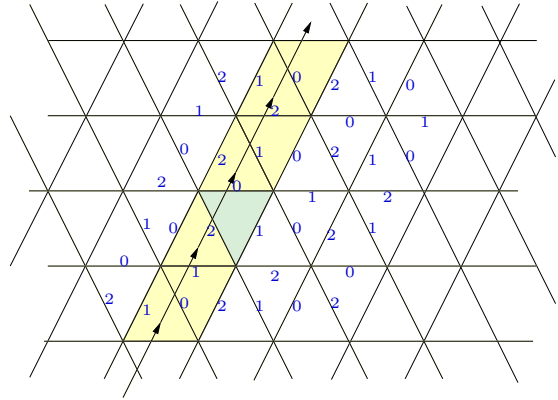}}
    \caption{The alcove walk corresponding to an infinite reduced word $\dotsc (s_0 s_1) (s_2 s_0) (s_1 s_2) (s_0 s_1) \dotsc$ with a fixed choice of a cut.}
    \label{fig:aff2}
\end{figure}
In the same way, one can identify infinite reduced words with a choice of cut with alcove walks passing through the fundamental alcove. If one performs then some braid moves, one can identify the resulting infinite reduced word with an alcove path not necessarily passing through the fundamental alcove. 

\begin{ex}
 Let $n=3$ and consider the glide $u = s_2 s_0$ of offset $1$ and the associated infinite reduced word 
 $$\dotsc \rho(u)\cdot u \cdot \rho^{-1}(u) \cdot \rho^{-2}(u) \dotsc = \dotsc (s_0 s_1) (s_2 s_0) (s_1 s_2) (s_0 s_1) \dotsc.$$
 If we choose to do the cut in the middle of $u$, we can associate to this infinite reduced word the alcove walk shown in Figure \ref{fig:aff2}.
\end{ex}

If our infinite reduced word is enriched by parameters $y_i$, $i \in \mathbb Z$, we can naturally place those parameters on walls separating the alcoves of the walk. Then once we start applying the time evolution, the wall parameters will propagate to more walls (in the case $n=3$, all walls) of the Coxeter complex.
An example of this is shown in Figure \ref{fig:aff3}, where the $y_i$-s are schematically denoted by $*$-s.

\begin{figure}[ht]
\scalebox{1}{\input{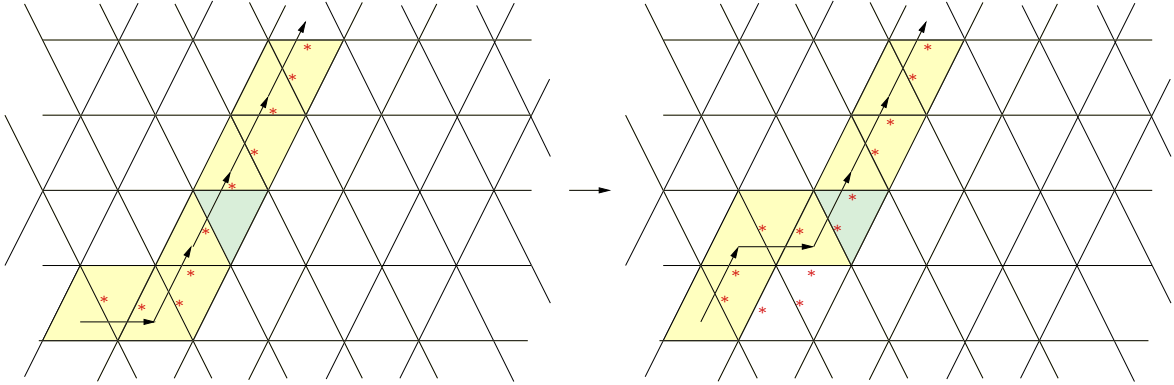}}
    \caption{Wall parameters propagate via carrier-state interaction.}
    \label{fig:aff3}
\end{figure}

\begin{ex}
Figure \ref{figAlcoveSoliton} illustrates the parameters of the walls contained in one of the three families of hyperplanes.  Each vertical segment signifies the difference between a wall parameter and its associated value in the vacuum solution.  The specific solution plotted is a $1$-soliton.
\end{ex}


\begin{figure}
\vspace{-0.5in}
\includegraphics[width=12cm,height=12cm]{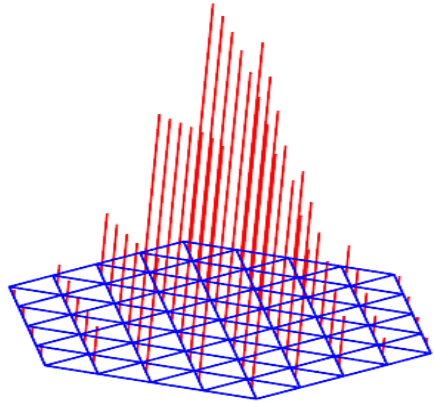}
\vspace{-1.7in}
\caption{A $1$-soliton solution depicted on top of the alcove model}
\label{figAlcoveSoliton}
\end{figure}

\subsection{Properties of the model} \label{sec:prop}
Recall the notion of trajectory $t(u)$ of a glide $u$, which was given as a difference of labels of two faces in the wiring diagram for $u$.  The face labels are weights and hence $t(u)$ is an element of the weight lattice.  Now, the weight lattice acts on $V^*$ by translations.  One can check that translation by $t(u)$ sends the fundamental alcove to the alcove $u$.  This characterization provides an alternate definition for trajectory.

\begin{rmk}
Associated to a glide $u$ are two transformations of $V^*$, one via the defining action of $\hat{S}_n$ and the other being translation by $t(u)$.  Both of these carry the fundamental alcove to the one labeled $u$.  However, they are only identical transformations if $u$ has offset $0$.
\end{rmk}

Fix wire weights $\alpha_1,\ldots, \alpha_n$.  Assume that $$\alpha_{\pi(1)} > \alpha_{\pi(2)} > \dotsc > \alpha_{\pi(n)},$$
where $\pi \in S_n$ is some permutation.  Recall the wire ansatz involves taking a wiring diagram and assigning to each crossing the parameter $\alpha_j-\alpha_i$ where $i$ is the lower wire of the crossing and $j$ is the upper wire.  As an application of the alcove model, we use it to determine when this process produces all positive parameters.

\begin{prop} \label{propPositivity}
Let $w=s_{\ell_1}\cdots s_{\ell_k}$ be a glide with a given reduced word and consider the wiring diagram for $|w$ (i.e. $w$ with the cut all the way to the left).  The crossing parameters are all positive if and only if $t(w)$ lies in $\pi(\mathfrak C_{id})$, the chamber obtained from the dominant one by applying $\pi$.
\end{prop}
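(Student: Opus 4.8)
The plan is to pass to the alcove model and read off the sign of each crossing parameter as the sign of a root paired with the wire vector. Write $\mathbf{a}=(\alpha_1,\dots,\alpha_n)$, regarded as a point of $V^*$; the standing hypothesis $\alpha_{\pi(1)}>\dots>\alpha_{\pi(n)}$ says precisely that $\mathbf{a}$ lies in the open chamber $\pi(\mathfrak{C}_{id})$. The reduced word $w=s_{\ell_1}\cdots s_{\ell_k}$ with the cut on the left corresponds to an alcove walk $\mathfrak{A}_{id}=A_0,A_1,\dots,A_k=w$, in which the step $A_{t-1}\to A_t$ crosses the reflecting hyperplane $H_t$ of a positive root $\beta_t$ (its linear part). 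First I would establish the local identity that, under the wire ansatz, the crossing parameter produced at step $t$ equals $\epsilon_t\,(\beta_t,\mathbf{a})$, where $\epsilon_t=+1$ if the walk moves to the side on which $(\beta_t,x)$ increases and $\epsilon_t=-1$ otherwise. This is a direct check: at a crossing of the wires carrying labels $p$ (lower) and $q$ (upper) the parameter is $\alpha_q-\alpha_p$ by Lemma \ref{lemWireAnsatz}, the hyperplane is that of $\alpha_{pq}=e_p-e_q$ (reduced modulo $n$), and $(\alpha_{pq},\mathbf{a})=\alpha_p-\alpha_q$; matching this difference against the direction in which the walk traverses $H_t$ produces the stated sign. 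The one place requiring genuine care is the $s_0$ crossings, which occur in the back of the cylinder and must be tracked in the universal cover.

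Next I would globalize using reducedness. Because the word is reduced, the hyperplanes $H_1,\dots,H_k$ are distinct and are exactly those separating $\mathfrak{A}_{id}$ from the alcove $w$; each is crossed once, so $w$ lies on the side of $H_t$ to which the walk moved, i.e. $\epsilon_t$ records which side of $H_t$ contains $w$. Now I use that the alcove $w$ is the translate $t(w)+\mathfrak{A}_{id}$, since translation by the trajectory carries the fundamental alcove to $w$. Every alcove lies in a unit slab $\{(\beta,x)\in(m,m+1)\}$ for each positive root $\beta$, and $\mathfrak{A}_{id}$ is the slab $(\beta,x)\in(0,1)$ for all $\beta\in\Phi^+$; as $t(w)$ pairs integrally with every root, on $w$ we have $(\beta,x)\in\bigl((\beta,t(w)),(\beta,t(w))+1\bigr)$. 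Hence the separating hyperplanes with linear part $\beta$ are exactly those at the integer levels strictly between $0$ and $(\beta,t(w))$, and for each of them $\epsilon_t=\mathrm{sign}(\beta,t(w))$. Combined with the local identity, the crossing parameter at step $t$ is positive if and only if $\mathrm{sign}(\beta_t,\mathbf{a})=\mathrm{sign}(\beta_t,t(w))$.

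Finally, all crossing parameters are positive exactly when $\mathrm{sign}(\beta,\mathbf{a})=\mathrm{sign}(\beta,t(w))$ holds for every positive root $\beta$ occurring as some $\beta_t$, and by the previous step these occurring roots are precisely the $\beta\in\Phi^+$ with $(\beta,t(w))\neq 0$. For a glide the trajectory $t(w)$ is a regular weight, i.e. $(\beta,t(w))\neq 0$ for all $\beta\in\Phi^+$, so every positive root occurs; granting this, the condition reads $\mathrm{sign}(\beta,\mathbf{a})=\mathrm{sign}(\beta,t(w))$ for all $\beta\in\Phi^+$, which is exactly the assertion that $\mathbf{a}$ and $t(w)$ lie in the same open chamber. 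Since $\mathbf{a}\in\pi(\mathfrak{C}_{id})$, this is equivalent to $t(w)\in\pi(\mathfrak{C}_{id})$, as claimed. I expect the main obstacle to be the local sign computation of the first paragraph, namely pinning down the orientation conventions at crossings — in particular the $s_0$ crossings lifted to the universal cover — so that the clean identity $\text{(crossing parameter)}=\epsilon_t(\beta_t,\mathbf{a})$ holds on the nose; verifying the regularity of $t(w)$ for a glide, which guarantees that every positive root is activated, is the remaining point to secure.
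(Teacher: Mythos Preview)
Your approach via the alcove model is essentially the paper's: both identify each crossing with a step in the alcove walk, observe that the direction of crossing (your $\epsilon_t$) records which wire is upper, and use that the alcove of $w$ is $t(w)+\mathfrak{A}_{id}$ to identify $\epsilon_t$ with $\mathrm{sign}(\beta_t,t(w))$. The local identity you describe is exactly what the paper establishes in its first paragraph.

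There is one genuine error: the regularity claim ``for a glide the trajectory $t(w)$ is a regular weight'' is false. Take $n=4$ and $w=s_3s_2s_1$, a glide of offset $3$. In its wiring diagram wire $4$ crosses each of $1,2,3$ once while wires $1,2,3$ never cross each other; hence $(\alpha_{12},t(w))=(\alpha_{13},t(w))=(\alpha_{23},t(w))=0$ and $t(w)$ sits on three walls. In general any pair of wires that fails to cross in the reduced word for $w$ produces a root on whose wall $t(w)$ lies.

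The fix is already implicit in your own reduction. You correctly note that the occurring roots are exactly those $\beta\in\Phi^+$ with $(\beta,t(w))\neq 0$, so positivity of all crossing parameters is equivalent to: for every such $\beta$, $\mathrm{sign}(\beta,\mathbf{a})=\mathrm{sign}(\beta,t(w))$. Since $\mathbf{a}$ lies in the \emph{open} chamber $\pi(\mathfrak{C}_{id})$, this says $t(w)$ lies on the $\pi$-side of every wall it does not sit on, which is precisely membership in the \emph{closed} chamber $\pi(\mathfrak{C}_{id})$---and that is how $\mathfrak{C}_{id}$ is defined in the paper (weak inequalities). The paper sidesteps the issue by writing the conclusion directly as $t_{\pi(1)}\geq t_{\pi(2)}\geq\cdots\geq t_{\pi(n)}$ rather than invoking open chambers.
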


\begin{proof}
Consider the alcove walk from the identity to $w$ corresponding to $s_{\ell_1}\cdots s_{\ell_k}$.  Each step corresponds to a crossing in the wiring diagram, say of wires $i$ and $j$ with $i<j$.  One can check that the hyperplane crossed during this step will have the form $(\alpha_{ij},x) = m$ for some $m \in \mathbb{Z}$ and that same $i$ and $j$.  Moreover, the hyperplane is crossed in the positive direction (that is, from the $(\alpha_{ij},x) < m$ side to the $(\alpha_{ij},x) > m$ side) if and only if $i$ is the upper wire.  

Let $t = (t_1,\ldots, t_n) = t(w)$.  First suppose the crossing parameters are all positive.  Fix any pair of distinct wires $\pi(i)$ and $\pi(j)$ with $i < j$.  For the parameter to be positive, it must be the case that $\pi(i)$ is the upper wire of each crossing.  If $\pi(i)<\pi(j)$ then $(\alpha_{\pi(i)\pi(j)},x)$ increases or stays constant along each step of the walk so $(\alpha_{\pi(i)\pi(j)},t) \geq 0$.  If $\pi(i) > \pi(j)$ then $(\alpha_{\pi(j)\pi(i)},x)$ decreases or stays constant at each step so $(\alpha_{\pi(j)\pi(i)},t) \leq 0$.  In either case we have $t_{\pi(i)} \geq t_{\pi(j)}$.  Letting $i$ and $j$ vary yields 
\begin{displaymath}
t_{\pi(1)} \geq t_{\pi(2)} \geq \ldots \geq t_{\pi(n)}
\end{displaymath} 
which exactly means $t \in \pi(\mathfrak C_{id})$.

The converse is obtained by running this same argument backwards.  The only extra point to be made is that $w$ being reduced is needed to ensure that different crossings of the same pair of wires always occur in the same direction.
\end{proof}

\begin{ex}
Suppose $\alpha_2 > \alpha_3 > \alpha_1$, so $\pi(1)=2$, $\pi(2)=3$, and $\pi(3)=1$.  Taking $w=s_1s_2s_0s_2$, one can see from Figure \ref{figTrajectory} that the crossing parameters from left to right are
\begin{displaymath}
\alpha_2-\alpha_1, \alpha_3-\alpha_1, \alpha_2-\alpha_1, \alpha_2-\alpha_3
\end{displaymath}
which are all positive.  We computed in Example \ref{exTrajectory} that $t(w) = (-1,1,0)$.  The dominant chamber consists of vectors $(x_1,\ldots, x_n)$ with $x_1 \geq x_2 \geq \ldots \geq x_n$.  Therefore $\pi^{-1}(t(w)) = (1,0,-1) \in \mathfrak C_{id}$.  This verifies that $t(w) \in \pi(\mathfrak C_{id})$.
\end{ex}

Now we are ready to prove Lemma \ref{lem:pos}.

\begin{proof}
In the setup of affine dKdV, $vu = s_{j_1}\cdots s_{j_m}s_{i_1}\cdots s_{i_l}$ is a reduced word for a glide.  By Proposition \ref{propPositivity}, there is some choice of wire weights $\alpha_1,\ldots, \alpha_n$ such that all crossing parameters in $|vu$ are positive.  More precisely, $t(vu)$ lies in some (or possibly more than one) chamber $\pi(\mathfrak C_{id})$ and we choose wire weights so that $\alpha_{\pi(1)} > \ldots > \alpha_{\pi(n)}$.  

Now, the claim of the current Lemma is that weights can be chosen so that all the crossing parameters in $v|u$ are positive.  However, $|vu$ and $v|u$ are the same networks with the wires numbered in different ways.  As such, the $\alpha_1,\ldots \alpha_n$ above can be permuted so as to work for $v|u$.  
\end{proof}

\section{Topological modes} \label{secTopological}
As we saw in Section \ref{sec:nsol}, a $1$-soliton solution for an instance of affine dKdV is expressed in terms of three parameters, $A,b,c \in \mathbb{R}$, where $A>0$ is arbitrary but $b,c$ satisfy the relation
\begin{displaymath}
(b-\alpha_1)\cdots(b-\alpha_n) = (c-\alpha_1)\cdots(c-\alpha_n)
\end{displaymath}
Adding a certain regularity condition, we get that the pair $(b,c)$ can be encoded by a directed, horizontal line segment in the plane that intersects the graph of 
\begin{displaymath}
y=(t-\alpha_1)\cdots(t-\alpha_n)
\end{displaymath}
at both its endpoints, but nowhere else.  For instance, Figure \ref{fig:sol20} gives an example of the data, which together with choices of $A_1,A_2,A_3$ determine a $3$-soliton.  In this section, we consider the effect on the soliton of the choices of components of the graph on which these segments are drawn.

\begin{figure}[ht]
\scalebox{0.8}{\input{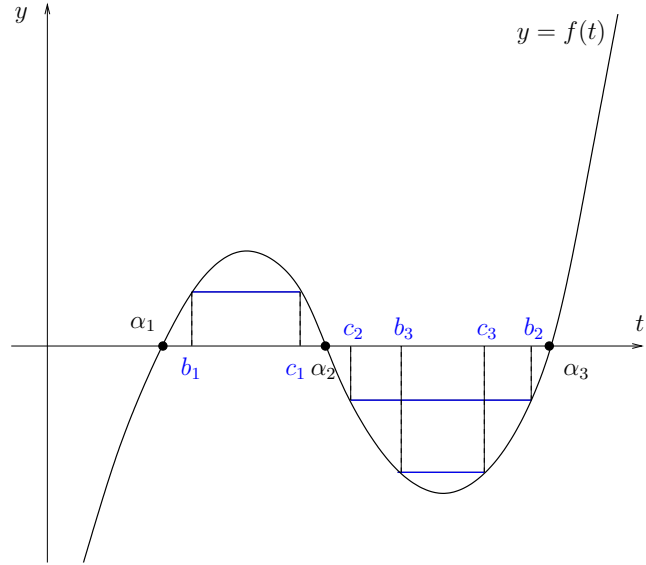}}
    \caption{Graphical description of a $3$-soliton solution.}
    \label{fig:sol20}
\end{figure}

Recall the formula for $1$-soliton solutions
\begin{displaymath}
\tau_{[S]} = 1+AB_1^{\s_1}B_2^{\s_2}\cdots B_n^{\s_n}
\end{displaymath}
where 
\begin{displaymath}
B_j = \frac{b-\alpha_j}{c-\alpha_j}.
\end{displaymath}
We impose the regularity condition $B_j>0$ for all $j$.  In other words, $b$ and $c$ should come from the same connected component of $\mathbb{R} \setminus \{\alpha_1,\ldots, \alpha_n\}$.  If $b=c$ then the $\tau$-function is constant $\tau_{[S]} = 1+A$, so to obtain non-trivial solitons we add the assumption $b \neq c$.

Now to obtain a soliton, we need the cylindricity condition  $f(b)=f(c)$ where 
\begin{displaymath}
f(t)=(t-\alpha_1)\cdots(t-\alpha_n)
\end{displaymath}
The polynomial $f$ is one-to-one on the unbounded components of $\mathbb{R} \setminus \{\alpha_1,\ldots, \alpha_n\}$ while it is generically two-to-one on the bounded components.  Therefore, we must have $\min(\{\alpha_i\}) < b < \max(\{\alpha_i\})$, and for a generic such choice of $b$ there is a unique value for $c$.  We call the component of $\mathbb{R} \setminus \{\alpha_1,\ldots, \alpha_n\}$ from which $b$ and $c$ are drawn the \emph{topological mode} of the soliton.

\subsection{Slope and speed of solitons}
Consider the vector $(\log(B_1),\ldots, \log(B_n))$ and some normal direction $(t_1,\ldots, t_n) \in \mathbb{R}^n$.  Then
\begin{displaymath}
t_1\log(B_1) + \ldots + t_n\log(B_n) = 0
\end{displaymath}
so 
\begin{displaymath}
B_1^{t_1} \cdots B_n^{t_n} = 1.
\end{displaymath}
It follows that $\tau_{[S]} = \tau_{[S]+(t_1,\ldots,t_n)}$ for all $S$.  To observe variation in the $\tau$-function, then, it suffices to travel in the $(\log(B_1),\ldots, \log(B_n))$ direction.  Hence we call this direction the \emph{slope} of the soliton.

For convenience, sort $\alpha_1,\ldots, \alpha_n$ into nondecreasing order, say
\begin{displaymath}
\alpha_{\omega(1)} \leq \alpha_{\omega(2)} \leq \ldots \leq \alpha_{\omega(n)}
\end{displaymath}
with $\omega \in S_n$.  Fix $k \in\{1,2,\ldots, n-1\}$ and consider a soliton with topological mode $(\alpha_{\omega(k)},\alpha_{\omega(k+1)})$.  Assume $b > c$ (otherwise, we can switch $b$ and $c$ which negates $\log(B_i)$ for all $i$ and hence does not affect the slope).  

\begin{lem} \label{lemSlope}
\begin{displaymath}
B_{\omega(k+1)} \leq B_{\omega(k+2)} \leq \ldots \leq B_{\omega(n)} < 1 < B_{\omega(1)} \leq B_{\omega(2)} \leq \ldots \leq B_{\omega(k)}
\end{displaymath}
\end{lem}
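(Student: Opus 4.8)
The plan is to treat each $B_j = (b-\alpha_j)/(c-\alpha_j)$ as the value at $t=\alpha_j$ of the single rational function $B(t) = (b-t)/(c-t)$ and to exploit its monotonicity. First I would record what the topological mode assumption buys us: since $b$ and $c$ both lie in the open interval $(\alpha_{\omega(k)},\alpha_{\omega(k+1)})$ and we have assumed $b>c$, we obtain
\begin{displaymath}
\alpha_{\omega(1)} \leq \cdots \leq \alpha_{\omega(k)} < c < b < \alpha_{\omega(k+1)} \leq \cdots \leq \alpha_{\omega(n)}.
\end{displaymath}

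Next I would establish the central dichotomy $B_{\omega(j)} > 1$ for $j \leq k$ and $B_{\omega(j)} < 1$ for $j > k$ by direct sign analysis. For $j \leq k$ we have $\alpha_{\omega(j)} < c < b$, so both $b-\alpha_{\omega(j)}$ and $c-\alpha_{\omega(j)}$ are positive with $b - \alpha_{\omega(j)} > c - \alpha_{\omega(j)}$, giving $B_{\omega(j)} > 1$. For $j > k$ we have $c < b < \alpha_{\omega(j)}$, so numerator and denominator are both negative, and because $|b-\alpha_{\omega(j)}| < |c-\alpha_{\omega(j)}|$ the ratio is positive and strictly less than $1$.

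For the monotonicity within each block I would compute $B'(t) = (b-c)/(c-t)^2$, which is strictly positive since $b>c$ wherever $t \neq c$. Thus $B$ is strictly increasing on each of the two intervals $(-\infty,c)$ and $(c,\infty)$ comprising its domain. All the $\alpha_{\omega(j)}$ with $j \leq k$ lie in $(-\infty,c)$ and are listed in nondecreasing order, so $B_{\omega(1)} \leq \cdots \leq B_{\omega(k)}$; likewise all the $\alpha_{\omega(j)}$ with $j > k$ lie in $(c,\infty)$ (indeed in $(b,\infty)$) and are nondecreasing, so $B_{\omega(k+1)} \leq \cdots \leq B_{\omega(n)}$. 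Concatenating these two chains with the threshold dichotomy from the previous step yields the asserted string of inequalities, where ties among the $\alpha_j$ account for the non-strict inequalities in the statement.

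There is no genuine obstacle here; the only points requiring care are that the pole of $B$ at $t=c$ separates the two blocks, so one must compare the two groups only via the threshold $1$ and never by a monotonicity argument run across the pole, and the sign bookkeeping in the case $j>k$ where numerator and denominator are both negative. I would state these explicitly to avoid a spurious continuity claim across $t=c$.
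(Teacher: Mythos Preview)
Your proof is correct and follows essentially the same approach as the paper: both analyze the single rational function $t \mapsto (b-t)/(c-t)$, use its monotonicity on each side of the pole $t=c$, and combine this with the ordering of the $\alpha_{\omega(j)}$ relative to $c$. The only cosmetic difference is that the paper invokes the limits $g(t) \to 1$ as $t \to \pm\infty$ to obtain the threshold dichotomy, whereas you carry out a direct sign analysis; both are equally valid.
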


\begin{proof}
Suppose $b > c$ and let
\begin{displaymath}
g(t) = \frac{b-t}{c-t}
\end{displaymath}
which is increasing away from $t=c$ and which approaches $1$ in the limit in both directions.  Now $B_i=g(\alpha_i)$ so
\begin{displaymath}
\alpha_{\omega(1)} \leq \ldots \leq \alpha_{\omega(k)} < c < \alpha_{\omega(k+1)} \leq \ldots \leq \alpha_{\omega(n)}
\end{displaymath}
implies the desired result.
\end{proof}

Now fix $u$, $v$ and $\alpha_1,\ldots, \alpha_n$ as in the setup for affine dKdV (see Table \ref{tabForm2}).  Let $\tilde{v} = \rho^{-k}(v)$ where $k$ is the offset of $v$.  Let $t(u) = (t_1,\ldots, t_n)$ and $t(\tilde{v}) = (t_1',\ldots, t_n')$.  Finally, fix a $1$-soliton with $b>c$ as above.

\begin{prop} \label{propStateDirection}
\begin{displaymath}
t_1\log(B_1) + \ldots + t_n\log(B_n) < 0.
\end{displaymath}
\end{prop}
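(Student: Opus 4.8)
The plan is to pair the chamber membership of $t(u)$ against the sign pattern of the slope vector $(\log B_1,\ldots,\log B_n)$ through an Abel summation, with cylindricity supplying the decisive constraint. First I would record that cylindricity forces $B_1\cdots B_n=1$, since the defining relation $(b-\alpha_1)\cdots(b-\alpha_n)=(c-\alpha_1)\cdots(c-\alpha_n)$ says exactly $f(b)=f(c)$; hence $\sum_{j=1}^n\log B_j=0$. (This identity is also what makes the quantity $\sum_j t_j\log B_j$ well defined, independently of the representative of $t(u)$ modulo $(1,\ldots,1)$.) Then I would arrange the wire weights in decreasing order $\alpha_{\pi(1)}\geq\cdots\geq\alpha_{\pi(n)}$, where $\pi$ is the permutation of Table \ref{tabForm2}. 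Since the wire ansatz makes all crossing parameters of the vacuum $\w$ positive, Proposition \ref{propPositivity} applied to $u$ gives $t(u)\in\pi(\mathfrak C_{id})$, that is $t_{\pi(1)}\geq t_{\pi(2)}\geq\cdots\geq t_{\pi(n)}$.

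The next ingredient is the sign pattern of the logarithms. With $b>c$, Lemma \ref{lemSlope} shows that $B_j<1$ precisely for the weights $\alpha_j$ exceeding $b$ and $B_j>1$ precisely for those lying below $c$. Writing $p$ for the number of weights larger than $b$, this means $\log B_{\pi(j)}<0$ for $j\leq p$ and $\log B_{\pi(j)}>0$ for $j>p$. I would then introduce the partial sums $S_j=\sum_{i=1}^j\log B_{\pi(i)}$, noting $S_0=S_n=0$ by cylindricity. For $j\leq p$ the sum $S_j$ involves only negative terms; for $p<j<n$ one has $S_j=-\sum_{i=j+1}^n\log B_{\pi(i)}$, a negated sum of positive terms. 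In either case $S_j<0$ for every $1\leq j\leq n-1$.

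Finally I would apply Abel summation to the monotone sequence $t_{\pi(1)}\geq\cdots\geq t_{\pi(n)}$, obtaining
\begin{displaymath}
\sum_{j=1}^n t_j\log B_j=\sum_{j=1}^{n-1}\bigl(t_{\pi(j)}-t_{\pi(j+1)}\bigr)S_j,
\end{displaymath}
the boundary terms vanishing because $S_0=S_n=0$. Each difference $t_{\pi(j)}-t_{\pi(j+1)}$ is nonnegative and each $S_j$ is strictly negative, so the sum is at most $0$; it is strictly negative once some consecutive difference is positive. That this occurs follows from $u$ being a nontrivial glide: translation by $t(u)$ sends the fundamental alcove to the alcove $u\neq\mathfrak A_{id}$, so $t(u)$ is not a multiple of $(1,\ldots,1)$ and the coordinates $t_{\pi(j)}$ are not all equal.

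The step I expect to be the main obstacle is establishing $S_j<0$, rather than the bookkeeping of the Abel summation. The sequence $\log B_{\pi(j)}$ is \emph{not} monotone in $j$, since the positive logarithms attached to the small weights sit entirely above the negative logarithms attached to the large weights; a naive rearrangement or Chebyshev-type estimate therefore fails. The resolution is that the cylindricity constraint $S_n=0$ is exactly what pins every intermediate partial sum below zero, and it is this interplay between cylindricity and the chamber inequality that drives the result.
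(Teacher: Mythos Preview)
Your proof is correct and takes essentially the same approach as the paper: both combine the monotonicity of the $t_{\pi(j)}$ from Proposition~\ref{propPositivity}, the sign pattern of the $\log B_j$ from Lemma~\ref{lemSlope}, and the cylindricity constraint $\sum_j \log B_j = 0$, and your Abel summation is just a systematic repackaging of the paper's direct split at the sign change of $\log B_{\omega(i)}$. Your explicit justification of strictness via $u \neq \mathrm{id}$ (hence $t(u)$ not a multiple of $(1,\ldots,1)$) is in fact more careful than the paper, which simply writes the final inequality as strict.
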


\begin{proof}
The left hand side can be rewritten
\begin{displaymath}
t_{\omega(1)} \log(B_{\omega(1)}) + \ldots + t_{\omega(n)}\log(B_{\omega(n)})
\end{displaymath}
with $\omega$ a permutation such that $\alpha_{\omega(1)} \leq \ldots \leq \alpha_{\omega(n)}$.  By Lemma \ref{lemSlope}, there exists a $k \in \{1,2,\ldots, n-1\}$ such that $\log(B_{\omega(i)})$ is positive for $i \leq k$ and negative for $i > k$.  
It is an assumption of the system that the crossing parameters of $v|u$ are positive, so the same is true for $|u$.
It follows from Proposition \ref{propPositivity} that 
\begin{displaymath}
t_{\omega(1)} \leq t_{\omega(2)} \leq \ldots \leq t_{\omega(n)}
\end{displaymath}
(note differing conventions between $\omega$ here and $\pi$ in Proposition \ref{propPositivity} which are related by $\pi(i) = \omega(n+1-i)$).
Lastly, $B_1\cdots B_n=1$ so $\log(B_1) + \ldots + \log(B_n)=0$.

Putting all the above together
\begin{align*}
t_{\omega(1)} &\log(B_{\omega(1)}) + \ldots + t_{\omega(k)}\log(B_{\omega(k)}) \\
&\leq t_{\omega(k)}( \log(B_{\omega(1)}) + \ldots + \log(B_{\omega(k)})) \\
&= t_{\omega(k)}( -\log(B_{\omega(k+1)}) - \ldots - \log(B_{\omega(n)})) \\
&< -t_{\omega(k+1)}\log(B_{\omega(k+1)}) - \ldots - t_{\omega(n)}\log(B_{\omega(n)})
\end{align*}
\end{proof}

\begin{prop} \label{propCarrierDirection}
\begin{displaymath}
t_1'\log(B_1) + \ldots + t_n'\log(B_n) < 0.
\end{displaymath}
\end{prop}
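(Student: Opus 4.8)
The plan is to follow the proof of Proposition~\ref{propStateDirection} almost verbatim, the only genuinely new input being the monotonicity
\[
t_{\omega(1)}' \leq t_{\omega(2)}' \leq \ldots \leq t_{\omega(n)}',
\]
where $\omega \in S_n$ is chosen so that $\alpha_{\omega(1)} \leq \ldots \leq \alpha_{\omega(n)}$. Granting this, the conclusion follows from the identical chain of inequalities: split the sum at the index $k$ supplied by Lemma~\ref{lemSlope}, use that $t_{\omega(i)}'$ is nondecreasing in $i$ while $\log(B_{\omega(i)})$ is positive for $i\leq k$ and negative for $i>k$, and invoke $B_1\cdots B_n=1$, so that $\log(B_1)+\ldots+\log(B_n)=0$. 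Thus the entire problem reduces to establishing the monotonicity of $t(\tilde v)$.

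To obtain it, I would show that the wiring diagram $|\tilde v$ has all positive crossing parameters with respect to the wire weights $\alpha_1,\ldots,\alpha_n$; Proposition~\ref{propPositivity} then places $t(\tilde v)$ in the chamber $\pi(\mathfrak C_{id})$, where $\pi(i)=\omega(n+1-i)$, and this is exactly the displayed monotonicity. The positivity of $|\tilde v$ is where the offset $k_2$ of $v$, and hence the appearance of $\tilde v=\rho^{-k_2}(v)$ rather than $v$ itself, becomes essential. The key observation is that in the diagram for $v|u$ the cut sits to the \emph{right} of $v$, so the $v$-portion of $v|u$ is the diagram $v|$ carrying the identity labeling at its right end, whereas $|\tilde v$ carries the identity labeling at its left end. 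Because $v$ is a glide of offset $k_2$, its boundary permutation is the cyclic shift by $k_2$, so the right-identity labeling of $v|$ and the left-identity labeling of $|v$ differ by this cyclic shift; the very same shift distinguishes the words $v$ and $\tilde v=\rho^{-k_2}(v)$ and is realized geometrically by a rigid rotation of the cylinder. Tracking both effects, one finds that $v|$ and $|\tilde v$ are the \emph{same} labeled crossing diagram---the same crossing wire-pairs with the same upper/lower assignments---and therefore have identical crossing parameters.

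Since the wire ansatz guarantees that all crossings of $v|u$ are positive, in particular the crossings in its $v$-portion $v|$ are positive, and hence so are those of $|\tilde v$. This supplies the hypothesis of Proposition~\ref{propPositivity} and completes the reduction. I expect the main obstacle to be the careful verification that the identification $v|=|\tilde v$ preserves the upper/lower datum at every crossing---including the $s_0$ crossings that live at the back of the cylinder, where the notion of which wire is upper is most delicate---together with the bookkeeping confirming that the rigid rotation through $k_2$ positions exactly cancels the boundary shift induced by the offset of $v$. Once this combinatorial identification is pinned down, the analytic part is a line-by-line repetition of the argument for Proposition~\ref{propStateDirection}.
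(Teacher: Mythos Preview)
Your proposal is correct and follows essentially the same route as the paper's proof: reduce to the monotonicity $t'_{\omega(1)} \leq \ldots \leq t'_{\omega(n)}$, observe that $v|$ and $|\tilde v$ carry the same wire labeling (so the positivity of the $v$-portion of $v|u$ transfers to $|\tilde v$), and apply Proposition~\ref{propPositivity}. The paper compresses your second and third paragraphs into a single sentence---``The definition of $\tilde{v}$ is such that $|\tilde{v}$ and $v|$ have a common numbering of wires''---so your detailed rotation/offset bookkeeping is more explicit than what the paper provides, but the argument is the same.
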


\begin{proof}
Adapting the proof of the previous proposition, it suffices to show $t'_{\omega(1)} \leq \ldots \leq t'_{\omega(n)}$.  Now the crossing parameters in $v|u$ are positive so the same is true of $v|$.  The definition of $\tilde{v}$ is such that $|\tilde{v}$ and $v|$ have a common numbering of wires.  Therefore Proposition \ref{propPositivity} can be applied to $\tilde{v}$ to get the desired result.
\end{proof}

Setting $i=0$ and $m=1$ in Proposition \ref{propTrajectory}, we see that one step of affine dKdV shifts labels in the network model by $-t(\tilde{v})$.  In the case of a $1$-soliton, this can be thought of as advancing by
\begin{displaymath}
-t_1'\log(B_1) - \ldots - t_n'\log(B_n)
\end{displaymath}
along the soliton.  Assuming the soliton is moving left to right, the effect should be the same as keeping time the same ($m=0$) but decreasing the position by some amount ($i = -p$ for some $p>0$).  The effect on labels is a shift by $-pt(u)$, so we advance in the soliton by
\begin{displaymath}
-pt_1\log(B_1) - \ldots - pt_n\log(B_n).
\end{displaymath}
Setting these two equal yields
\begin{displaymath}
p = \frac{t_1'\log(B_1) + \ldots + t_n'\log(B_n)}{t_1\log(B_1) + \ldots + t_n\log(B_n)}.
\end{displaymath}
We call $p$ the \emph{speed} of the soliton.  

\begin{rmk}
The above is simply motivation for the definition of speed.  In reality $p$ is most likely not an integer, so it does not make sense to say the soliton moves by $p$ each time step.  That said, $p$ does reflect the apparent speed as can be approximated by running the system a large number of steps.
\end{rmk}

With speed defined, it is now easy to see that it is in fact always positive.

\begin{prop}
The speed $p$ of any $1$-soliton is positive.  As such, all solitons appear to move from left to right as time increases. 
\end{prop}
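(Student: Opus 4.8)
The plan is to observe that the definition of speed expresses $p$ as a ratio whose numerator and denominator have both just been controlled by the two immediately preceding propositions. Concretely, recall that
\begin{displaymath}
p = \frac{t_1'\log(B_1) + \ldots + t_n'\log(B_n)}{t_1\log(B_1) + \ldots + t_n\log(B_n)}.
\end{displaymath}
The numerator is precisely the quantity appearing in Proposition \ref{propCarrierDirection}, and the denominator is precisely the quantity appearing in Proposition \ref{propStateDirection}. So the entire argument reduces to quoting those two results and taking a quotient of sign-definite numbers.

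First I would invoke Proposition \ref{propStateDirection}, which gives that the denominator satisfies $t_1\log(B_1) + \ldots + t_n\log(B_n) < 0$. Then I would invoke Proposition \ref{propCarrierDirection}, which gives that the numerator satisfies $t_1'\log(B_1) + \ldots + t_n'\log(B_n) < 0$. Since $p$ is the ratio of two strictly negative real numbers, it is strictly positive, and in particular well-defined (the denominator is nonzero). The concluding sentence about solitons appearing to move left to right then follows from the interpretation of $p$ as speed given in the discussion preceding the statement, together with the convention there that the soliton is assumed to move left to right.

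Since all the genuine content has already been extracted in Propositions \ref{propStateDirection} and \ref{propCarrierDirection}, there is no real obstacle remaining at this stage; the only thing to take care of is to confirm that the orientation conventions match, namely that the $b > c$ normalization used in both of those propositions is the same normalization implicit in the definition of $p$. Because replacing $(b,c)$ by $(c,b)$ negates every $\log(B_i)$ simultaneously, it changes the sign of both numerator and denominator and hence leaves $p$ unchanged; so the positivity conclusion is independent of this choice, and the argument is complete.
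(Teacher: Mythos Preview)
Your proposal is correct and matches the paper's own proof essentially verbatim: the paper also cites Propositions~\ref{propStateDirection} and~\ref{propCarrierDirection} to conclude both numerator and denominator are negative when $b>c$, and then observes that swapping $b$ and $c$ negates every $\log(B_i)$ and hence leaves $p$ unchanged.
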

\begin{proof}
If $b>c$ for $b$, $c$ the parameters of the soliton, then Propositions \ref{propStateDirection} and \ref{propCarrierDirection} show
\begin{align*}
t_1\log(B_1) + \ldots + t_n\log(B_n) &< 0 \\
t_1'\log(B_1) + \ldots + t_n'\log(B_n) &< 0 
\end{align*}
It follows that $p > 0$.  As explained above, the case $b<c$ can be obtained by interchanging $b$ and $c$ which has the effect of negating each $\log(B_i)$.  From the formula, the speed remains the same and in particular is still positive.
\end{proof}

The following example illustrates an unusual feature of our systems, namely that there are gaps between the possible speeds that solitons can have.  These jumps are observed by choosing solitons coming from different topological modes.

\begin{ex} \label{exSpeed}
Let $u = s_1s_2s_1s_0$ and $v=s_1s_0$.  Then $t(u) = (-1,0,1)$ and $t(\tilde{v}) = t(s_2s_1) = (0,0,1)$.  Let $\alpha_1=1$, $\alpha_2=3$ and $\alpha_3=4$, so $f(x)=(x-1)(x-3)(x-4)$.  We can choose $b$ from either of the intervals $(1,3)$ or $(3,4)$.  Given $b$, there is a unique $c \neq b$ in the same interval so that $f(b)=f(c)$.  The speed is then
\begin{displaymath}
p = \frac{\log(B_3)}{-\log(B_1) + \log(B_3)}
\end{displaymath}
where $B_j = \frac{b-\alpha_j}{c-\alpha_j}$.  The speed is plotted in Figure \ref{figSpeed}.  The solitons with $1<b<3$ are significantly slower than those with $3<b<4$.  There are no solitons with speed lying in an interval from around $p=0.3$ to $p=1$.
\end{ex}

\begin{figure}
\includegraphics[width=4cm,height=4cm]{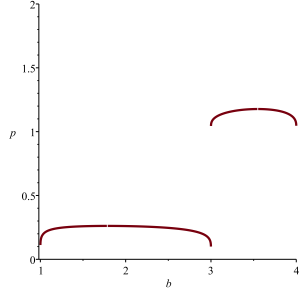}
\caption{The effect of $b$ on the speed of the resulting soliton, with the remaining parameters as given in Example \ref{exSpeed}}
\label{figSpeed}
\end{figure}

\begin{rmk}
Another meaningful statistic of a soliton is its height.  For our systems, solitons can have both positive and negative heights relative to the vacuum.  Figure \ref{figInteraction2} shows a $2$-soliton solution in which the two component solitons have heights of opposite signs.  This $2$-soliton is a solution to the same instance of affine dKdV as the one in Figure \ref{figInteraction1}.
\end{rmk}

\begin{figure}
\vspace{-0.5in}
\includegraphics[width=12cm,height=12cm]{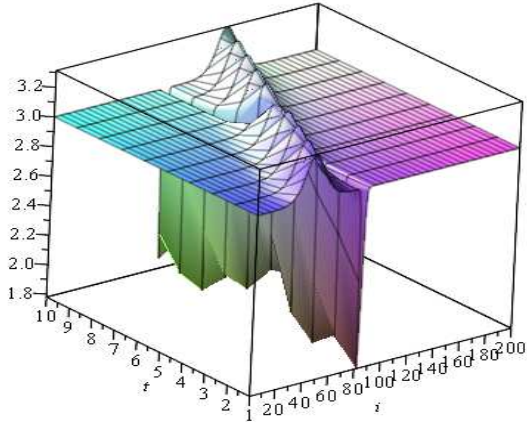}
\vspace{-1.7in}
\caption{A $2$-soliton solution in which the solitons extend in opposite directions}
\label{figInteraction2}
\end{figure}

\subsection{The symmetry}

The $N$-solitons we build are determined by the following parameters:
\begin{itemize}
 \item the roots $\alpha_i, i = 1, \ldots, n$ of the polynomial $f(t)$ -  those are  part of the model rather than merely  part of a solution;
 \item the {{ordered}} pairs $(b_i, c_i), i = 1, \ldots, n$ satisfying $f(b_i) = f(c_i)$;
 \item the parameters $A_i, i=1, \ldots, n$. 
\end{itemize}

It turns out that there is a symmetry allowing us to switch parameters $b_k$ and $c_k$ for some $k$, if we adjust the parameters $A_i$ appropriately. 
Specifically, we claim the following is true. Denote $\tau_{[S]}(b_k \leftrightarrow c_k)$ the value of the $\tau$-function we build if we swap values of parameters $b_k$ and $c_k$, which is allowed since we still have $f(b_k') = f(c_k) = f(b_k) = f(c_k')$.
Denote also $\tau_{[S]}(A_i \leftarrow A_i')$ the tau function after the substitution of the new parameters $A_i'$ instead of the old ones $A_i$.

\begin{prop} \label{prop:symm}
 We have $$\tau_{[S]}(b_k \leftrightarrow c_k) = \frac{A_k}{\prod_j B_{k,j}^{\s_j}} \cdot \tau_{[S]} \left(A_j \leftarrow \frac{A_j}{Z_{k,j}} \text{ for } j \not = k, A_k \leftarrow \frac{1}{A_k} \right).$$
\end{prop}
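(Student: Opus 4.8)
The plan is to exploit a multiplicative factorization of the individual summands of \eqref{eqNSoliton} and then track how each factor transforms, first under the swap $b_k \leftrightarrow c_k$ and then under the stated substitution of the $A_i$. Write $c_T = \prod_{\{i<j\}\subseteq T} Z_{i,j}\prod_{i\in T} f_i$, so that $\tau_{[S]} = \sum_{T\subseteq[N]} c_T$. The first step is to record the elementary fact that $Z_{i,j}=Z_{j,i}$ (two sign flips cancel in numerator and denominator) and to prove the recursion
\begin{displaymath}
c_{T_0\cup\{k\}} = c_{T_0}\, f_k \prod_{j\in T_0} Z_{k,j}
\end{displaymath}
for any $T_0\subseteq[N]\setminus\{k\}$, which amounts to separating out of $c_{T_0\cup\{k\}}$ the single factor $f_k$ together with the pairs $\{k,j\}$ with $j \in T_0$.

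Next I would record how the basic quantities transform. Since $B_{k,j}=(b_k-\alpha_j)/(c_k-\alpha_j)$, the swap sends $B_{k,j}\mapsto 1/B_{k,j}$, hence $f_k\mapsto A_k/\prod_j B_{k,j}^{\s_j}$, while leaving $f_i$ untouched for $i\neq k$; a direct check gives $Z_{k,j}\mapsto 1/Z_{k,j}$ for $j\neq k$, and all $Z_{i,j}$ with $k\notin\{i,j\}$ are unchanged. Writing $P = \prod_j B_{k,j}^{\s_j}$ and splitting the sum for $\tau_{[S]}(b_k\leftrightarrow c_k)$ according to whether $k\in T$, the factorization yields
\begin{displaymath}
\tau_{[S]}(b_k\leftrightarrow c_k) = \sum_{T_0\subseteq[N]\setminus\{k\}} c_{T_0}\Bigl(1 + \tfrac{A_k}{P}\prod_{j\in T_0} Z_{k,j}^{-1}\Bigr),
\end{displaymath}
since the terms with $k\notin T$ are unchanged, while each term with $k\in T$ picks up the factor $(A_k/P)\prod_{j\in T_0}Z_{k,j}^{-1}$.

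For the right-hand side, the point is that the $Z_{i,j}$ depend only on the $b$'s and $c$'s, so they are inert under the substitution $A_j\mapsto A_j/Z_{k,j}$ (for $j\neq k$) and $A_k\mapsto 1/A_k$; only the $f_i$ change, via $f_j\mapsto f_j/Z_{k,j}$ for $j\neq k$ and $f_k\mapsto P/A_k$. Splitting the substituted sum by membership of $k$ and applying the same recursion, the factors $\prod_{j\in T_0}Z_{k,j}^{-1}$ produced by the $j\neq k$ substitutions exactly cancel the $\prod_{j\in T_0} Z_{k,j}$ coming from the recursion in the $k\in T$ terms, giving $\tau_{[S]}(\text{subst}) = \sum_{T_0} c_{T_0}\bigl(\prod_{j\in T_0}Z_{k,j}^{-1} + P/A_k\bigr)$. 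Multiplying by the prefactor $A_k/P = A_k/\prod_j B_{k,j}^{\s_j}$ from the statement then reproduces verbatim the displayed expression for $\tau_{[S]}(b_k\leftrightarrow c_k)$, so the two sides agree. The only genuine bookkeeping hazard is getting this single cancellation right and confirming that the swapped value of $f_k$ equals the prefactor $A_k/P$; once the factored form is in hand, everything else is formal manipulation of the sum over $T$.
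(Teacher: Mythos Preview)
Your proof is correct and follows essentially the same approach as the paper: both record the effect of the swap on $B_{k,j}$, $f_k$, and $Z_{k,j}$ (your $f_k\mapsto A_k/P$ is the paper's $f_k\mapsto A_k^2/f_k$), split the sum over $T$ according to whether $k\in T$, and match terms. The only cosmetic difference is that the paper manipulates the swapped $\tau$-function directly and reindexes once (trading $T\ni k$ for $T\not\ni k$), whereas you reduce both sides separately to a sum over $T_0\subseteq[N]\setminus\{k\}$ via the recursion $c_{T_0\cup\{k\}}=c_{T_0}f_k\prod_{j\in T_0}Z_{k,j}$ and then compare.
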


\begin{cor}
 The $N$-soliton solution obtained by swapping parameters $b_k$ and $c_k$ can also be obtained by keeping those parameters the same, and instead adjusting the $A_i$ parameters via $A_j \leftarrow \frac{A_j}{Z_{k,j}} \text{ for } j \not = k, A_k \leftarrow \frac{1}{A_k}$.
\end{cor}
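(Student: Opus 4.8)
The plan is to prove the identity by directly expanding both sides with the explicit $N$-soliton formula \eqref{eqNSoliton} and matching them term by term, organizing each sum according to whether the index $k$ belongs to the subset $T \subseteq [N]$. First I would record how the three ingredients $f_i$, $B_{i,j}$, $Z_{i,j}$ behave under the swap $b_k \leftrightarrow c_k$. Writing $\beta_k = \prod_j B_{k,j}^{\s_j}$, so that $f_k = A_k\beta_k$, the swap replaces each $B_{k,j} = (b_k-\alpha_j)/(c_k-\alpha_j)$ by its reciprocal, hence sends $f_k \mapsto A_k/\beta_k$, while fixing every $f_i$ with $i \neq k$ since those involve neither $b_k$ nor $c_k$. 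For the coefficients one checks that $Z_{i,j} = Z_{j,i}$ (so the unordered product $\prod_{\{i<j\}\subseteq T}$ is unambiguous), that $Z_{i,j}$ with $i,j \neq k$ is untouched, and that $Z_{k,j} \mapsto 1/Z_{k,j}$ for every $j \neq k$.

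Next I would split the defining sum. Writing $T = T' \cup \{k\}$ in the part where $k \in T$ and applying the rules above, the $k\in T$ coefficient product contributes $\prod_{\{i<j\}\subseteq T'}Z_{i,j}\cdot\prod_{j\in T'}(1/Z_{k,j})$ and the $f$-product contributes $(A_k/\beta_k)\prod_{i\in T'}f_i$. Combining this with the $k\notin T$ part, the left-hand side reduces to
\[
\tau_{[S]}(b_k \leftrightarrow c_k) = \sum_{T\subseteq[N]\setminus\{k\}}\ \prod_{\{i<j\}\subseteq T}Z_{i,j}\prod_{i\in T}f_i \ +\ \frac{A_k}{\beta_k}\sum_{T\subseteq[N]\setminus\{k\}}\ \prod_{\{i<j\}\subseteq T}Z_{i,j}\prod_{i\in T}\frac{f_i}{Z_{k,i}}.
\]

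On the other side I would expand the substituted $\tau$-function: the substitution $A_k \leftarrow 1/A_k$ sends $f_k \mapsto \beta_k/A_k$, and $A_j \leftarrow A_j/Z_{k,j}$ sends $f_j \mapsto f_j/Z_{k,j}$ for $j \neq k$, with all $Z_{i,j}$ unchanged. Splitting again on membership of $k$, in the $k\in T$ terms the factors $\prod_{j\in T'}Z_{k,j}$ from the (unaltered) coefficient product cancel against the denominators of $\prod_{i\in T'}(f_i/Z_{k,i})$, collapsing that part to $(\beta_k/A_k)\sum_{T'}\prod Z_{i,j}\prod f_i$. Multiplying the whole expansion by the prefactor $A_k/\beta_k$ then reproduces exactly the two-term expression displayed above, proving the claim.

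The computation is elementary and self-contained; there is no serious obstacle, only bookkeeping. The one point that genuinely needs care is tracking which $Z_{k,j}$ factors survive in each sub-sum and verifying the cancellation in the $k\in T$ terms of the right-hand side, which is precisely the mechanism that converts the $A_j$-rescalings into the reciprocal-$Z$ weights appearing on the left. Recording the symmetry $Z_{i,j}=Z_{j,i}$ at the outset is what lets one treat the pairings $\{k,j\}$ uniformly in both expansions.
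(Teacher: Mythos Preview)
Your computation is correct, but it proves Proposition~\ref{prop:symm}, not the Corollary. What you have established is the $\tau$-function identity
\[
\tau_{[S]}(b_k \leftrightarrow c_k) \;=\; \frac{A_k}{\beta_k}\,\tau_{[S]}\!\left(A_j \leftarrow \tfrac{A_j}{Z_{k,j}},\ A_k \leftarrow \tfrac{1}{A_k}\right),
\]
and indeed your expansion (splitting on $k\in T$ versus $k\notin T$, then comparing) is essentially the same argument the paper gives for that Proposition, just organized as ``expand both sides and match'' rather than ``manipulate one side into the other.''

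The gap is that the Corollary is a statement about the $N$-soliton \emph{solution} of affine dKdV, meaning the wall parameters $a$ in the network model, not the auxiliary $\tau$-functions. These are recovered from the $\tau$-functions via Theorem~\ref{thmBHZ} as the double ratio
\[
a = (\alpha_i-\alpha_j)\,\frac{\tau_{[S]}\,\tau_{[S]+e_i+e_j}}{\tau_{[S]+e_i}\,\tau_{[S]+e_j}}.
\]
Your prefactor $A_k/\beta_k = A_k/\prod_l B_{k,l}^{\s_l}$ depends on $[S]$, so it is not a priori clear that the two $\tau$-data produce the same wall parameters. The missing step is the one-line check that the four prefactors cancel in this ratio:
\[
\frac{\bigl(A_k/\prod_l B_{k,l}^{\s_l}\bigr)\bigl(A_k/(B_{k,i}B_{k,j}\prod_l B_{k,l}^{\s_l})\bigr)}
     {\bigl(A_k/(B_{k,i}\prod_l B_{k,l}^{\s_l})\bigr)\bigl(A_k/(B_{k,j}\prod_l B_{k,l}^{\s_l})\bigr)} = 1,
\]
which is exactly how the paper passes from the Proposition to the Corollary. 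Add that sentence and the argument is complete.
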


It is easy to argue that the corollary follows from the proposition. Indeed, if $a$ is any vertex parameter in the network model then by Theorem \ref{thmBHZ} and Proposition \ref{prop:symm} we have 
$$a(b_k \leftrightarrow c_k) = (\alpha_i - \alpha_j) \frac{\tau_{[S]}(b_k \leftrightarrow c_k) \tau_{[S]+e_i+e_j}(b_k \leftrightarrow c_k)}{\tau_{[S]+e_i}(b_k \leftrightarrow c_k) \tau_{[S]+e_j}(b_k \leftrightarrow c_k)}=$$
$$= (\alpha_i - \alpha_j) \frac{\tau_{[S]}(A_j \leftarrow \frac{A_j}{Z_{k,j}} \text{ for } j \not = k, A_k \leftarrow \frac{1}{A_k}) \tau_{[S]+e_i+e_j}(A_j \leftarrow \frac{A_j}{Z_{k,j}} \text{ for } j \not = k, A_k \leftarrow \frac{1}{A_k})}
{\tau_{[S]+e_i}(A_j \leftarrow \frac{A_j}{Z_{k,j}} \text{ for } j \not = k, A_k \leftarrow \frac{1}{A_k})) \tau_{[S]+e_j}(A_j \leftarrow \frac{A_j}{Z_{k,j}} \text{ for } j \not = k, A_k \leftarrow \frac{1}{A_k})}$$
$$=a(A_j \leftarrow \frac{A_j}{Z_{k,j}} \text{ for } j \not = k, A_k \leftarrow \frac{1}{A_k})$$
since the extra factors in the numerator and the denominator cancel out: 
$$\frac{\frac{A_k}{\prod_l B_{k,l}^{\s_l}} \cdot \frac{A_k}{B_{k,i} B_{k,j} \prod_l B_{k,l}^{\s_l}}}{\frac{A_k}{B_{k,i} \prod_l B_{k,l}^{\s_l}} \cdot \frac{A_k}{B_{k,j} \prod_l B_{k,l}^{\s_l}}}=1.$$
Thus, it remains to prove the proposition.

\begin{proof}
 
It is easy to see that the swap  $b_k \leftrightarrow c_k$ has the following effect.
$$
B_{i,j}(b_k \leftrightarrow c_k) = 
\begin{cases}
1/B_{i,j} & \text{if $i=k$;}\\
B_{i,j} & \text{otherwise.}
\end{cases}
$$
$$
f_{i}(b_k \leftrightarrow c_k) = 
\begin{cases}
A_i^2/f_{i} & \text{if $i=k$;}\\
f_{i} & \text{otherwise.}
\end{cases}
$$
$$
Z_{i,j}(b_k \leftrightarrow c_k) = 
\begin{cases}
1/Z_{i,j} & \text{if $i=k$ or $j=k$;}\\
Z_{i,j} & \text{otherwise.}
\end{cases}
$$ 
Combining, we get
$$\tau_{[S]}(b_k \leftrightarrow c_k) = \sum_{T \subseteq [N], k \not \in T} \prod_{\{i<j\} \subseteq T}Z_{i,j} \cdot \prod_{i \in T} f_i + \sum_{T \subseteq [N], k \in T} 
\prod_{\{i<j\} \subseteq T, i,j \not = k}Z_{i,j} \cdot \prod_{\{j \not = k\} \subseteq T}Z_{j,k}^{-1} \cdot \frac{A_k^2}{f_k} \prod_{i \in T, i \not = k} f_i$$
$$= \frac{A_k^2}{f_k} \left[  \sum_{T \subseteq [N], k \not \in T} \prod_{\{i<j\} \subseteq T}Z_{i,j} \cdot \frac{1}{A_k^2} \prod_{i \in T \cup \{k\}} f_i + \sum_{T \subseteq [N], k \in T} 
\prod_{\{i<j\} \subseteq T/\{k\}} Z_{i,j} \cdot \prod_{\{j \not = k\} \subseteq T} Z_{j,k}^{-1} \cdot \prod_{i \in T/\{k\}} f_i   \right]$$
$$= \frac{A_k^2}{f_k} \left[  \sum_{T \subseteq [N], k \in T} \prod_{\{i<j\} \subseteq T}Z_{i,j} \cdot \prod_{\{j \not = k\} \subseteq T} Z_{j,k}^{-1} \cdot \frac{1}{A_k^2} \prod_{i \in T} f_i + \sum_{T \subseteq [N], k \not \in T} 
\prod_{\{i<j\} \subseteq T} Z_{i,j} \cdot \prod_{j \in T} Z_{j,k}^{-1} \cdot \prod_{i \in T} f_i   \right]$$ 
$$= \frac{A_k}{\prod_j B_{k,j}^{\s_j}} \cdot \tau_{[S]} \left(A_j \leftarrow \frac{A_j}{Z_{k,j}} \text{ for } j \not = k, A_k \leftarrow \frac{1}{A_k} \right).$$ 
 \end{proof}

\section{Carrier-free formulation} \label{sec:cf}
Recall the time evolution 
\begin{displaymath}
\ldots, \y_0, \y_1, \y_2 \ldots \mapsto \ldots, \y_0', \y_1', \y_2' \ldots
\end{displaymath}
 of affine dKdV is carried out by a sequence of interactions
\begin{displaymath}
(\z_i, \y_i) \mapsto (\y_i', \z_{i+1}) = F(\z_i, \y_i).
\end{displaymath}
with a carrier $\z_i$.  In principle, $\y_i'$ may depend on all the $\y_j$ with $j \leq i$ and also on the initial carrier $\z_{-\infty}$.  Oddly, for most choices $u$ and $v$ it turns out $\y'_i$ depends only on a finite window of $\y_j$ and not at all on $\z_{-\infty}$.  In these cases we obtain a new formulation for affine dKdV where 
the carrier is not needed.

\begin{ex}
Let $n=3$, $u=s_1s_2s_1s_0$, and $v = s_1s_0$.  Let $\y_i = (a_i,b_i,c_i,d_i)$, $\y_i' = (a_i',b_i',c_i',d_i')$ and $\z_i = (e_i,f_i)$.  If $F(\z_i, \y_i) = (\y_i', \z_{i+1})$, then by the formula for $F$ computed in Example \ref{exF} we have
\begin{displaymath}
\begin{array}{llll}
a_i' = \frac{a_if_i}{a_i+e_i} & b_i' = a_i+e_i & c_i' = \frac{e_if_i}{a_i+e_i} & d_i' = b_i \\
e_{i+1} = c_i & f_{i+1} = d_i
\end{array}
\end{displaymath}
This holds for all $i \in \mathbb{Z}$ so $e_i = c_{i-1}$ and $f_i = d_{i-1}$.  Therefore
\begin{displaymath}
\y_i' = \left(\frac{a_id_{i-1}}{a_i+c_{i-1}}, a_i+c_{i-1}, \frac{c_{i-1}d_{i-1}}{a_i+c_{i-1}}, b_i\right).
\end{displaymath}
We have shown $\y_i'$ depends only on $\y_{i-1}$ and $\y_i$.
\end{ex}

On the other hand, there are cases where the carrier cannot be dropped.

\begin{ex}
Let $n=3$, $u=s_1s_2$ (offset $1$), and $v=s_1s_0$ (offset $2$).  Then $F = F_{v,u}: \mathbb{R}^{2+2} \to \mathbb{R}^{2+2}$ is a weighted version of the identity
\begin{displaymath}
vu = s_1s_0s_1s_2 = s_0s_1s_0s_2 = \rho^2(u)\rho^{-1}(v)
\end{displaymath}  
which is realized by a single braid move.  Adding weights yields
\begin{displaymath}
s_1(c)s_0(d)s_1(a)s_2(b) = s_0\left(\frac{ad}{a+c}\right)s_1(a+c)s_1\left(\frac{cd}{a+c}\right)s_2(b).
\end{displaymath}
If we let $\y_i = (a_i,b_i)$, $\y_i'=(a_i',b_i')$ and $\z_i=(c_i,d_i)$ we have
\begin{displaymath}
\begin{array}{ll}
a_i' = \frac{a_id_i}{a_i+c_i} & b_i' = a_i+c_i \\
c_{i+1} = \frac{c_id_i}{a_i+c_i} & d_{i+1} = b_i 
\end{array}
\end{displaymath}
It is always possible to express $\y_i'$ in terms of $\z_{i-k}, \y_{i-k}, \y_{i-k+1}, \ldots, \y_i$.  One can check in this example that these formulas do not stabilize as $k$ increases.  This rules out the possibility of $\y_i'$ depending on only finitely many of the $\y_j$.
\end{ex}

In situations for which $\y_i'$ can be computed directly from $\y_{i-k},\y_{i-k+1}, \ldots, \y_{i}$ for some $k$, we say that the instance of affine dKdV admits a carrier free description.  In this case, we obtain a more general system as we can compute the time evolution
\begin{displaymath}
\ldots, \y_0, \y_1, \y_2, \ldots \mapsto \ldots, \y_0', \y_1', \y_2', \ldots
\end{displaymath}
for arbitrary state sequences, not just ones approaching a fixed limit (the vacuum) in both directions.  Moreover, because each new state is calculated locally, there are computational advantages such as improved numerical stability.  As a carrier free description is easier to work with, it is useful to know under what circumstances one exists.

\begin{thm} \label{thm:cfree}
Let $u$ and $v$ be the state and carrier words respectively for an instance of affine dKdV.  Assume that every pair of wires that crosses in the wiring diagram for $vu$ does so at least once in the $u$ portion of the diagram.  Then the system admits a carrier free description.
\end{thm}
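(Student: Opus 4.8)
The plan is to push everything onto the carrier. Writing $F=(F_1,F_2)$ so that $\y_i'=F_1(\z_i,\y_i)$ and $\z_{i+1}=F_2(\z_i,\y_i)$, it suffices to prove that there is a constant $k$, depending only on $u$ and $v$, such that the carrier $\z_i$ is a function of the bounded window $\y_{i-1},\ldots,\y_{i-k}$ of past states and is independent of the initial carrier $\z_{-\infty}$. Granting this, $\y_i'$ becomes a function of $\y_{i-k},\ldots,\y_i$, which is precisely a carrier free description.

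The first step is to reinterpret the hypothesis combinatorially. On the cylinder a pair of strands may cross several times within $vu$ because of winding, and each individual crossing lies either in the $v$-block or in the $u$-block. Since a pair crossing only in the $u$-block automatically satisfies the hypothesis, the assumption is equivalent to the statement that every pair of strands crossing in the carrier word $v$ also crosses in the state word $u$; writing $P(w)$ for the set of strand pairs that cross in a block $w$, this reads $P(v)\subseteq P(u)$. Because braid and commutation moves preserve the set of crossing pairs and the hypothesis is phrased for a product of glides, relabeling the generators via $\rho$ leaves it intact, so the inclusion $P(\rho^{c}(v))\subseteq P(\rho^{c}(u))$ holds for the rotated copies realizing each individual interaction $(\z_j,\y_j)\mapsto(\y_j',\z_{j+1})$ in the course of a sweep.

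The heart of the argument is a domain-of-dependence analysis of the rational map $\z_{i-1}\mapsto\z_i$ obtained by pushing the carrier through one state. I would fix a crossing $X$ of $\z_i$, say between strands $p$ and $q$, and trace these two strands backward through the braid moves that produced $\z_i$. By the (rotated) inclusion $P(v)\subseteq P(u)$, while the carrier was pushed through the old state $\y_{i-1}$ the pair $\{p,q\}$ was forced to meet a crossing of the same pair inside that state; this meeting is realized by Lusztig moves \eqref{eqLusztig} that rewrite the value at $X$ in terms of the corresponding state crossing together with the incoming edge data on $p$ and $q$. Iterating along the resulting backward cone, which the inclusion confines to boundedly many states, expresses the value at $X$ as a rational function of the crossing parameters of $\y_{i-1},\ldots,\y_{i-k}$ alone. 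The two computations in this section are the two sides of this dichotomy: for $u=s_1s_2s_1s_0$, $v=s_1s_0$ the new carrier is literally the untouched tail of the old state (full refresh, $k=1$), whereas for $u=s_1s_2$, $v=s_1s_0$ the pair crossing in $v$ fails to cross in $u$, the refresh never occurs, and the recursion for the carrier does not close.

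The step I expect to be the main obstacle is making ``refresh'' precise and uniform. One must show that once $\{p,q\}$ recrosses inside a state the new carrier parameter on that pair loses all dependence on the old carrier parameter, rather than merely inheriting a modified copy of it, and that the refreshes for different crossings can be completed with a single bound $k$ despite the relabeling of pairs caused by the rotation $\rho^{-k_1}$ built into $F_2$. Concretely, I would order the crossings of $v$ by the order in which the corresponding pairs recross inside $u$ and induct, checking that the Lusztig rewriting of each carrier parameter feeds in $\z_{i-1}$ only through coordinates that have themselves already been refreshed. Since every map in sight is rational, the desired independence of $\z_{-\infty}$ is an algebraic identity, so it is enough to verify it on the positive wire-ansatz locus, where the chamber-variable formalism of Theorem~\ref{thmBHZ} supplies convenient coordinates, and then extend it by Zariski density.
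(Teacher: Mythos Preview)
Your reduction to showing that $\z_i$ depends only on a bounded window $\y_{i-1},\ldots,\y_{i-k}$ is exactly right, and your reformulation of the hypothesis as $P(v)\subseteq P(u)$ is correct and useful.  The gap is in the ``refresh'' mechanism.  A single Lusztig move $s_i(a)s_j(b)s_i(c)\mapsto s_j(bc/(a+c))s_i(a+c)s_j(ab/(a+c))$ does \emph{not} erase dependence on the incoming carrier parameter: if $a$ belongs to the carrier and $c$ to the state, every output still involves $a$.  So meeting a state crossing of the same pair does not, by itself, refresh anything.  Your proposed induction (``check that each carrier parameter feeds in $\z_{i-1}$ only through coordinates already refreshed'') would have to track how these residual dependences cancel across an unbounded composite of moves, and nothing in the sketch explains why they should.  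The appeal to Zariski density at the end does not help, because the identity you would extend has not been established anywhere.

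The paper sidesteps the move-by-move tracing entirely.  It works in the alcove model: the carrier $\z_0$ sits on the walls crossed by the $v$-part of the path for $vu|$, while the known states $\y_{-r},\ldots,\y_0$ sit on the walls of $w=\rho^{rk_1}(u)\cdots u|$.  The point is then purely Coxeter-theoretic: one shows $(vu)^{-1}\leq w^{-1}$ in weak Bruhat order for $r$ large, equivalently $I((vu)^{-1})\subseteq I(w^{-1})$.  The hypothesis $P(v)\subseteq P(u)$ says exactly that $(vu)^{-1}$ and $u^{-1}$ cross the same families of hyperplanes in the same directions, so the inversion numbers $m_{ij}$ of $(vu)^{-1}$ have the same signs as those $m'_{ij}$ of $u^{-1}$; since $w^{-1}$ has inversion numbers $(r+1)m'_{ij}$, taking $r$ large gives the inclusion.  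Once $(vu)^{-1}\leq w^{-1}$, some reduced word for $w$ ends in $vu$, and the parameters on the $vu$-tail (including $\z_0$) are obtained from the $\y$'s by braid moves---no per-crossing bookkeeping needed.  This global argument is what replaces your local ``refresh'' step.
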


Before we prove the theorem, recall some background. Let $\delta$ be the imaginary root of our affine root system. Then the real positive roots are related to the roots of the finite root system as follows:

$$
\alpha_{ij}^m = 
\begin{cases}
\alpha_{ij} + (m-1) \delta & \text{if $m > 0$;}\\
- \alpha_{ij} - m \delta & \text{if $m < 0$.}\\
\end{cases}
$$

For each $w \in \hat S_n$ the inversion set $I(w)$ is the set of roots corresponding to the hyperplanes separating the alcove of $w$ from the fundamental alcove. The following statement is well-known, see for example \cite[Section 4]{LP2}. 

\begin{lem}
 For a fixed $\alpha_{ij}$, the intersection of $I(w)$ with the set $\{\alpha_{ij}^m\}$ is one of the following:
 \begin{itemize}
  \item empty (say $M=0$);
  \item $\{\alpha_{ij}^m\}_{m=1, \ldots, M}$ for some $M>0$;
  \item $\{\alpha_{ij}^m\}_{m=M, \ldots, -1}$ for some $M<0$.
 \end{itemize}
\end{lem}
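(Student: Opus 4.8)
The plan is to argue geometrically, reading the intersection off the alcove picture rather than manipulating roots algebraically. By definition $I(w)$ is the set of positive real roots whose reflecting hyperplanes separate $\mathfrak A_{id}$ from $w(\mathfrak A_{id})$. I would fix a generic interior point $x_0 \in \mathfrak A_{id}$ and set $x_1 = w(x_0)$, a generic interior point of $w(\mathfrak A_{id})$. Since no reflection hyperplane passes through the interior of an alcove, a hyperplane separates the two alcoves exactly when it separates $x_0$ from $x_1$. For the fixed pair $i<j$, the relevant hyperplanes form the parallel family $H_r : (\alpha_{ij}, x) = r$, $r \in \Z$, and $H_r$ separates $x_0$ from $x_1$ precisely when $r$ lies strictly between $(\alpha_{ij}, x_0)$ and $(\alpha_{ij}, x_1)$. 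Thus the whole problem reduces to understanding which integer levels lie strictly between these two real numbers, which is a one-dimensional question.

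The first key step is to locate the slab containing the fundamental alcove. Because $\alpha_{ij} = \alpha_i + \cdots + \alpha_{j-1}$ is a positive root and $\rho - \alpha_{ij}$ is a (possibly empty) nonnegative sum of simple roots, on any interior point $x_0$ one has $0 < (\alpha_{ij}, x_0) \le (\rho, x_0) < 1$; hence $(\alpha_{ij}, x_0) \in (0,1)$. Writing $c = (\alpha_{ij}, x_1) \in (N, N+1)$ for a unique $N \in \Z$, the set of separating levels is then immediate: it is $\{1, \ldots, N\}$ when $N \ge 1$, empty when $N = 0$, and $\{N+1, \ldots, 0\}$ when $N \le -1$. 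In each case the separating levels form a block of consecutive integers, one endpoint of which is forced to be $0$ or $1$ by the location of $\mathfrak A_{id}$.

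The remaining step is bookkeeping: translate each separating level $r$ into its root index under the convention $r \mapsto \alpha_{ij}^{-r}$ for $r>0$ and $r \mapsto \alpha_{ij}^{1-r}$ for $r \le 0$. For $N \ge 1$ the levels $1, \ldots, N$ give $\alpha_{ij}^{-1}, \ldots, \alpha_{ij}^{-N}$, i.e. $\{\alpha_{ij}^m : M \le m \le -1\}$ with $M = -N$; for $N \le -1$ the levels $0, -1, \ldots, N+1$ give $\alpha_{ij}^{1}, \alpha_{ij}^{2}, \ldots, \alpha_{ij}^{-N}$, i.e. $\{\alpha_{ij}^m : 1 \le m \le M\}$ with $M = -N$; and $N=0$ gives the empty intersection. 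This is exactly the trichotomy claimed. The only place demanding care — and the main (mild) obstacle — is keeping the two indexing conventions straight: the passage between a hyperplane's integer level and the superscript labeling its positive root changes form at the level $0$, so one must check that the boundary levels $r=0,1$ land on the correct roots. The geometric monotonicity underlying the argument is precisely the reducedness fact noted earlier, that a reduced alcove walk never crosses the same hyperplane twice.
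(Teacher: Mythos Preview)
The paper does not actually prove this lemma; it is stated as ``well-known'' with a reference to \cite[Section 4]{LP2} and left without argument. Your geometric proof is correct and self-contained: the key observations that $(\alpha_{ij}, x_0) \in (0,1)$ for any interior point of $\mathfrak A_{id}$, and that the separating hyperplanes in a parallel family are exactly those at integer levels strictly between $(\alpha_{ij}, x_0)$ and $(\alpha_{ij}, x_1)$, immediately force the consecutive-block structure. The bookkeeping translating hyperplane levels $r$ to root superscripts via the paper's convention ($r \mapsto \alpha_{ij}^{-r}$ for $r>0$, $r \mapsto \alpha_{ij}^{1-r}$ for $r \le 0$) is handled correctly, and you are right that this is the only place requiring care. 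So you have supplied a proof where the paper gives none; the argument is standard but cleanly executed.
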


Let $m_{ij}$ be the $M$ value indicated in whichever of the three above cases applies for $\alpha_{ij}$.  This way each element $w \in \hat S_n$ corresponds to a collection of integers $\{m_{ij}\}$, one for each positive root of the finite root system. One can characterize exactly which such collections occur, see \cite[Section 4]{LP2}. More importantly for us, the following characterization of the weak Bruhat order holds, see for example \cite[Theorem 4.2]{LP2}. 

\begin{lem} \label{lemBruhat}
 If $w_1, w_2 \in \hat S_n$, then there is a reduced word for $w_2$ that starts with a reduced word for $w_1$ if and only if $I(w_1) \subseteq I(w_2)$. 
\end{lem}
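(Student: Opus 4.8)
The plan is to recognize this as the standard characterization of the weak Bruhat order on the Coxeter group $\hat{S}_n$ in terms of inversion sets, and to prove the two implications separately, the reverse one by induction on $\ell(w_1)$. Throughout I would use the standard enumeration of the inversion set along a reduced word: if $w = s_{a_1}\cdots s_{a_p}$ is reduced, then $I(w) = \{\beta_1,\ldots,\beta_p\}$ where $\beta_j = s_{a_1}\cdots s_{a_{j-1}}(\alpha_{a_j})$, these being exactly the real positive roots whose hyperplanes are crossed, in order, by the minimal alcove walk from $\mathfrak{A}_{id}$ to $w\mathfrak{A}_{id}$. The $\beta_j$ are distinct precisely because the word is reduced (the walk crosses each separating hyperplane once). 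This fact, together with the equivalence $\alpha_i \in I(w) \iff \ell(s_i w) = \ell(w)-1$ and the recursion described below, is standard Coxeter theory and can be cited from \cite{LP2}.

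The forward implication is immediate from this enumeration. Suppose $w_2$ has a reduced word $s_{a_1}\cdots s_{a_p}$ whose length-$k$ prefix $s_{a_1}\cdots s_{a_k}$ is a reduced word for $w_1$. Then $I(w_1) = \{\beta_1,\ldots,\beta_k\}$ and $I(w_2) = \{\beta_1,\ldots,\beta_p\}$ with the same roots $\beta_j$ appearing in both lists, so $I(w_1)\subseteq I(w_2)$.

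For the reverse implication I would induct on $\ell(w_1)$, the base case $w_1 = e$ being trivial. Assume $I(w_1)\subseteq I(w_2)$ with $\ell(w_1)\ge 1$, and pick a simple reflection $s_i$ that begins some reduced word for $w_1$, so $\ell(s_i w_1) = \ell(w_1)-1$ and hence $\alpha_i \in I(w_1)\subseteq I(w_2)$. Since $\alpha_i\in I(w_2)$, also $\ell(s_i w_2)=\ell(w_2)-1$. The key structural input is the disjoint decomposition $I(w) = \{\alpha_i\}\sqcup s_i(I(s_i w))$, valid whenever $\ell(s_i w) = \ell(w)-1$, which follows by prepending $s_i$ to a reduced word for $s_i w$ and applying the enumeration above; here $\alpha_i \notin s_i(I(s_i w))$ because $\alpha_i \notin I(s_i w)$. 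Applying this to $w_1$ and to $w_2$ and using that $s_i$ is a bijection on roots, the inclusion $I(w_1)\subseteq I(w_2)$ with the common element $\alpha_i$ removed yields $I(s_i w_1)\subseteq I(s_i w_2)$. By the inductive hypothesis $s_i w_1$ is a prefix of some reduced word for $s_i w_2$, i.e.\ $s_i w_2 = (s_i w_1)z$ with $\ell(s_i w_2) = \ell(s_i w_1)+\ell(z)$; left-multiplying by $s_i$ gives $w_2 = w_1 z$, and a length count gives $\ell(w_2)=\ell(w_1)+\ell(z)$, so $w_1$ is a prefix of a reduced word for $w_2$, as desired.

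The main obstacle lies entirely in the reverse direction, and it is concentrated in two standard but essential facts: the equivalence $\alpha_i\in I(w)\iff \ell(s_iw)<\ell(w)$ (needed to transfer the ``first letter'' $s_i$ from $w_1$ to $w_2$) and the recursion $I(w)=\{\alpha_i\}\sqcup s_i(I(s_iw))$ (needed to run the descent). Once these are in hand the argument is a clean induction on length; the only point requiring care is the disjointness $\alpha_i\notin s_i(I(s_iw))$, which holds because $s_i$ sends every positive root other than $\alpha_i$ to a positive root and $\alpha_i\notin I(s_iw)$.
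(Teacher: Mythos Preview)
Your proof is correct and follows the standard route for this characterization of the weak order on a Coxeter group. Note, however, that the paper does not actually prove this lemma: it simply states the result and cites \cite[Theorem 4.2]{LP2}. So there is no in-paper argument to compare against. Your induction on $\ell(w_1)$, using the enumeration $I(w)=\{\beta_1,\ldots,\beta_p\}$ along a reduced word together with the recursion $I(w)=\{\alpha_i\}\sqcup s_i(I(s_iw))$ when $\ell(s_iw)<\ell(w)$, is exactly how this result is established in general and would serve perfectly well as a self-contained replacement for the citation.
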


Finally, the following easy lemma relates inversion sets and crossing wires, see \cite[Section 4.2]{LP2}.

\begin{lem}
 The two wires $i<j$ cross in a reduced word for $w$ if and only if $m_{ij} \not = 0$. The wire $j$ crosses wire $i$ from above if $m_{ij}>0$ and from below if $m_{ij}<0$.
\end{lem}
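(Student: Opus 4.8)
The plan is to pass through the alcove model and read the statement off the inversion set $I(w)$. The bridge is the identification, used already in the proof of Proposition \ref{propPositivity}, between the wiring diagram for a reduced word $w=s_{\ell_1}\cdots s_{\ell_k}$ and the alcove walk from $\mathfrak{A}_{id}$ to the alcove of $w$: each letter is simultaneously one crossing of the diagram and one crossing of a reflection hyperplane, and a crossing of the two wires labelled $i<j$ is precisely a crossing of a hyperplane $(\alpha_{ij},x)=m'$ from the family attached to that same pair $(i,j)$. Since $w$ is reduced, the walk meets each hyperplane at most once and the hyperplanes it meets are exactly the roots of $I(w)$. Thus the crossings of the pair $(i,j)$ in the diagram are in bijection with $I(w)\cap\{\alpha_{ij}^m\}$.

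With this bijection in hand the equivalence is immediate: the wires $i<j$ cross at least once if and only if $I(w)\cap\{\alpha_{ij}^m\}$ is nonempty, and by the preceding lemma together with the definition of $m_{ij}$ as its $M$-value, nonemptiness is exactly $m_{ij}\neq 0$. Reducedness is what guarantees, via the contiguous-block shape of $I(w)\cap\{\alpha_{ij}^m\}$, that every crossing of this pair occurs in the same direction, so that the second assertion is even well posed.

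For the direction I would track the sign carefully. By the preceding lemma the nonempty intersection is a block of positive superscripts when $m_{ij}>0$ and of negative superscripts when $m_{ij}<0$. Converting superscripts to hyperplane levels through the indexing recalled above — a level $m'>0$ hyperplane carries the root $\alpha_{ij}^{-m'}$ and a level $m'\le 0$ one carries $\alpha_{ij}^{1-m'}$ — positive superscripts correspond to levels $m'\le 0$ and negative superscripts to levels $m'\ge 1$. The fundamental alcove satisfies $0\le(\alpha_{ij},x)\le 1$, so the walk must cross a level-$(\le 0)$ hyperplane in the direction of decreasing $(\alpha_{ij},x)$ and a level-$(\ge 1)$ hyperplane in the direction of increasing $(\alpha_{ij},x)$. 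Finally the proof of Proposition \ref{propPositivity} matches the increasing (positive) direction with $i$ being the upper wire; hence $m_{ij}>0$ forces $j$ to be the upper wire, i.e.\ wire $j$ crosses wire $i$ from above, while $m_{ij}<0$ makes $i$ upper, i.e.\ wire $j$ crosses from below, as stated.

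The existence half is essentially automatic once the diagram-to-walk dictionary is set up; the only place demanding attention is this last paragraph, where three separate conventions — the indexing $\alpha_{ij}^m$ versus the geometric level $m'$, the orientation of the alcove walk relative to $\mathfrak{A}_{id}$, and the meaning of upper wire — must be lined up so that the signs come out matching the statement rather than its opposite. That bookkeeping is the only real obstacle.
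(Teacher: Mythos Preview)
Your argument is correct. The paper does not actually prove this lemma; it simply states it as an ``easy'' fact and refers the reader to \cite[Section 4.2]{LP2}. Your proof supplies the details that the paper omits, and does so using only ingredients already present in the paper: the crossing-to-hyperplane dictionary asserted in the proof of Proposition~\ref{propPositivity}, the structural description of $I(w)\cap\{\alpha_{ij}^m\}$ from the immediately preceding lemma, and the orientation convention that the positive direction corresponds to $i$ being the upper wire. The sign-chase in your last paragraph is precisely the delicate step, and you have it right: positive superscripts $\leftrightarrow$ levels $m'\le 0$ $\leftrightarrow$ decreasing $(\alpha_{ij},x)$ $\leftrightarrow$ $j$ upper, and dually for negative superscripts.
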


We are ready to prove Theorem \ref{thm:cfree}.

\begin{proof}
Let $\ldots, \y_0, \y_1, \y_2, \ldots$ be a state sequence and let $\ldots, \y_0', \y_1', \y_2', \ldots$ be the result of affine dKdV.  It suffices to provide a carrier free description of $\y_0'$, that is, a formula for $\y_0'$ in terms of $\y_{-r},\y_{-r+1}, \ldots, \y_0$ for some $r \geq 0$.  Consider the last $r$ interactions prior to the one that produces $\y_0'$.  Before the interactions, the relevant part of the infinite reduced word is 
\begin{displaymath}
\rho^{rk_1}(v)\rho^{rk_1}(u)\rho^{(r-1)k_1}(u)\cdots u
\end{displaymath}
with corresponding weights
\begin{displaymath}
\z_{-r}, \y_{-r}, \y_{-r+1}, \ldots, \y_0.
\end{displaymath}
After $r$ interactions, the reduced word has become
\begin{displaymath}
\rho^{rk_1+k_2}(u)\rho^{(r-1)k_1+k_2}(u) \cdots \rho^{k_1+k_2}(u)vu
\end{displaymath}
and the weights are 
\begin{displaymath}
\y_{-r}', \y_{-r+1}', \ldots, \y_{-1}', \z_0, \y_0.
\end{displaymath}
The next interaction produces $\y_0'$ in terms of $\z_0$ and $\y_0$, so it suffices to calculate $\z_0$ in terms of $\y_{-r},\ldots, \y_{-1}$.

Now, put a cut at the end of each of the above reduced words so that they can be viewed as alcove paths ending at the fundamental alcove.  Then all the state and carrier variables involved in these interactions naturally live at the walls crossed by these paths.  Let $w = \rho^{rk_1}(u)\rho^{(r-1)k_1}(u)\cdots u|$, namely the part of the initial word with weights $\y_{-r},\y_{-r+1}\ldots, \y_{0}$.  Taking these as given and applying braid moves, it is possible to calculate the parameter on any wall crossed by the alcove path corresponding to any other reduced word for $w$.  Meanwhile, the target $\z_0$ lies on the walls crossed by the $v$ part of the alcove path for $vu|$.  It remains to show that there is a reduced word for $w$ that ends in $vu$.  This is equivalent to saying that $(vu)^{-1}$ is less than $w^{-1}$ in the weak Bruhat order.

 The fact that the same wires cross in $vu$ as in $u$ means that $vu$ and $u$ cross the same families of hyperplanes and in the same directions. Flipping the wiring diagram backwards, we obtain the same conclusion for $(vu)^{-1}$ and $u^{-1}$.  In other words, if $\{m_{ij}\}$ and $\{m'_{ij}\}$ are the inversion numbers for $(vu)^{-1}$ and $u^{-1}$ respectively then $m_{ij}>0$
 implies $m'_{ij}>0$ and $m_{ij}<0$ implies $m'_{ij}<0$.  Each of the $r+1$ pieces of $w^{-1} = u^{-1}\rho^{k_1}(u^{-1})\cdots\rho^{rk_1}(u^{-1})$ consists of the same multiset of crossings as $u^{-1}$ does.  As such, the inversion numbers for $w^{-1}$ are $\{(r+1)m'_{ij}\}$.  Picking $r$ big enough we obtain that $|(r+1)m'_{ij}| \geq |m_{ij}|$ for all $i<j$.  It follows that $I(w^{-1}) \supseteq I((vu)^{-1})$, which combined with Lemma \ref{lemBruhat} yields the desired Bruhat relation.
\end{proof}


\medskip

\textbf{Acknowledgments.} We thank Jonathan Nimmo for suggesting a number of useful references and Thomas Lam for reading through a previous draft of this paper and providing comments.

\bibliographystyle{plain}
\bibliography{affine}{}
\end{document}